\newcommand{\review}[2][\right]{\relax
\ifx#1\right\relax \left.\fi#2#1\rvert}
\let\abs=\envert
\newtheorem{thm}{Theorem}[section]
\newtheorem{rem}[thm]{Remark}
\newtheorem{cor}[thm]{Corollary}
\newtheorem{lemma}[thm]{Lemma}
 \newtheorem{definition}[thm]{Definition}
\newcommand{\bef}{\begin{flushright}}
\newcommand{\eef}{\end{flushright}}
\newcommand{\eval}[2][\right]{\relax
\ifx#1\right\relax \left.\fi#2#1\rvert}
\let\abs=\envert
\numberwithin{equation}{section}
\newcommand{\R}{\mathbb{R}}
\renewcommand{\div}{\mbox{div}\,}
\newcommand{\p}{\partial}
\newcommand\e{\varepsilon}  
\newcommand{\h}{\hspace*{.24in}}
\def\h{\hspace*{.24in}}
\def\beq{\begin{eqnarray*}}
\def\eeq{\end{eqnarray*}}
\def\RR{\mbox{$I\hspace{-.06in}R$}}
\newenvironment{myindentpar}[1]%
{\begin{list}{}%
         {\setlength{\leftmargin}{#1}}%
         \item[]%
}
{\end{list}}
\begin{document}

\title[Second Variations for Allen-Cahn Energies and Neumann Eigenvalues] {Asymptotic Behavior of Allen-Cahn Type Energies and Neumann Eigenvalues via Inner Variations}
\author{Nam Q. Le}
\address{Department of Mathematics, Indiana University, Bloomington, 831 E 3rd St,
Bloomington, IN 47405, USA. }
\email {nqle@indiana.edu $\qquad$ sternber@indiana.edu}
\author{Peter J. Sternberg}
\thanks{The research of the first author was supported in part by NSF grants DMS-1500400 and DMS-1764248.  The second author was supported by NSF grant DMS-1362879}
\subjclass[2000]{49R05, 49J45, 58E30, 49K20, 58E12}
\keywords{Allen-Cahn functional, Ohta-Kawasaki functional,  inner variations, sharp interface limit, stable hypersurface, Neumann eigenvalue problem}

\maketitle

\begin{abstract}
We use the notion of first and second inner variations as a bridge allowing one to pass to the limit of first and second Gateaux variations for the  Allen-Cahn, Cahn-Hilliard and Ohta-Kawasaki energies. Under suitable assumptions, this allows us to show that stability passes to the sharp interface limit, including boundary terms, by considering non-compactly supported velocity and acceleration fields in our variations. This complements the results of 
Tonegawa, and Tonegawa and Wickramasekera, where interior stability is shown to pass to the limit. As a further application,
we prove an asymptotic upper bound on the $k^{th}$ Neumann eigenvalue of the linearization of the Allen-Cahn operator, relating it to the $k^{th}$ Robin eigenvalue of the Jacobi operator, taken with respect to the minimal surface arising as the asymptotic location of the zero set of the Allen-Cahn critical points. We also prove analogous results for eigenvalues of the linearized operators arising in the Cahn-Hilliard and Ohta-Kawasaki settings. 
These complement the earlier result of the first author where such an asymptotic upper bound is achieved for Dirichlet eigenvalues for the linearized Allen-Cahn operator. Our asymptotic upper bound on Allen-Cahn Neumann eigenvalues extends, in one direction, the asymptotic {\it equivalence} of these eigenvalues established  in the work of Kowalczyk in the two-dimensional case where the minimal surface is a line segment and specific Allen-Cahn critical points are suitably constructed.
\end{abstract}
\pagenumbering{arabic}

\section{Introduction and Statements of the Main Results}  \setcounter{equation}{0}  

Within the calculus of variations, the second variation is of course a powerful tool in analyzing the nature of critical points. This is in particular the case in the context of energetic models involving double-well potentials perturbed by a gradient penalty term such as the Allen-Cahn or Modica-Mortola, Cahn-Hilliard and Ohta-Kawasaki functionals \cite{AC,OK}. As the scale of interfacial energy approaches zero, these energy functionals all converge, in the sense of  $\Gamma$-convergence, to a variety of sharp interface models and there are many studies of critical points associated with these energies or with their $\Gamma$-limits for which the second variation plays a crucial role. Taking a limit of the second variations themselves to obtain the second variation of the $\Gamma$-limit, however, can be problematic and the results in this direction are far fewer. Here, building on the techniques and results found in \cite{Le, Le2}, we carry out this limiting process using the notion of inner variation, to be defined precisely in Section \ref{Var_sec}. The inner variation provides a bridge between the second variations of the so-called diffuse models listed above and those of the sharp interface variational problems arising as their $\Gamma$-limits which tend to involve minimal or constant mean curvature hypersurfaces.  For more on $\Gamma$-convergence, we refer to \cite{Braides} or \cite{DM}. Its definition for the Allen-Cahn functional will be briefly recalled in Section \ref{AC_sec}. 

In \cite{Le, Le2}, the first author passes to the limit in second variations of various energies including the Allen-Cahn functional 
\begin{equation}
E_{\varepsilon}(u):=\int_{\Omega}\left(\frac{\varepsilon \abs{\nabla u}^2}{2} +\frac{(1-u^2)^2}{2\varepsilon}\right) dx,\quad u:\Omega\to\R,\;\Omega\subset\R^N \;(N\geq 2),\label{ACintro}
\end{equation}
in the context of critical points $u_\e$, that is $u_\e$ satisfying $-\e\Delta u_\e + 2\e^{-1} (u_\e^3-u_\e)=0$ in $\Omega$, subject to Dirichlet boundary conditions. This leads, in particular, to an asymptotic upper bound on the Dirichlet eigenvalues, namely
\begin{equation}
\limsup_{\e\to 0}\frac{\lambda_{\e,k}}{\e}\leq \lambda_k\quad\mbox{for}\;k=1,2,\ldots\label{Devals}
\end{equation}
where $\lambda_{\e,k}$ denotes the $k^{th}$ Dirichlet eigenvalue of the linearized Allen-Cahn operator 
\[-\e\Delta+\frac{2}{\e}(3u_\e^2-1),
\] subject to zero boundary conditions on $\partial\Omega$ and $\lambda_k$ denotes the $k^{th}$ eigenvalue of the Jacobi operator $-\Delta_{\Gamma}-\abs{A_\Gamma}^2$ associated with a minimal surface $\Gamma$ subject to zero boundary conditions on $\partial\Gamma$. Here $\Gamma$ denotes the asymptotic location of the interfacial layer bridging $\{u_\e\approx 1\}$ and $\{u_\e\approx -1\}$ and $A_\Gamma$ denotes the associated second fundamental form.
This particular result in \cite{Le2} (see Corollary 1.1 there) has been recently extended to the closed Riemannian setting in \cite{Ga} by Gaspar who also relaxed  the multiplicity 1 assumption in \cite{Le2}; see also Hiesmayr \cite{Hi}
for related results. 
Related to such results on the Dirichlet problem is the elegant work in \cite{Tonegawa,TW}, where the authors show within the context of varifolds that when stable critical points of the Allen-Cahn functional converge to a limit, the 
limiting interface is stable with respect to interior perturbations; moreover, the limiting interface is smooth in dimensions $N\leq 7$ while its singular set (if any) has Hausdorff dimension at most $N-8$ in dimensions
$N>7$. We would like to emphasize that the convergence and regularity results in \cite{Tonegawa, TW} rely on an important  interior convergence result for the Allen-Cahn equation from the work of Hutchinson-Tonegawa \cite{HT}
and a deep interior regularity theory for stable codimension 1 integral varifolds  from the work of  Wickramasekera \cite{W}. At present, to the best of our knowledge, there are no boundary analogues for the above results.

In this article we extend the techniques of \cite{Le, Le2} in three directions: we allow for a mass constraint so as to cover not just the Allen-Cahn context but also Cahn-Hilliard, we allow for perturbation by a nonlocal term as arises in the Ohta-Kawasaki functional, \eqref{OKintro}, and most crucially, we consider non-compactly supported variations of domain in taking inner variations, allowing us to capture boundary effects in passing to the limit in the case of  
Neumann boundary conditions in all of these problems.

 Regarding this last extension, we point out that the ``natural'' Neumann boundary conditions satisfied by critical points in all of these models are {\it not} the boundary conditions associated with the limit. Rather, for example, in the case of Allen-Cahn energy, the analogue of the result \eqref{Devals} from \cite{Le2} is that \eqref{Devals} holds for $\lambda_{\e,k}$ associated with homogeneous Neumann boundary conditions but for
$\lambda_k$ associated with Robin boundary conditions, cf. \eqref{Robinintro}. For two-dimensional Allen-Cahn, this shift from Neumann for $\e>0$ to Robin in the limit is examined in detail by Kowalczyk in \cite{K} where it is shown that $$\lim_{\e\to 0}\frac{\lambda_{\e,k}}{\e}= \lambda_k$$ for a carefully constructed sequence of Neumann critical points $\{u_\e\}$ and so for that problem our results represent a one-sided generalization to a more general class of critical points and to arbitrary dimensions.

In the next section we will give a precise definition of first and second inner variations while reviewing the more standard notion of first and second Gateaux variations. Roughy speaking, though, the difficulty in transitioning from the second Gateaux variation $d^2 E_{\e}(u_\e,\varphi)$ of a functional like $E_\e$ in \eqref{ACintro} to that of its limit, say $E(\Gamma)$, which is essentially area or $(N-1)$-dimensional Hausdorff measure $\mathcal{H}^{N-1}(\Gamma)$, is that the former is computed by  taking the second $t$-derivative of $E_{\e} (u_\e+t\varphi)$ evaluated at $t=0$
where $\varphi$ is a scalar function, while the latter comes from taking the second $t$-derivative of $\mathcal{H}^{N-1}\big(\Phi_t (\Gamma)\big)$ evaluated at $t=0$ where  $\Phi_t$ is a deformation of the identity map of the form
\[
\Phi_t(x)\sim x+t\eta(x)+\frac{t^2}{2} \zeta(x)
\] for some velocity and acceleration vector fields $\eta$ and $\zeta$ mapping $\R^N\to\R^N$. A successful  passage from one of these variations to the other, however, should be computed by similar methods. Bridging these two disparate notions is the inner variation.
Indeed, if we view $\Gamma$ as the asymptotic location of the zero level set of $u_\e$, and if we view $\Phi_t$ as a deformation not just of $\Gamma$ but of all points in $\R^N$, then $\Phi_t(\Gamma)$ corresponds to the limit of 
the zero level of $u_\e(\Phi^{-1}_t(x))$. Thus, we might be led to compute the  first and second $t$-derivatives of $E_\e(u_\e(\Phi^{-1}_t(x)))$, and these are precisely the inner variations. Then relating these quantities to the more standard first and second Gateaux variations becomes one of our first tasks. 

Differently put, inner variation allows us to more directly compare the energy landscapes of diffuse models and their sharp interface limits. In the present paper we carry out this explicit bridging for the Allen-Cahn functional
as well as its nonlocal counterpart, the Ohta-Kawasaki functional, where the limiting object is a hypersurface, but we would like to point out that examples of this bridging via inner variations already exists in the literature, especially in the Ginzburg-Landau setting, where limiting objects are instead finite sets of points in planar domains, namely Ginzburg-Landau vortices. This includes Serfaty's stability analysis in \cite{Serfaty}, as well as  \cite{SS1} (see also \cite{Serfaty2}), where Sandier and Serfaty 
introduce a powerful $\Gamma$-convergence of gradient flows scheme
in which they identify certain energetic conditions between the $\Gamma$-converging functionals and their $\Gamma$-limits that guarantee convergence of their corresponding gradient flows. When applied to Ginzburg-Landau vortices which lie in the interior of the planar domain sample, 
 the verification of one of the two key sufficient conditions is done by a constructive argument using inner variations with compactly supported vector fields; see \cite[equation (3.27)]{SS1}. For boundary vortices in thin magnetic films, this verification is carried out by Kurzke \cite{Kurzke} using inner variations with non-compactly supported vector fields; see \cite[Theorem 6.1]{Kurzke}.

Along with giving the definitions of first and second inner variations, and reviewing the definitions of Gateaux variations, establishing this relationship between the two notions of variation is the content of Section \ref{Var_sec}. In Section 3 we pass to the limit
in the inner variations of the Allen-Cahn functional; see Theorem \ref{thm-AC2}. The proof relies crucially on a convergence result of Reshetnyak \cite{Res} stated in a convenient form from Spector \cite{Sp} in Theorem \ref{Sp_thm}. In Section 4 we present two applications of Theorem \ref{thm-AC2}. The first, Theorem \ref{ACstab},
shows that under suitable regularity hypotheses on the limiting interface, stability of Allen-Cahn critical points passes to the limit. Thus, in the limit we recover the second variation formula including boundary terms derived in \cite{SZ2}. The second is the previously alluded to generalization of \eqref{Devals} to the Neumann setting which we state here as our first main result:
\begin{thm}[Upper semicontinuity of the Allen-Cahn Neumann eigenvalues]
\label{eigen_thm}
 Let $\Omega$ be an open smooth bounded domain in $\RR^{N}$ ($N\geq 2$).
Let $\{u_{\e}\}\subset C^3 (\overline{\Omega})$ be a sequence of critical points of the Allen-Cahn functional \eqref{ACintro}
that converges in $L^1(\Omega)$ to a function $u_0\in BV (\Omega, \{1, -1\})$ with an interface $\Gamma:=\partial\{u_0=1\}\cap\Omega$ having the property that $\overline{\Gamma}$ is $C^2$. 
Assume that
 $\lim_{\varepsilon \rightarrow 0} E_{\varepsilon}(u_{\varepsilon}) = 
\frac{4}{3}\mathcal{H}^{N-1}(\Gamma)$, and assume
that $\Gamma$ is connected. Let $\lambda_{\e, k}$ be the $k^{th}$ eigenvalue of the operator $-\e \Delta + 2\e^{-1}(3u_{\e}^2-1)$
 in $\Omega$ with zero Neumann condition on $\partial\Omega$. Let 
$\lambda_k$ and $\varphi^{(k)}:\overline{\Gamma}\to\R$ be the $k^{th}$ eigenvalue and eigenfunction of the operator $-\Delta_{\Gamma} - |A_{\Gamma}|^2$ in $\Gamma$ subject to Robin boundary conditions on $\partial\Gamma \cap\partial\Omega$, namely
\begin{equation}
 \label{Robinintro}
 \left\{
 \begin{alignedat}{2}
  (-\Delta_{\Gamma} - |A_{\Gamma}|^2)\varphi^{(k)} &=\lambda_k \varphi^{(k)}~&&\text{in} ~\Gamma, \\\
 \frac{\partial\varphi^{(k)}}{\p  {\bf n}}+A_{\partial\Omega}({\bf n}, {\bf n}) \varphi^{(k)} &=0\h~&&\text{on}~\partial\Gamma\cap\partial\Omega.
 \end{alignedat}
 \right.
\end{equation}
Here ${\bf n} = (n_{1},\cdots,n_{N})$ denotes the unit normal to $\Gamma$ pointing out of the region $\{x\in\Omega:\,u_0(x)=1\}$ and $A_{\Gamma}$ and $A_{\p\Omega}$ denote the second fundamental forms of $\Gamma$ and $\p\Omega$, respectively. Then
\[
\limsup_{\e\rightarrow 0}\frac{\lambda_{\e, k}}{\e}\leq \lambda_k.
\]
\end{thm}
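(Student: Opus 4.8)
The plan is to use the variational (min-max) characterization of eigenvalues together with the passage-to-the-limit result for inner variations, Theorem \ref{thm-AC2}. Concretely, the $k^{th}$ Neumann eigenvalue $\lambda_{\e,k}$ of the linearized operator $-\e\Delta+2\e^{-1}(3u_\e^2-1)$ admits the Courant--Fischer characterization
\[
\lambda_{\e,k}=\min_{\substack{V_k\subset H^1(\Omega)\\ \dim V_k=k}}\ \max_{\substack{\varphi\in V_k\\ \varphi\neq 0}}\ \frac{d^2E_\e(u_\e,\varphi)}{\int_\Omega \varphi^2\,dx},
\]
where $d^2E_\e(u_\e,\varphi)=\int_\Omega \e|\nabla\varphi|^2+2\e^{-1}(3u_\e^2-1)\varphi^2\,dx$ is the second Gateaux variation. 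Similarly, $\lambda_k$ is characterized by min-max of the Robin quadratic form $Q_\Gamma(\psi)=\int_\Gamma |\nabla_\Gamma\psi|^2-|A_\Gamma|^2\psi^2\,d\mathcal{H}^{N-1}+\int_{\partial\Gamma\cap\partial\Omega}A_{\partial\Omega}({\bf n},{\bf n})\psi^2\,d\mathcal{H}^{N-2}$ over $k$-dimensional subspaces of $H^1(\Gamma)$. So it suffices to produce, for each fixed $k$, a $k$-dimensional subspace $V_{\e,k}\subset H^1(\Omega)$ of ``recovery'' test functions whose Rayleigh quotients for $\e^{-1}d^2E_\e$ converge (in the $\limsup$) to the Rayleigh quotients of $Q_\Gamma$ on the eigenspace spanned by $\varphi^{(1)},\dots,\varphi^{(k)}$.

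The key construction is as follows. First, I would extend each Robin eigenfunction $\varphi^{(j)}$, $j=1,\dots,k$, off $\Gamma$ in the normal direction to a vector field: set $\eta_j(x)=\tilde\varphi^{(j)}(x)\,{\bf n}(x)$ where $\tilde\varphi^{(j)}$ is a smooth extension of $\varphi^{(j)}$ to a tubular neighborhood of $\overline\Gamma$, further extended (cut off) to all of $\R^N$, and ${\bf n}$ is a smooth extension of the unit normal; crucially, since we are in the Neumann setting, these vector fields are \emph{not} required to be compactly supported in $\Omega$ — only tangential to $\partial\Omega$ along $\partial\Omega$ (i.e.\ $\eta_j\cdot\nu_{\partial\Omega}=0$ on $\partial\Omega$) so that the associated flow $\Phi_t^j$ maps $\overline\Omega$ to itself. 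This tangency is exactly what forces the Robin boundary term $A_{\partial\Omega}({\bf n},{\bf n})$ to appear in the limit, matching the second-variation-with-boundary formula of \cite{SZ2}. Then, following the bridge established in Section \ref{Var_sec}, the inner variation $\frac{d^2}{dt^2}\big|_{t=0}E_\e\big(u_\e\circ\Phi^{-t}_j\big)$ equals the second Gateaux variation $d^2E_\e(u_\e,\varphi_{\e,j})$ evaluated at $\varphi_{\e,j}:=-\nabla u_\e\cdot\eta_j$ (plus a first-variation term that vanishes since $u_\e$ is a critical point, using the Neumann condition to kill boundary contributions). These $\varphi_{\e,j}\in H^1(\Omega)$ (by the assumed $C^3$ regularity of $u_\e$) are my candidate test functions; I take $V_{\e,k}=\mathrm{span}\{\varphi_{\e,1},\dots,\varphi_{\e,k}\}$.

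Next I must compute the limits. By Theorem \ref{thm-AC2}, the numerator $\e^{-1}\,d^2E_\e(u_\e,-\nabla u_\e\cdot\eta)$ converges (after normalizing by the constant $\tfrac{4}{3}$ coming from $\lim E_\e(u_\e)=\tfrac{4}{3}\mathcal{H}^{N-1}(\Gamma)$) to the second inner variation of area, which by the classical formula is exactly $Q_\Gamma(\varphi)$ for a normal velocity field $\varphi{\bf n}$ — including the boundary term because $\eta$ need not vanish on $\partial\Omega$. For the denominator, the equipartition of energy and the standard profile analysis give $\e^{-1}\int_\Omega (\nabla u_\e\cdot\eta)^2\,dx=\e^{-1}\int_\Omega |\nabla u_\e|^2(\tilde\varphi\circ)^2\,dx+o(1)\to c_0\int_\Gamma \varphi^2\,d\mathcal{H}^{N-1}$ where $c_0=\int_{\R}(1-s^2)^2\,ds$ (the same constant as in the $\tfrac43$, via $\e|\nabla u_\e|^2\rightharpoonup$ the relevant measure on $\Gamma$), since $\nabla u_\e$ concentrates normal to $\Gamma$. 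The ratio of the two constants is precisely arranged so that $\limsup_\e \e^{-1} \mathrm{Rayleigh}(\varphi_{\e,j})\le Q_\Gamma(\varphi^{(j)})/\int_\Gamma(\varphi^{(j)})^2$; for linear combinations one uses bilinearity of both quadratic forms and the same convergence applied to $\eta=\sum c_j\eta_j$. The last wrinkle is that I need $\dim V_{\e,k}=k$ for small $\e$: since the map $\varphi\mapsto$ limiting profile recovers $\varphi^{(j)}$ on $\Gamma$ and the $\varphi^{(j)}$ are linearly independent on $\Gamma$, linear independence of $\{\varphi_{\e,j}\}$ for small $\e$ follows from a lower-semicontinuity / Gram-matrix argument on the denominators.

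The main obstacle I anticipate is the precise passage to the limit in the numerator with non-compactly supported $\eta$ — i.e.\ extracting the Robin boundary term with the correct sign and coefficient. This is where Theorem \ref{thm-AC2} and the Reshetnyak-type convergence (Theorem \ref{Sp_thm}) do the heavy lifting: one must verify that the measures $\e|\nabla u_\e|^2\,dx$ (or the diffuse energy measures) converge not merely weakly-$*$ in $\Omega$ but in a way that controls mass up to $\partial\Omega$, so that the divergence-theorem manipulations producing the $\int_{\partial\Gamma\cap\partial\Omega}A_{\partial\Omega}({\bf n},{\bf n})\varphi^2$ term are justified; the hypothesis $\lim_\e E_\e(u_\e)=\tfrac43\mathcal H^{N-1}(\Gamma)$ (no loss of mass, in particular none escaping to the boundary) and the $C^2$ regularity of $\overline\Gamma$ are exactly what make this work. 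A secondary technical point is ensuring the first-variation remainder in the inner-to-Gateaux bridge genuinely vanishes under Neumann conditions (it is the integral of $\mathrm{div}$ of the energy-momentum tensor contracted with $\zeta$, plus a boundary term that the tangency $\eta\cdot\nu_{\partial\Omega}=0$ annihilates), which again uses criticality of $u_\e$ and is already encoded in the Section \ref{Var_sec} identities.
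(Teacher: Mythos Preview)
Your approach is essentially identical to the paper's: build test functions $\varphi_{\e,j}=-\nabla u_\e\cdot\eta_j$ from extensions $\eta_j=\varphi^{(j)}{\bf n}$ tangent to $\partial\Omega$, use criticality of $u_\e$ together with Corollary~\ref{inner_rem} to identify $d^2E_\e(u_\e,-\nabla u_\e\cdot\eta)=\delta^2E_\e(u_\e,\eta,\zeta)$, then invoke Theorem~\ref{thm-AC2} and the min--max characterization.

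Three corrections are needed, however. First, and most importantly, Theorem~\ref{thm-AC2} gives $\lim_\e\delta^2E_\e(u_\e,\eta,Z)=\tfrac{4}{3}\big(\delta^2E(\Gamma,\eta,Z)+\int_\Gamma({\bf n},{\bf n}\cdot\nabla\eta)^2\big)$, with an extra nonnegative discrepancy term; to obtain exactly the Robin quadratic form via Theorem~\ref{PI_ineq} you must choose the extension $\eta_j$ so that $({\bf n},{\bf n}\cdot\nabla\eta_j)=0$ on $\Gamma$ (this is possible because $\partial\Gamma$ meets $\partial\Omega$ orthogonally, by Lemma~\ref{ortho_lem}), and you do not mention this. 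Second, your scaling is off: it is $d^2E_\e(u_\e,-\nabla u_\e\cdot\eta)$ (not $\e^{-1}d^2E_\e$) that converges to $\tfrac{4}{3}Q_\Gamma$, and it is $\e\int_\Omega(\nabla u_\e\cdot\eta)^2$ (not $\e^{-1}\int$) that converges to $\tfrac{4}{3}\int_\Gamma\varphi^2$ via Lemma~\ref{Res_Sp}; the ratio is then $\lambda_{\e,k}/\e$ as required. Third, the boundary term in the Robin quadratic form carries a minus sign, $-\int_{\partial\Gamma\cap\partial\Omega}A_{\partial\Omega}({\bf n},{\bf n})\varphi^2$, not a plus sign.
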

The proof of Theorem \ref{eigen_thm} will be given in Section 4.
\vglue 0.1cm
\noindent
We mention that when $\Gamma$ is a minimal hypersurface satisfying certain nondegeneracy conditions, Pacard and Ritor\'{e} \cite{PR} construct critical points $u_{\e}$ of $E_{\e}$ whose zero 
level sets converge to $\Gamma$ and the limit $\lim_{\varepsilon \rightarrow 0} E_{\varepsilon}(u_{\varepsilon}) = 
\frac{4}{3}\mathcal{H}^{N-1}(\Gamma)$ holds. Thus, Theorem \ref{eigen_thm} applies in particular to this case. Also we should say that we do not know whether there are contexts beyond the previously mentioned planar result in \cite{K} where asymptotic {\it equality} holds rather than just inequality.

In Sections \ref{B_sec} and \ref{OK_sec} we extend our study to the Ohta-Kawasaki functional which involves a nonlocal term:
\begin{equation}
\label{OKintro}
\mathcal{E}_{\e,\gamma} (u) =\int_{\Omega}\left(\frac{\varepsilon \abs{\nabla u}^2}{2} +\frac{(1-u^2)^2}{2\varepsilon}\right) dx +\frac{4}{3}\gamma\int_{\Omega}\int_{\Omega} G(x,y)u(x) u(y) dx dy
\end{equation}
where $\gamma\geq 0$ is a fixed constant and
$G(x,y)$ is the Green's function for $\Omega$ satisfying $$-\Delta G =\delta -\frac{1}{|\Omega|} ~\text{on } \Omega$$ with Neumann boundary condition.
We associate 
 to each $u\in L^2(\Omega)$ a function $v\in W^{2,2}(\Omega)$, denoted by $(-\Delta)^{-1} u$, as the solution to the following Poisson equation with Neumann boundary condition:
$$
-\Delta v = u-\frac{1}{|\Omega|}\int_{\Omega} u dx~\text{in}~\Omega, \frac{\partial v}{\partial \nu}=0~\text{on}~\partial\Omega, ~\int_{\Omega} v(x) dx=0.
$$
Note that
$$(-\Delta)^{-1} u = \int_{\Omega} G(x, y) u(y) dy.$$
Let us denote the second inner variation of $\mathcal{E}_{\e, \gamma}$ at $u_\e$ with respect to $C^3(\overline{\Omega})$ vector fields
$\eta,\zeta$ by
$$\delta^{2} \mathcal{E}_{\e,\gamma}(u_{\varepsilon}, \eta, \zeta):=  \left.\frac{d^2}{dt^2}\right\rvert_{t=0}  \mathcal{E}_{\e,\gamma}\left(u_\e\circ (I +t\eta + \frac{t^2}{2}\zeta)^{-1}\right).$$
A more comprehensive analysis concerning inner variations will be presented in Section \ref{Var_sec}.

Our second main result is summarized in the following theorem.

\begin{thm}[Stability of Ohta-Kawasaki passes to the limit; upper semicontinuity of Ohta-Kawasaki eigenvalues]
\label{OK2}
Let $\Omega$ be an open smooth bounded domain in $\RR^{N}$ ($N\geq 2$). 
Let $\gamma\geq 0$. Fix $m\in (-1,1)$.
Let $\{u_{\e}\}\subset C^3 (\overline{\Omega})$ be a sequence of critical points of the Ohta-Kawasaki functional (\ref{OKintro})
 subject to the mass constraint
$
\frac{1}{|\Omega|} \int_{\Omega} u\,dx = m
$
that converges in $L^2(\Omega)$ to a function $u_0\in BV (\Omega, \{1, -1\})$ with an interface $\Gamma=\partial\{u_0=1\}\cap\Omega$ having the property that $\overline{\Gamma}$ is $C^2$. 
Assume that
 $$\frac{3}{4}\lim_{\varepsilon \rightarrow 0} \mathcal{E}_{\e,\gamma}(u_{\varepsilon}) = \mathcal{E}_{\gamma}(\Gamma):=
\mathcal{H}^{N-1}(\Gamma) + \gamma \int_{\Omega}\int_{\Omega} G(x,y)u_0(x) u_0(y) dx dy.$$
Let $v_0(x)=  \int_{\Omega} G(x, y) u_0(y) dy$. For any smooth function $\xi:\overline{\Omega}\rightarrow\RR$, we denote
\begin{multline*}
\delta^2 \mathcal{E}_{\gamma} (\Gamma,\xi):=\int_{\Gamma} \left(|\nabla_{\Gamma}\xi|^2  -|A_{\Gamma}|^2\xi^2\right)\, d\mathcal{H}^{N-1} - \int_{\partial\Gamma\cap\partial\Omega} A_{\partial\Omega}({\bf n}, {\bf n})\xi^2 \,d\mathcal{H}^{N-2}\nonumber \\ +  8\gamma\int_{\Gamma}\int_{\Gamma} G(x, y) \xi(x)\xi(y) d \mathcal{H}^{N-1} (x) d \mathcal{H}^{N-1}(y) + 4 \gamma\int_{\Gamma} (\nabla v_0\cdot {\bf n}) \xi^2 d \mathcal{H}^{N-1} (x).
\end{multline*}
Here ${\bf n} = (n_{1},\cdots,n_{N})$ denotes the unit normal to $\Gamma$ pointing out of the region $\{x\in\Omega:\,u_0(x)=1\}$.
Then,
the following conclusions hold:
\begin{myindentpar}{1cm}
(i)  There is a constant $\lambda$ such that $(N-1) H + 4 \gamma v_0 =\lambda$ on $\Gamma$ where $H$ is the mean curvature of $\Gamma$. Moreover, $\p\Gamma$ must meet $\p\Omega$ orthogonally (if at all).\\
(ii) Let $\xi:\overline{\Omega}\rightarrow\RR$ be any smooth function satisfying
$
\int_{\Gamma} \xi(x) d\mathcal{H}^{N-1}(x)=0.
$
Then, for all smooth vector fields $\eta\in (C^{3}(\overline{\Omega}))^{N}$ with $\eta=\xi {\bf n}$ on $\Gamma$, $\eta\cdot \nu=0$ on $\partial\Omega$,  $({\bf n},{\bf n}\cdot\nabla\eta) =0$  on $\Gamma$ and for $W:= (\eta \cdot\nabla) \eta-(\div \eta)\eta$, we 
have
\begin{align}\frac{3}{4}\lim_{\varepsilon\rightarrow 0}\delta^{2} \mathcal{E}_{\e,\gamma}(u_{\varepsilon}, \eta, W) = \delta^2 \mathcal{E}_{\gamma} (\Gamma,\xi).
\label{secst}
\end{align} 
(iii) If $\{u_\e\}$ are stable critical points of $\mathcal{E}_{\e,\gamma}$ with respect to the mass constraint $\frac{1}{|\Omega|} \int_{\Omega} u\, dx = m$, then 
for all smooth function $\xi:\overline{\Omega}\rightarrow\RR$ satisfying
$
\int_{\Gamma} \xi(x) d\mathcal{H}^{N-1}(x)=0,
$
we have
$$\delta^2 \mathcal{E}_{\gamma} (\Gamma,\xi)\geq 0.$$
(iv) Assume that $\Gamma$ is connected. Let $\lambda_{\e, \gamma, k}$ be the $k^{th}$ eigenvalue of the operator $-\e \Delta + 2\e^{-1}(3u_{\e}^2-1) +\frac{8}{3} \gamma(-\Delta)^{-1}$ in $\Omega$ with zero Neumann condition on $\partial\Omega$. Let 
$\lambda_{\gamma, k}$ and $\varphi^{(\gamma, k)}:\overline{\Gamma}\to\R$ be the $k^{th}$ eigenvalue and eigenfunction of the operator $-\Delta_{\Gamma} - |A_{\Gamma}|^2 + 8\gamma (-\Delta)^{-1}(\chi_{\Gamma})+ 4\gamma (\nabla v_0\cdot {\bf n})$ in $\Gamma$ subject to Robin boundary conditions on $\partial\Gamma \cap\partial\Omega$, namely
\begin{equation*}
\small
 \left\{
 \begin{alignedat}{2}
  \left(-\Delta_{\Gamma} - |A_{\Gamma}|^2 + 4\gamma (\nabla v_0\cdot {\bf n})\right)\varphi^{(\gamma, k)} (x) + 8\gamma \int_{\Gamma} G(x, y) \varphi^{(\gamma, k)} (y)d \mathcal{H}^{N-1} (y)  &=\lambda_{\gamma, k}\varphi^{(\gamma, k)}(x)~&&\text{in} ~\Gamma, \\\
 \frac{\partial\varphi^{(\gamma, k)}}{\p  {\bf n}}+A_{\partial\Omega}({\bf n}, {\bf n}) \varphi^{(\gamma, k)} &=0\h~&&\text{on}~\p \Gamma\cap\partial\Omega.
 \end{alignedat}
 \right.
\end{equation*}
Then
\[
\limsup_{\e\rightarrow 0}\frac{\lambda_{\e, \gamma, k}}{\e}\leq \lambda_{\gamma, k}.
\]
(v) The conclusion in (iv) also holds if in the above eigenvalue problems we replace the homogeneous Neumann conditions and Robin boundary conditions by homogeneous Dirichlet boundary conditions.
\end{myindentpar}
\end{thm}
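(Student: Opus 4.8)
\smallskip\noindent\emph{Plan of proof.} The strategy is to prove (ii) and deduce (i), (iii), (iv), (v) from it. Throughout, $u_\e$ solves $-\e\Delta u_\e+2\e^{-1}(u_\e^3-u_\e)+\tfrac83\gamma v_\e=\mu_\e$ in $\Omega$ with $\p u_\e/\p\nu=0$ on $\p\Omega$, where $v_\e:=(-\Delta)^{-1}u_\e$ and $\mu_\e$ is the Lagrange multiplier; equivalently $d\mathcal E_{\e,\gamma}(u_\e)[\psi]=\mu_\e\int_\Omega\psi$ for all $\psi\in H^1(\Omega)$. We record three facts. (a) $\{\mu_\e\}$ is bounded: the explicit first inner-variation formula and the bound on $\mathcal E_{\e,\gamma}(u_\e)$ give $|\delta\mathcal E_{\e,\gamma}(u_\e,\eta)|=|\mu_\e\int_\Omega u_\e\,\div\eta|\le C\|\eta\|_{C^1}$ for $\eta$ tangent to $\p\Omega$, and testing with $\eta=\nabla\phi_\e$, $-\Delta\phi_\e=u_\e-m$ (Neumann), for which $\int_\Omega u_\e\,\div\eta\to(m^2-1)|\Omega|\neq0$, forces $|\mu_\e|\le C$; passing to a subsequence, $\mu_\e\to\mu$. (b) $v_\e\to v_0$ in $W^{2,p}(\Omega)\subset C^1(\overline\Omega)$ for every $p<\infty$, and since $E_\e(u_\e)\to\tfrac43\mathcal H^{N-1}(\Gamma)$, the structure theory for these energies (equipartition, the Reshetnyak-type convergence of Theorem \ref{Sp_thm}, $\overline\Gamma\in C^2$) gives $\e\,\nabla u_\e\otimes\nabla u_\e\rightharpoonup\tfrac43\,{\bf n}\otimes{\bf n}\,\mathcal H^{N-1}\lfloor\Gamma$ weakly-$*$ and concentration of the energy density on $\overline\Gamma$. (c) For fixed smooth $\eta$, $\varphi_\e:=-\nabla u_\e\cdot\eta\rightharpoonup 2({\bf n}\cdot\eta)\,\mathcal H^{N-1}\lfloor\Gamma$ weakly-$*$, since $Du_0=-2{\bf n}\,\mathcal H^{N-1}\lfloor\Gamma$ and $u_\e\to u_0$ in $L^1$.

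\smallskip\noindent\emph{Proof of (ii).} Fix $\xi$, $\eta$ as in the statement, put $W=(\eta\cdot\nabla)\eta-(\div\eta)\eta$ and $\varphi_\e=-\nabla u_\e\cdot\eta$. Regarding $t\mapsto u_\e\circ(I+t\eta+\tfrac{t^2}2W)^{-1}$ as a curve, the chain rule gives $\delta^2\mathcal E_{\e,\gamma}(u_\e,\eta,W)=d^2\mathcal E_{\e,\gamma}(u_\e)[\varphi_\e,\varphi_\e]+d\mathcal E_{\e,\gamma}(u_\e)[\ddot u]$, and the purpose of this $W$ is that the acceleration satisfies $\langle\ddot u,\psi\rangle=-\int_\Omega(\nabla u_\e\cdot\eta)(\eta\cdot\nabla\psi)$ for all $\psi\in C^1(\overline\Omega)$ (integrating by parts, $\eta\cdot\nu=0$ on $\p\Omega$); hence $\int_\Omega\ddot u=0$ and $d\mathcal E_{\e,\gamma}(u_\e)[\ddot u]=\mu_\e\int_\Omega\ddot u=0$. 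Thus $\delta^2\mathcal E_{\e,\gamma}(u_\e,\eta,W)=d^2E_\e(u_\e)[\varphi_\e,\varphi_\e]+\tfrac83\gamma\int_\Omega\!\int_\Omega G\varphi_\e\varphi_\e$. For the local term, $d^2E_\e(u_\e)[\varphi_\e,\varphi_\e]=\delta^2E_\e(u_\e,\eta,W)-dE_\e(u_\e)[\ddot u]$; since $\delta^2E_\e(u_\e,\eta,W)$ is an integral over $\Omega$ and $\p\Omega$ of the Allen--Cahn energy density of $u_\e$ against coefficients built from $\eta$ and $W$, and that density concentrates on $\overline\Gamma$ with the standard multiplicities regardless of the bounded forcing $\mu_\e-\tfrac83\gamma v_\e$ in the equation, Theorem \ref{thm-AC2} gives $\tfrac34\delta^2E_\e(u_\e,\eta,W)\to\delta^2E(\Gamma,\xi):=\int_\Gamma(|\nabla_\Gamma\xi|^2-|A_\Gamma|^2\xi^2)\,d\mathcal H^{N-1}-\int_{\p\Gamma\cap\p\Omega}A_{\p\Omega}({\bf n},{\bf n})\xi^2\,d\mathcal H^{N-2}$ (cf.\ Theorem \ref{ACstab}), while $dE_\e(u_\e)[\ddot u]=\int_\Omega(\mu_\e-\tfrac83\gamma v_\e)\ddot u=-\tfrac83\gamma\int_\Omega v_\e\ddot u=-\tfrac83\gamma\int_\Omega\varphi_\e(\eta\cdot\nabla v_\e)\to-\tfrac{16}3\gamma\int_\Gamma\xi^2(\nabla v_0\cdot{\bf n})$ by (b), (c) and $\eta=\xi{\bf n}$ on $\Gamma$; hence $\tfrac34 d^2E_\e(u_\e)[\varphi_\e,\varphi_\e]\to\delta^2E(\Gamma,\xi)+4\gamma\int_\Gamma(\nabla v_0\cdot{\bf n})\xi^2$. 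For the nonlocal term, $\tfrac34\cdot\tfrac83\gamma\int\!\int G\varphi_\e\varphi_\e=2\gamma\int_\Omega\varphi_\e(-\Delta)^{-1}\varphi_\e$, and since $(-\Delta)^{-1}\varphi_\e\to w_0:=2\int_\Gamma G(\cdot,y)\xi(y)\,d\mathcal H^{N-1}(y)$, a single-layer potential of a smooth density on the $C^2$ surface $\Gamma$ (hence continuous up to $\Gamma$), while $\varphi_\e\rightharpoonup2\xi\,\mathcal H^{N-1}\lfloor\Gamma$, a cross-term argument (separating $\int\varphi_\e w_0$ and integrating $\varphi_\e=-\nabla u_\e\cdot\eta$ by parts against $(-\Delta)^{-1}\varphi_\e-w_0$) gives $\int_\Omega\varphi_\e(-\Delta)^{-1}\varphi_\e\to4\int_\Gamma\!\int_\Gamma G\xi\xi$, i.e.\ $8\gamma\int_\Gamma\!\int_\Gamma G\xi\xi$. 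Adding the two contributions yields \eqref{secst}. (The hypothesis $\int_\Gamma\xi=0$ is not used here; it is natural only for (iii).)

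\smallskip\noindent\emph{Proof of (i), (iii), (iv), (v).} For (i), the first-inner-variation analogue gives $\tfrac34\delta\mathcal E_{\e,\gamma}(u_\e,\eta)\to\int_\Gamma\big((N-1)H+4\gamma v_0\big)({\bf n}\cdot\eta)\,d\mathcal H^{N-1}+(\text{boundary term on }\p\Gamma\cap\p\Omega)$, while $\delta\mathcal E_{\e,\gamma}(u_\e,\eta)=\mu_\e\int_\Omega u_\e\,\div\eta\to2\mu\int_\Gamma{\bf n}\cdot\eta$; comparing, with ${\bf n}\cdot\eta$ compactly supported in $\Gamma$, forces $(N-1)H+4\gamma v_0=\tfrac32\mu=:\lambda$, and then varying the boundary behaviour of $\eta$ among fields tangent to $\p\Omega$ forces the boundary term to vanish, i.e.\ $\p\Gamma$ meets $\p\Omega$ orthogonally, as in the classical treatment of the Neumann problem. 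For (iii), stability under the mass constraint gives $d^2\mathcal E_{\e,\gamma}(u_\e)[\psi,\psi]\ge0$ whenever $\int_\Omega\psi=0$; applying this to $\psi=\varphi_\e-c_\e$, where $c_\e=|\Omega|^{-1}\int_\Omega(u_\e-u_0)\,\div\eta=O(\|u_\e-u_0\|_{L^1})=o(\e^{1/2})$ (using $\int_\Gamma\xi=0$), and using $d^2\mathcal E_{\e,\gamma}(u_\e)[\varphi_\e,1]=2\e^{-1}\int_\Omega(u_\e^3-u_\e)\,\div\eta=O(\e^{-1/2})$ and $d^2\mathcal E_{\e,\gamma}(u_\e)[1,1]=O(\e^{-1})$, the correction terms are $o(1)$, so $0\le\tfrac34 d^2\mathcal E_{\e,\gamma}(u_\e)[\varphi_\e,\varphi_\e]+o(1)=\tfrac34\delta^2\mathcal E_{\e,\gamma}(u_\e,\eta,W)+o(1)\to\delta^2\mathcal E_\gamma(\Gamma,\xi)$. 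For (iv), take the first $k$ eigenfunctions $\varphi^{(\gamma,1)},\dots,\varphi^{(\gamma,k)}$ of the limiting Jacobi-type operator, $L^2(\Gamma)$-orthonormal, extend them to smooth $\xi_j:\overline\Omega\to\R$ and choose $\eta_j$ as in (ii) with $\eta_j=\xi_j{\bf n}$ on $\Gamma$ --- possible because $\p\Gamma\perp\p\Omega$ by (i), which makes the conditions compatible on $\p\Gamma\cap\p\Omega$ --- and set $\psi_j^\e=-\nabla u_\e\cdot\eta_j$. Then $\e\langle\psi_i^\e,\psi_j^\e\rangle_{L^2(\Omega)}\to\tfrac43\int_\Gamma\varphi^{(\gamma,i)}\varphi^{(\gamma,j)}=\tfrac43\delta_{ij}$ (so $V_k^\e:=\mathrm{span}\{\psi_j^\e\}$ is $k$-dimensional for small $\e$), and by polarization, (ii) and an integration by parts absorbing the Robin condition, $\tfrac34 d^2\mathcal E_{\e,\gamma}(u_\e)[\psi_i^\e,\psi_j^\e]\to\lambda_{\gamma,j}\delta_{ij}$; since $d^2\mathcal E_{\e,\gamma}(u_\e)[\psi,\psi]=\langle L_\e\psi,\psi\rangle$ with $L_\e=-\e\Delta+2\e^{-1}(3u_\e^2-1)+\tfrac83\gamma(-\Delta)^{-1}$, the min--max principle gives $\tfrac{\lambda_{\e,\gamma,k}}{\e}\le\max_{0\neq\psi\in V_k^\e}\tfrac{d^2\mathcal E_{\e,\gamma}(u_\e)[\psi,\psi]}{\e\|\psi\|_{L^2(\Omega)}^2}\to\lambda_{\gamma,k}$. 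Part (v) is identical with the Dirichlet eigenfunctions (so $\xi_j=0$ on $\p\Gamma$, whence $\xi_j$ and $\eta_j$ may be taken to vanish near $\p\Omega$, giving $\psi_j^\e\in H^1_0(\Omega)$) and no $A_{\p\Omega}$ term.

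\smallskip\noindent\emph{Expected main obstacle.} The essential point is the Allen--Cahn part of (ii): one must show that the inner-variation limit of Theorem \ref{thm-AC2} is insensitive to replacing the Allen--Cahn equation by the forced one $-\e\Delta u_\e+2\e^{-1}(u_\e^3-u_\e)=\mu_\e-\tfrac83\gamma v_\e$ --- i.e.\ that equipartition and the Reshetnyak-type concentration of the energy density and of the stress $\e\,\nabla u_\e\otimes\nabla u_\e$ survive this bounded perturbation --- which requires re-examining, not merely quoting, the proof of Theorem \ref{thm-AC2}. A secondary difficulty is the limit of the nonlocal bilinear form $\int\!\int G\varphi_\e\varphi_\e$, which must be handled carefully because of the diagonal singularity of $G$, via the weak-$*$ convergence $\varphi_\e\rightharpoonup2({\bf n}\cdot\eta)\,\mathcal H^{N-1}\lfloor\Gamma$ and the continuity of single-layer potentials of smooth densities on the $C^2$ hypersurface $\Gamma$. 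Everything else is routine once these are in place.
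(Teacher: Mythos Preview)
Your overall architecture is sound and close to the paper's, but there are two concrete issues worth flagging.

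\textbf{The ``main obstacle'' is a phantom.} You worry that Theorem~\ref{thm-AC2} must be re-proved for the forced equation $-\e\Delta u_\e+2\e^{-1}(u_\e^3-u_\e)=\mu_\e-\tfrac83\gamma v_\e$. It does not: Theorem~\ref{thm-AC2} makes \emph{no} criticality assumption on $u_\e$ whatsoever---only the energy convergence $E_\e(u_\e)\to\tfrac43\mathcal H^{N-1}(\Gamma)$, which follows immediately from the hypothesis $\mathcal E_{\e,\gamma}(u_\e)\to\tfrac43\mathcal E_\gamma(\Gamma)$ together with $B(u_\e)\to B(u_0)$ (from $L^2$ convergence). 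The equipartition and Reshetnyak arguments in Lemma~\ref{equi_lem} and Lemma~\ref{Res_Sp} use only this energy convergence. So the passage to the limit in $\delta^2 E_\e(u_\e,\eta,\cdot)$ is free; likewise the nonlocal limit (your ``secondary difficulty'') is exactly Theorem~\ref{B_thm}, which again needs only $L^2$ convergence of $u_\e$. One small sloppiness in your (ii): Theorem~\ref{thm-AC2} is stated for $Z=(\eta\cdot\nabla)\eta$, not for $W$; the passage from $Z$ to $W$ produces an extra $-\int_\Gamma\div^\Gamma((\div\eta)\eta)=-(N-1)^2\!\int_\Gamma H^2\xi^2$ which exactly cancels the $(N-1)^2H^2\xi^2$ term coming from Theorem~\ref{PI_ineq}. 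The paper carries out this cancellation explicitly; you should too.

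\textbf{The genuine gap is in (iii).} Your argument requires $c_\e:=|\Omega|^{-1}\!\int_\Omega(u_\e-u_0)\div\eta=o(\e^{1/2})$, so that the cross term $c_\e\cdot d^2\mathcal E_{\e,\gamma}(u_\e)[\varphi_\e,1]=c_\e\cdot O(\e^{-1/2})$ vanishes. But the hypotheses give only $u_\e\to u_0$ in $L^2$---no rate---so $c_\e\to 0$ with no control on the speed, and the correction terms need not be $o(1)$. The paper sidesteps this completely by a perturbation trick: choose a fixed smooth $\beta$ tangent to $\p\Omega$ with $\int_\Gamma\beta\cdot{\bf n}\neq 0$, set $h(\e)=-\int_\Omega u_\e\div\eta\big/\!\int_\Omega u_\e\div\beta$ and $\eta^\e=\eta+h(\e)\beta$. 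Then $\int_\Omega u_\e\div\eta^\e=0$ \emph{exactly}, so $-\nabla u_\e\cdot\eta^\e$ has zero mean and stability applies directly; since $h(\e)\to 0$ (because $\int_\Gamma\xi=0$), one has $\eta^\e\to\eta$ in $C^3$, and the uniform energy bound lets you replace $\eta^\e$ by $\eta$ in the limit of the inner variations. This avoids any rate hypothesis. The same $\eta^\e$ device also streamlines (i), though your Lagrange-multiplier version works there.
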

The proof of Theorem \ref{OK2} will be given in Section \ref{OK_sec}.
\vglue 0.1cm
\noindent
Item (i) in Theorem \ref{OK2} above is just the condition of criticality for the limiting functional $\mathcal{E}_\gamma$ while the right-hand side of \eqref{secst}, that is $\delta^2 \mathcal{E}_{\gamma} (\Gamma,\xi)$, is its second variation (see \cite[Theorems 2.3 and 2.6]{CS}), so item (iii) of the theorem asserts that  stability is passed to the limiting interface. A special case of Theorem \ref{OK2} (iv) where $\gamma=0$ is an extension of our Theorem \ref{eigen_thm} on the Allen-Cahn functional to the mass-constrained Cahn-Hilliard setting. 

We should say that throughout this article we have not sought to present results under weakest possible regularity assumptions on the limiting interface. Adapting results to the situation where the limiting interface possesses a low-dimensional singular set should be feasible.

\subsection*{Notation}
 Throughout, $\Omega$ is an open, smooth, bounded domain in $\RR^{N}$ ($N\geq 2$).
We let $\nu$ be the outer unit normal to $\p\Omega$. For any Lebesgue measurable subset $S\subset \R^N$, we use $|S|$ to denote its $N$-dimensional Lebesgue measure.
If $F: \RR\times \RR^N\rightarrow \RR$ is a smooth function then 
we will write
$F= F(z, {\bf p})$
for $z\in \RR$ and ${\bf p} =(p_1, \cdots, p_N)\in \RR^N$ and we will set $\nabla_{\bf p} F = (F_{p_1}, \cdots, F_{p_N}).$ 
If $\eta:\Omega\rightarrow \RR^N$ is a vector field, then we write $\eta = (\eta^1,\cdots,\eta^N)$. If $\eta\in (C^1 (\Omega))^N$, we define a new vector field $Z:=(\eta\cdot \nabla ) \eta $ whose $i$-th component is 
$
Z^i =  \frac{\partial \eta^i}{\partial x_j}\eta^j,
$
invoking the summation convention on repeated indices.
We use $(\nabla \eta)^2$ to denote the matrix whose $(i, k)$ entry is
$
\frac{\p \eta^i}{\p x_j} \frac{\p \eta^j}{\p x_k},
$
and we use $(\cdot, \cdot)$ to denote the standard inner product in $\RR^N$.

When a differentiable function, say $\phi$, is scalar-valued so that there is no room for confusion, we write $\phi_i=\frac{\p \phi}{\p x_i}$.
\section{The Relationship Between Gateaux and Inner Variations}
\label{Var_sec}

In this section, we first review the definitions of Gateaux variations, then give the definitions of first and second inner variations, and finally establish the relationship between the two notions of variation.

The typical functionals we consider are of the form
\begin{equation}
A(u):= \int_{\Omega} F(u(x), \nabla u(x)) dx\label{basic}
\end{equation}
where $u\in C^3(\overline{\Omega})$ and $F: \RR\times \RR^N\rightarrow \RR$ is a smooth function. We mention that in this paper, for ease of presentation, we state results under very generous regularity conditions on the functions and functionals involved. No doubt many of these smoothness assumptions could be relaxed.
\subsection{Gateaux variations and inner variations}
We recall that the first and second Gateaux variations of $A$ at $u\in C^{3}(\Omega)$ with respect to $\varphi \in C^{3}(\Omega)$, denoted here by $dA(u,\varphi)$ and $d^2 A(u,\varphi)$ respectively, are defined by 
$$
dA(u,\varphi):= \left.\frac{d}{dt}\right\rvert_{t=0} A(u + t\varphi),\quad d^{2} A(u,\varphi) := \left.\frac{d^2}{dt^2}\right\rvert_{t=0} A(u + t\varphi);$$
see, for example, \cite[Chapter 1]{Wi}.

On the other hand, a distinct notion of variation is that of inner variation, usually taken with respect to compactly supported vector fields, see e.g. \cite[pp. 283-293 of Section 3.1.1]{GMS}. 
It has been used in several contexts, for example, in the study of weakly Noether harmonic maps \cite[Section 1.4.2]{H}, in the investigation of the asymptotics for solutions of the Ginzburg-Landau system \cite[Chapter 13]{SS2}, 
and also in second order asymptotic limits in phase transitions \cite{Le, Le2}, to name a few. Most closely related to the subject of this paper are the works
 \cite{Le, Le2} where the first author studies the Morse index and upper semicontinuity
of eigenvalue problems in phase transitions when Dirichlet boundary conditions are enforced. Inspired by the case of compactly supported vector fields, we define below the concept of inner variations with respect to general, that is, not necessarily compactly supported, vector fields, in order to examine the corresponding asymptotics of Neumann eigenvalues.

To this end,
consider any smooth vector field $\eta\in (C^{3}(\overline{\Omega}))^{N}$ and associated with it, suppose that we have a $t$-dependent map $\Phi_t$ with the property that
\begin{equation}
\label{map-deform1} 
\Phi_{t} (x) = x + t\eta(x) + O(t^2).
\end{equation}
In this paper, by $O(t^k)$ $(k\leq 3)$, we mean any quantity $Q(x,t)$ such that it is $C^3$ in the variables $x$ and $t$ and furthermore $\abs{Q(x,t)}/\abs{t}^k$ is uniformly bounded in $\overline{\Omega}$ when $|t|$ is small.

For $\abs{t}$ sufficiently small, the map $\Phi_t$ is a diffeomorphism of $\RR^N$ onto itself and thus we can define its inverse map $\Phi_t^{-1}$.
We then define the first inner variation of $A$  at $u$ with respect to the velocity vector field $\eta$ by 
\begin{equation}
\label{inner1defn}
\delta A(u,\eta) := \left.\frac{d}{dt}\right\rvert_{t=0} A(u\circ\Phi_t^{-1}).
\end{equation}
Now if in addition to $\eta$ we consider a second smooth vector field $\zeta\in (C^{3}(\overline{\Omega}))^{N}$ and if the diffeomorphism $\Phi_t(x)$ satisfies
\begin{equation}
\label{map-deform2} 
\Phi_{t} (x) = x + t\eta(x) + \frac{t^2}{2}\zeta(x)+O(t^3),
\end{equation}
then we define the second inner variation of $A$ at $u$ with respect to the velocity vector field $\eta$ and acceleration vector field $\zeta$ by
\begin{equation}
 \delta^{2} A(u,\eta,\zeta) := \left.\frac{d^2}{dt^2}\right\rvert_{t=0} A(u\circ\Phi_t^{-1}).\label{innerdefn2}
\end{equation}
We note that $\Phi_t^{-1}$ does not map $\Omega$ to $\Omega$ in general. Thus, in calculating inner variations, we implicitly extend $u$ to be a smooth function on a neighborhood of $\overline{\Omega}$. The calculations show that the inner variations do not depend on these extensions.
\begin{rem}
In the above definitions of variations, we do not use any particular form of $A$. Thus, they apply equally to local functionals of the form (\ref{basic})
and nonlocal functionals of the form (\ref{B_def}) in Section \ref{B_sec}.
\end{rem}
The goal of the next subsection is to calculate the above variations and to explore their relationship.
\subsection{Calculation and relationship between variations} Let $A$ be as in (\ref{basic}).
Carrying out the standard computation of $\left.\frac{d}{dt}\right\rvert_{t=0} A(u + t\varphi)$ and $\left.\frac{d^2}{dt^2}\right\rvert_{t=0} A(u + t\varphi)$ for $u$ and $\phi$ in $C^1(\Omega)$, we obtain the well-known formulas for the first and second Gateaux variations:
\begin{eqnarray}
\label{fveq}
dA(u,\varphi)=\left.\frac{d}{dt}\right\rvert_{t=0} A(u + t\varphi)=\int_{\Omega}\left(F_{z}\varphi + F_{p_i}\varphi_i\right) dx.
\end{eqnarray}
and
\begin{equation}\label{sveq} 
d^2 A(u,\varphi)=\left.\frac{d^2}{dt^2}\right\rvert_{t=0} A(u + t\varphi)=\int_{\Omega}\left( F_{zz}\varphi^2 + 2 F_{z p_i} \varphi\varphi_i + F_{p_i p_j}\varphi_i \varphi_j\right) dx,
\end{equation}
where in these formulae all derivatives of $F$ are evaluated at $z=u$ and ${\bf p}=\nabla u.$ 

We turn now to the calculation of inner variations. In the following lemmas, we establish two different formulas for the inner variations of the functional $A$. The first is more general and is obtained via direct calculation. The second we prove via a change of variables.
These formulas will be used in our proof of the asymptotic upper bound for Allen-Cahn Neumann eigenvalues.
\begin{lemma} [Inner variations via direct calculation]
\label{SIV_direct} Let $A$ be as in (\ref{basic}).
Assume that $u\in C^3(\overline{\Omega})$. Let $\eta, \zeta\in (C^{3}(\overline{\Omega}))^{N}$.
The first inner variation of $A$  at $u$ with respect to $\eta$ is given by
\[
 \delta A(u,\eta)= \int_{\Omega} \left[F_z (-\nabla u\cdot\eta ) + F_{p_i} (\frac{\partial}{x_i}(-\nabla u\cdot \eta))\right] dx.
 \]
The second inner variation of $A$  at $u$ with respect to $\eta$ and $\zeta$ is
\begin{multline*} \delta^2 A(u,\eta,\zeta) =\int_{\Omega} \left[F_{zz} (\nabla u\cdot \eta)^2 + 2 F_{zp_i} (\nabla u\cdot\eta)\frac{\partial}{x_i}(\nabla u\cdot \eta) + F_{p_i p_j} \frac{\partial}{x_i} (\nabla u \cdot \eta)\frac{\partial}{x_j}(\nabla u \cdot \eta)
\right] dx\\ + \int_{\Omega} \left[F_{z} X_0 + F_{p_i} \frac{\partial}{x_i}X_0\right] dx,
\end{multline*}
where $X_0$ is given by 
\begin{equation}
\label{Xzero}
X_0:= (D^2 u \cdot \eta, \eta) + (\nabla u, 2(\eta\cdot \nabla ) \eta -\zeta).
\end{equation}
\end{lemma}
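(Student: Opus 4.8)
The plan is to compute both inner variations by differentiating $t\mapsto A(u\circ\Phi_t^{-1})$ directly under the integral sign, exploiting the fact that $\Phi_t^{-1}$ has a Taylor expansion whose coefficients are determined by $\eta$ and $\zeta$. The starting point is to fix, for each small $t$, the substitution $u_t:=u\circ\Phi_t^{-1}$ (a $C^3$ function on a neighborhood of $\overline{\Omega}$, using the implicit smooth extension of $u$ mentioned before the lemma), so that $A(u_t)=\int_\Omega F(u_t(x),\nabla u_t(x))\,dx$. Since the domain of integration is held fixed at $\Omega$, the only $t$-dependence is inside the integrand, and we may differentiate under the integral: $\frac{d}{dt}A(u_t)=\int_\Omega\big(F_z\,\dot u_t+F_{p_i}\,\partial_i\dot u_t\big)\,dx$ and similarly for the second derivative, exactly mirroring the derivation of \eqref{fveq}--\eqref{sveq} but with the ``variation'' $\varphi$ replaced by the first- and second-order Taylor coefficients of $t\mapsto u_t$ at $t=0$.

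So the first task is to identify these coefficients. Writing $\Psi_t:=\Phi_t^{-1}$ and $u_t(x)=u(\Psi_t(x))$, I would differentiate in $t$ and evaluate at $t=0$. From \eqref{map-deform1} (or \eqref{map-deform2}) one reads off $\Phi_0=\mathrm{id}$, $\dot\Phi_0=\eta$, $\ddot\Phi_0=\zeta$, and then, from the identity $\Phi_t\circ\Psi_t=\mathrm{id}$, one extracts $\Psi_0=\mathrm{id}$, $\dot\Psi_0=-\eta$, and $\ddot\Psi_0=2(\eta\cdot\nabla)\eta-\zeta$ (differentiate $\Phi_t(\Psi_t(x))=x$ twice, using the chain rule: first derivative gives $D\Phi_t(\Psi_t)\dot\Psi_t+\dot\Phi_t(\Psi_t)=0$, hence $\dot\Psi_0=-\eta$; second derivative and substitution of $\dot\Psi_0=-\eta$ produces $\ddot\Psi_0$, with the $(\eta\cdot\nabla)\eta$ term coming from $D\dot\Phi_0\cdot\dot\Psi_0=D\eta\cdot(-\eta)$ contributing with the appropriate sign). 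Then the chain rule gives
\[
\dot u_t\big|_{t=0}=\nabla u\cdot\dot\Psi_0=-\nabla u\cdot\eta,
\]
which immediately yields the stated formula for $\delta A(u,\eta)$ upon substituting $\varphi=-\nabla u\cdot\eta$ into \eqref{fveq}. For the second variation, differentiating $u_t=u(\Psi_t)$ twice gives $\ddot u_t\big|_{t=0}=(D^2u\,\dot\Psi_0,\dot\Psi_0)+\nabla u\cdot\ddot\Psi_0=(D^2u\,\eta,\eta)+\nabla u\cdot(2(\eta\cdot\nabla)\eta-\zeta)=X_0$, precisely \eqref{Xzero}. Substituting the pair $(\dot u_t\big|_0,\ddot u_t\big|_0)=(-\nabla u\cdot\eta,\,X_0)$ into the general second-derivative expansion $\frac{d^2}{dt^2}\big|_0 A(u_t)=\int_\Omega\big(F_{zz}\dot u^2+2F_{zp_i}\dot u\,\partial_i\dot u+F_{p_ip_j}\partial_i\dot u\,\partial_j\dot u+F_z\ddot u+F_{p_i}\partial_i\ddot u\big)\,dx$ — which is derived just as \eqref{sveq} is, by the product and chain rules, being careful that the quadratic terms in $\dot u$ come from the second derivative of $F$ evaluated along the curve while the terms linear in $\ddot u$ come from the first derivative of $F$ — and noting $(\nabla u\cdot\eta)^2=(-\nabla u\cdot\eta)^2$, one arrives at the claimed formula for $\delta^2 A(u,\eta,\zeta)$.

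The main obstacle is the bookkeeping for $\ddot\Psi_0$: one must track the chain-rule contributions carefully to obtain the coefficient $2$ in front of $(\eta\cdot\nabla)\eta$ and the correct signs, since a sign or factor error there propagates into $X_0$. A secondary technical point worth a sentence is the justification of differentiation under the integral sign: since $\Phi_t$ and hence $\Psi_t$, and therefore $u_t$ and $\nabla u_t$, depend $C^3$-smoothly and jointly continuously on $(x,t)$ for $|t|$ small on the compact set $\overline\Omega$, and $F$ is smooth, the integrand and its first two $t$-derivatives are continuous in $(x,t)$ and bounded uniformly on $\overline\Omega\times\{|t|\le t_0\}$, so dominated convergence applies. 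I would also remark that the independence of the answer from the choice of smooth extension of $u$ is visible a posteriori, since the final formulas for $\delta A$ and $\delta^2 A$ involve only $u$, $\nabla u$, $D^2u$ and the vector fields $\eta,\zeta$ on $\overline\Omega$, not their values outside.
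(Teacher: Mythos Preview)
Your proposal is correct and follows essentially the same route as the paper: both expand $u_t=u\circ\Phi_t^{-1}$ to second order in $t$ (the paper quotes this as \eqref{ut_expand} from \cite{Le2}, while you derive it by differentiating $\Phi_t\circ\Psi_t=\mathrm{id}$), identify the coefficients $-\nabla u\cdot\eta$ and $X_0$, and then differentiate $\int_\Omega F(u_t,\nabla u_t)\,dx$ under the integral sign. Your write-up is slightly more explicit about the derivation of $\ddot\Psi_0$ and the dominated-convergence justification, but the argument is the same.
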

In view of \eqref{fveq}, it then immediately follows that:
\begin{cor}
\label{inner_rem}
Let $A$ be as in (\ref{basic}).
If $u\in C^3(\overline{\Omega})$ and $\eta, \zeta\in (C^{3}(\overline{\Omega}))^{N}$,
then one has
\begin{eqnarray}
&&\delta A(u,\eta,\zeta)= dA (u, -\nabla u\cdot \eta),\label{first}\\
&&\delta^2 A(u,\eta,\zeta)= d^2A (u, -\nabla u\cdot \eta) + dA (u, X_0),
\label{secondgen}
\end{eqnarray}
and if $u$ is a critical point of $A$, that is, if $dA (u,\varphi)=0$ for all $\varphi \in C^3 (\overline{\Omega})$, then 
$\delta^2 A(u,\eta,\zeta)$ is independent of $\zeta$. Moreover, in this case, 
\begin{equation}
\delta^2 A(u,\eta,\zeta)= d^2A (u, -\nabla u\cdot \eta)\quad\mbox{for all }\eta, \zeta\in (C^{3}(\overline{\Omega}))^{N}.\label{seconde}
\end{equation}
\end{cor}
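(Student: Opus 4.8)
The plan is to derive the two displayed formulas of Lemma~\ref{SIV_direct} by a direct Taylor expansion of $A(u\circ\Phi_t^{-1})$ in $t$, and then to read off Corollary~\ref{inner_rem} by comparison with \eqref{fveq}--\eqref{sveq}. First I would set $w_t:=u\circ\Phi_t^{-1}$ and compute its first two $t$-derivatives at $t=0$. Writing $\Psi_t:=\Phi_t^{-1}$, the key subcomputation is to expand $\Psi_t(x)=x+t\,a(x)+\tfrac{t^2}{2}b(x)+O(t^3)$ in terms of the data $\eta,\zeta$ of $\Phi_t$; composing $\Phi_t\circ\Psi_t=\mathrm{id}$ and matching orders in $t$ gives $a=-\eta$ and $b=-\zeta+2(\eta\cdot\nabla)\eta$ — this is the ``velocity/acceleration reversal'' identity and is where the vector field $X_0$ in \eqref{Xzero} originates. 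Then $\partial_t w_t|_{t=0}=\nabla u\cdot a=-\nabla u\cdot\eta$, and a second differentiation, using the chain rule and $\partial_t^2 w_t|_{t=0}=(D^2u\cdot a,a)+\nabla u\cdot b$, yields exactly $\partial_t^2 w_t|_{t=0}=(D^2u\cdot\eta,\eta)+\nabla u\cdot(2(\eta\cdot\nabla)\eta-\zeta)=X_0$. (One should be careful that these are pointwise identities valid on a neighborhood of $\overline{\Omega}$, which is why $u$ is extended; since only derivatives of $u$ at points of $\overline{\Omega}$ appear, the extension is immaterial.)

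Next I would feed $\varphi:=\partial_t w_t|_{t=0}=-\nabla u\cdot\eta$ into the definition of the first Gateaux variation to get $\delta A(u,\eta)=dA(u,-\nabla u\cdot\eta)$, which upon substituting \eqref{fveq} with $\varphi=-\nabla u\cdot\eta$ gives the first displayed formula of Lemma~\ref{SIV_direct}. For the second inner variation I would differentiate $A(w_t)=\int_\Omega F(w_t,\nabla w_t)\,dx$ twice in $t$ via the chain rule: the terms quadratic in $\partial_t w_t|_{t=0}$ reproduce the integrand of \eqref{sveq} with $\varphi=-\nabla u\cdot\eta$ (since $\partial_t\nabla w_t=\nabla\partial_t w_t$), while the terms linear in $\partial_t^2 w_t|_{t=0}=X_0$ reproduce the integrand of \eqref{fveq} with $\varphi=X_0$. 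This gives $\delta^2 A(u,\eta,\zeta)=d^2A(u,-\nabla u\cdot\eta)+dA(u,X_0)$, which is \eqref{secondgen}, and expanding both pieces via \eqref{fveq}--\eqref{sveq} gives the second displayed formula of the lemma. Corollary~\ref{inner_rem} then follows: \eqref{first} and \eqref{secondgen} are immediate, and if $u$ is critical then $dA(u,\varphi)=0$ for every test function, so in particular $dA(u,X_0)=0$; since $\zeta$ enters $\delta^2 A(u,\eta,\zeta)$ only through $X_0$ (only through its $-\nabla u\cdot\zeta$ summand) and that term is killed, $\delta^2A(u,\eta,\zeta)$ is independent of $\zeta$ and equals $d^2A(u,-\nabla u\cdot\eta)$, which is \eqref{seconde}.

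The main obstacle I anticipate is purely bookkeeping rather than conceptual: carefully tracking the second-order term of $\Phi_t^{-1}$ and making sure the $(D^2u\cdot\eta,\eta)$ and $2(\eta\cdot\nabla)\eta$ contributions appear with the right signs and coefficients, i.e.\ correctly deriving \eqref{Xzero}. A clean way to organize this is to first prove the inversion identity $\Phi_t^{-1}(x)=x-t\eta(x)+\tfrac{t^2}{2}\big(2(\eta\cdot\nabla)\eta-\zeta\big)(x)+O(t^3)$ as a standalone step by plugging into \eqref{map-deform2} and matching powers of $t$ (the $O(t^3)$-control being exactly the regularity built into the definition of $O(t^k)$ in the paper), and only then differentiate the composition. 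A secondary point worth a sentence is that all differentiations under the integral sign are justified because $F$ is smooth, $u\in C^3(\overline\Omega)$, and $\eta,\zeta\in(C^3(\overline\Omega))^N$, so the integrand $F(w_t,\nabla w_t)$ is $C^2$ in $t$ uniformly on $\overline\Omega$. Finally, the remark following Corollary~\ref{inner_rem} in the excerpt (that these computations apply verbatim to nonlocal functionals) requires no change to the argument: only the chain-rule structure $A(u\circ\Phi_t^{-1})$ is used, so one simply replaces $A$ by the nonlocal functional wherever it appears.
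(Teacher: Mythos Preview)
Your proposal is correct and follows essentially the same route as the paper: you derive the expansion $u_t(y)=u(y)-t\nabla u\cdot\eta+\tfrac{t^2}{2}X_0+O(t^3)$ (which the paper quotes as \eqref{ut_expand} from \cite{Le2}) and then differentiate $A(u_t)$ under the integral, after which the corollary is read off by matching the resulting expressions with \eqref{fveq}--\eqref{sveq}. The only difference is cosmetic: you spell out the inversion $\Phi_t^{-1}=x-t\eta+\tfrac{t^2}{2}(2(\eta\cdot\nabla)\eta-\zeta)+O(t^3)$ explicitly, whereas the paper simply cites the resulting expansion of $u_t$.
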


\begin{lemma}[Inner variations for velocity vector fields tangent to the domain boundary] 
\label{SIV_ch}
Let $A$ be as in (\ref{basic}).
Assume that $u\in C^3(\overline{\Omega})$. Suppose that $\eta\in (C^{3}(\overline{\Omega}))^{N}$
where $\eta\cdot \nu=0$ on $\p\Omega$. 
The first inner variation of $A$  at $u$ with respect to $\eta$ is
\begin{equation*}
 \delta A(u,\eta)=
\int_{\Omega}\left\{ F \div \eta - (\nabla_{\bf p} F, \nabla u\cdot \nabla\eta) \right\} dx.
\end{equation*}
The second inner variation of $A$  at $u$ with respect to $\eta$ and $Z:= (\eta\cdot\nabla)\eta$ is
\begin{multline*}
\delta^2 A(u,\eta, Z)=
\int_{\Omega}\left\{ FX - 2 (\nabla_{\bf p} F, \nabla u\cdot \nabla\eta) \,{\rm div} \,\eta - 2 (\nabla_{\bf p} F, Y) + F_{p_i p_j}(\nabla u\cdot\nabla \eta)^{i}(\nabla u\cdot\nabla \eta)^{j}\right\} dx.
\end{multline*}
where
\begin{equation}
\label{XY_eq}
X:= {\rm div}\, Z + ({\rm div}\, \eta)^2 -{\rm trace} \,(\nabla\eta)^2;~~Y= \frac{1}{2} \nabla u\cdot\nabla Z-(\nabla \eta)^2\cdot \nabla u.
\end{equation}
\end{lemma}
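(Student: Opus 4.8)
The plan is to derive both formulas from Lemma \ref{SIV_direct} by a change of variables, exploiting the hypothesis $\eta\cdot\nu=0$ on $\partial\Omega$ to control the boundary behavior of the flow. First I would observe that since $\eta$ is tangent to $\partial\Omega$, the flow it generates (or more precisely the map $\Phi_t$ in \eqref{map-deform2} built from it, which can be chosen as the time-$t$ flow so that $\zeta=(\eta\cdot\nabla)\eta=Z$) maps $\Omega$ diffeomorphically onto $\Omega$ for $|t|$ small. Hence in $A(u\circ\Phi_t^{-1})=\int_\Omega F(u(\Phi_t^{-1}(x)),\nabla(u\circ\Phi_t^{-1})(x))\,dx$ I may substitute $x=\Phi_t(y)$, turning the integral into $\int_\Omega F\big(u(y),\nabla(u\circ\Phi_t^{-1})(\Phi_t(y))\big)\,|\det D\Phi_t(y)|\,dy$. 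The chain rule gives $\nabla(u\circ\Phi_t^{-1})(\Phi_t(y)) = (D\Phi_t(y))^{-T}\nabla u(y)$, so the whole $t$-dependence is now packaged into the two explicit matrix quantities $J_t:=\det D\Phi_t$ and $M_t:=(D\Phi_t)^{-T}$, with the domain of integration fixed.

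Next I would compute the first- and second-order Taylor expansions in $t$ of $J_t$ and $M_t$ at $t=0$, using $D\Phi_t = I + tD\eta + \tfrac{t^2}{2}DZ + O(t^3)$. Standard identities give $\frac{d}{dt}\big|_0 J_t=\operatorname{div}\eta$, and for the second derivative one gets $\frac{d^2}{dt^2}\big|_0 J_t = \operatorname{div}Z + (\operatorname{div}\eta)^2 - \operatorname{trace}(\nabla\eta)^2$, which is exactly the quantity $X$ in \eqref{XY_eq}; here I would use Jacobi's formula $J_t' = J_t\,\operatorname{trace}(D\Phi_t^{-1}\dot{D\Phi_t})$ and differentiate once more. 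Similarly, from $M_t = I - tD\eta^T + t^2\big(\tfrac12 DZ^T\,(-1)?\big)+\dots$ — more carefully, expanding $(I+tD\eta+\tfrac{t^2}{2}DZ)^{-1} = I - tD\eta + t^2\big((D\eta)^2 - \tfrac12 DZ\big)+O(t^3)$ and transposing — I obtain $\frac{d}{dt}\big|_0(M_t\nabla u) = -(\nabla u\cdot\nabla\eta)$ (in the paper's index notation $(\nabla u\cdot\nabla\eta)^i = u_j\,\partial_i\eta^j$, i.e.\ $(D\eta)^T\nabla u$) and $\frac{d^2}{dt^2}\big|_0(M_t\nabla u) = 2(\nabla\eta)^2\cdot\nabla u - \nabla u\cdot\nabla Z = -2Y$. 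Then I expand $F\big(u, M_t\nabla u\big)J_t$ to second order in $t$ by the multivariable chain rule, collect the $O(t)$ terms to read off $\delta A$, and collect the $O(t^2)$ terms — which include $F_{p_ip_j}$ times the product of the two first-order variations of $M_t\nabla u$, plus $\nabla_{\bf p}F$ paired with its second-order variation, plus cross terms between $\nabla_{\bf p}F\cdot(\text{first variation})$ and $J_t'$, plus $F\cdot J_t''$ — to read off $\delta^2 A$. After multiplying out one lands precisely on the stated expressions with $X$ and $Y$ as in \eqref{XY_eq}.

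There is a consistency cross-check I would include: formula \eqref{seconde} of Corollary \ref{inner_rem} is derived from Lemma \ref{SIV_direct} under no boundary hypothesis, so both lemmas must agree as identities in $\eta$; comparing the $\operatorname{div}$-form here with the $-\nabla u\cdot\eta$ Gateaux form there amounts to an integration by parts that is legitimate precisely because $\eta\cdot\nu=0$ kills the boundary term — this both validates the computation and clarifies why the tangency hypothesis is the right one. The main obstacle is purely bookkeeping: getting the second-order expansion of $(D\Phi_t)^{-T}\nabla u$ right, in particular keeping track of the $(D\eta)^2$ term versus the $\tfrac12 DZ$ term and matching the paper's index conventions for $(\nabla u\cdot\nabla\eta)$ and $(\nabla\eta)^2$, so that the quadratic term in $F_{p_ip_j}$ comes out as $(\nabla u\cdot\nabla\eta)^i(\nabla u\cdot\nabla\eta)^j$ and the linear-in-$F_{\bf p}$ term contributes exactly $-2(\nabla_{\bf p}F,Y)$ and $-2(\nabla_{\bf p}F,\nabla u\cdot\nabla\eta)\operatorname{div}\eta$. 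No delicate analysis is needed beyond that, since everything is smooth and the change of variables is exact.
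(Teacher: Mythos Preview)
Your proposal is correct and follows essentially the same approach as the paper: construct the flow of $\eta$ (the paper does this explicitly via the ODE $\partial_t\Psi=\eta(\Psi)$, which is your ``time-$t$ flow''), use tangency to ensure $\Phi_t(\Omega)=\Omega$, change variables in $A(u\circ\Phi_t^{-1})$, and expand $\det D\Phi_t$ and $(D\Phi_t)^{-1}$ to second order to read off the stated formulas. Your opening line about deriving things ``from Lemma~\ref{SIV_direct}'' is a bit misleading since neither you nor the paper actually use that lemma here---indeed the paper remarks (Remark~\ref{weird}) that directly reconciling the two formulas does not seem easy---but the computation you outline is exactly the paper's.
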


\begin{rem}
\label{weird}
In light of the fact that the formula for the second inner variation in Lemma \ref{SIV_ch} is a special case of the general second inner variation 
$\delta^2 A(u,\eta,\zeta)$ in the case where  $\eta\cdot \nu=0$ on $\p\Omega$ and $\zeta= (\eta\cdot\nabla)\eta$, it follows that if one imposes this boundary condition on $\eta$ and this choice of $\zeta$ in the formula for $\delta^2 A(u,\eta,\zeta)$ given in Lemma \ref{SIV_direct}, then it must be equivalent to the formula given in Lemma \ref{SIV_ch}. We note, however, that it does not seem easy to directly verify this equivalence.
\end{rem}

\begin{rem}
\label{rederive}
We would like to point out that the formulae for inner variations in Lemmas \ref{SIV_direct} and \ref{SIV_ch} already appeared in the proof of \cite[Proposition 2.1]{Le2} for compactly supported vector fields
$\eta$ and $\zeta$. The proof of Lemma \ref{SIV_direct} here follows the same line of argument as in \cite{Le2}. Since it is short and to avoid confusion when adapting to our general vector fields, we include it for the reader's convenience. The proof of Lemma \ref{SIV_ch} is a bit different, utilizing the ODE \eqref{ode} to build the diffeomorphism of $\Omega$. 
\end{rem}

The rest of this section will be devoted to proving Lemmas \ref{SIV_direct} and \ref{SIV_ch}.
\begin{proof}[Proof of Lemma \ref{SIV_direct}]
Let $u_t(y) = u(\Phi_t^{-1}(y))$
where
$$\Phi_{t} (x) = x + t\eta(x) + \frac{t^2}{2}\zeta(x).$$
The formulae are based on the following formula (see \cite[equation (2.16)]{Le2})
\begin{equation}u_t(y) = u (y) -t\nabla u \cdot \eta + \frac{t^2}{2} X_0 + O(t^3).
\label{ut_expand}
\end{equation}

We observe, using (\ref{inner1defn}), (\ref{innerdefn2}) and (\ref{ut_expand}), that the first and second inner variations are equal to the first and second derivatives, respectively, of the following function at $0$:
$$\mathcal{A}_{1}(t) = \int_{\Omega} F(u-t \nabla u\cdot\eta + \frac{t^2}{2}X_0, \nabla u - t\nabla (\nabla u\cdot\eta) +\frac{t^2}{2}\nabla X_0) dx.$$
We compute
$$\mathcal{A}_{1}^{'}(t) =\int_{\Omega}\left [F_z (-\nabla u\cdot\eta + t X_0)-F_{p_i} (\frac{\partial}{x_i}(\nabla u\cdot \eta)- t \frac{\partial}{x_i} X_0)\right]dx$$
and
\begin{multline*}\mathcal{A}_{1}^{''}(t) = \int_{\Omega} \left[F_{zz} (-\nabla u\cdot\eta + t X_0)^2-2F_{zp_i} (\frac{\partial}{x_i}(\nabla u\cdot \eta)- t \frac{\partial}{x_i} X_0)(-\nabla u\cdot\eta + t X_0) \right]dx\\+ \int_{\Omega}\left[F_{p_i p_j} (\frac{\partial}{x_i}(\nabla u\cdot \eta)- t \frac{\partial}{x_i} X_0)(\frac{\partial}{ x_j}(\nabla u\cdot \eta)- t \frac{\partial}{x_j} X_0)\right]dx+ \int_{\Omega} \left[F_{z} X_0 + F_{p_i} \frac{\partial}{x_i} X_0\right].
\end{multline*}
It follows that
\begin{equation*}
 \delta A(u,\eta)=
\mathcal{A}_{1}^{'}(0)= \int_{\Omega} \left[F_z (-\nabla u\cdot\eta ) + F_{p_i} (\frac{\partial}{x_i}(-\nabla u\cdot \eta))\right]
\end{equation*}
and 
\begin{multline*} \delta^2 A(u,\eta,\zeta) =\mathcal{A}_{1}^{''}(0) =\int_{\Omega} \left[F_{zz} (\nabla u\cdot \eta)^2 + 2 F_{zp_i} (\nabla u\cdot\eta)\frac{\partial}{x_i}(\nabla u\cdot \eta)\right] dx\\+ \int_{\Omega}\left[F_{p_i p_j} \frac{\partial}{x_i} (\nabla u \cdot \eta)\frac{\partial}{x_j}(\nabla u \cdot \eta)\right] dx+ \int_{\Omega} \left[F_{z} X_0 + F_{p_i} \frac{\partial}{x_i}X_0\right]dx.
\end{multline*}

\end{proof}

\begin{proof}[Proof of Lemma \ref{SIV_ch}] 
Suppose that $\eta\in (C^{3}(\overline{\Omega}))^{N}$
where $\eta\cdot \nu=0$ on $\p\Omega$. 
 Then for $\tau>0$ small, we 
let $\Psi: \Omega\times (-\tau,\tau)\rightarrow\Omega$ denote the unique solution to the following system of ordinary differential equations
\begin{equation}
\frac{\p \Psi}{\p t}(x, t)= \eta (\Psi(x, t)), \quad\Psi(x, 0)= x. \label{ode}
\end{equation}
Then we have the expansion
\begin{equation}\Psi(x, t)= x + t\eta (x)+ \frac{t^2}{2} Z(x) + O(t^3)\quad\mbox{where}\quad
Z:=(\eta\cdot\nabla )\eta.
\label{Psi_exp}
\end{equation}

Letting $\Phi_t(x) := \Psi(x, t)$
we observe that for all $t$ such that $\abs{t}<\tau$, the mapping $x\mapsto\Psi(x,t)$ is a diffeomorphism of $\Omega$ into itself, using the tangency of $\eta$ along the boundary.

From \eqref{innerdefn2} and \eqref{Psi_exp} we have
\[\delta^{2} A(u,\eta,Z) = \left.\frac{d^2}{dt^2}\right\rvert_{t=0} A(u\circ\Phi_t^{-1})=  \left.\frac{d^2}{dt^2}\right\rvert_{t=0} A(u_t),\quad\text{and } \quad\delta A(u,\eta)= \left.\frac{d}{dt}\right\rvert_{t=0} A(u_t)\]
where
$u_t(y) := u(\Phi_t^{-1}(y)).$
By the change of variables $y=\Phi_{t}(x)$ and using $\Phi_t^{-1}(\Omega)=\Omega$, we have 
\begin{eqnarray}
 A(u_{t}) &=& \int_{\Phi_t^{-1}(\Omega)} F(u(x),\nabla u\cdot\nabla \Phi_{t}^{-1}(\Phi_{t}(x)) \abs{\text{det}\nabla \Phi_{t}(x)}dx\nonumber
\\
&=& \int_{\Omega} F(u(x),\nabla u\cdot\nabla \Phi_{t}^{-1}(\Phi_{t}(x)) \abs{\text{det}\nabla \Phi_{t}(x)}dx. 
\label{rewrite_E}
\end{eqnarray}
We need to expand the right-hand side of the above formula up to the second power in $t$. 
Note that
\begin{equation*}
 \nabla\Phi_{t}^{-1}(\Phi_{t}(x)) = [I + t\nabla\eta (x) +\frac{t^2}{2}\nabla Z(x) + O(t^3)]^{-1} = I - t\nabla\eta -\frac{t^2}{2}\nabla Z(x)+ t^{2}(\nabla\eta)^2 + O(t^3),
\end{equation*}
hence
\begin{equation}
 \nabla u\cdot\nabla \Phi_{t}^{-1}(\Phi_{t}(x)) = \nabla u - t\nabla u\cdot\nabla\eta -\frac{t^2}{2}\nabla u\cdot\nabla Z(x)+ t^{2}(\nabla\eta)^2\cdot\nabla u + O(t^3).
\label{u1_expand}
\end{equation}
We then use the following identity for matrices $A$ and $B$
\begin{equation*}
 \text{det}(I + tA + \frac{t^{2}}{2}B) = 1 + t\,\text{trace}(A) + \frac{t^2}{2}[\text{trace}(B) + (\text{trace}(A))^2 - \text{trace}(A^2)] + O(t^3).
\end{equation*}
Therefore, since for $\abs{t}$ sufficiently small, $\text{det}\nabla\Phi_{t}(x)>0$ and we find
\begin{multline}
\abs{\text{det}\nabla\Phi_{t}(x)}=\text{det}\nabla\Phi_{t}(x) =\text{det} (I + t\nabla \eta (x) +\frac{t^2}{2}\nabla Z)\\= 1 +  t\,\text{div} \,\eta + \frac{t^2}{2}[ \text{div}\,Z + (\text{div}\eta)^2 - \text{trace}((\nabla\eta)^2)] + O(t^3).
\label{det_expand}
\end{multline}
Plugging (\ref{u1_expand}) and (\ref{det_expand}) into (\ref{rewrite_E}), we find that
\begin{equation}
\label{inner_formu}
\delta A(u,\eta)= \left.\frac{d}{dt}\right\rvert_{t=0} \int_{\Omega} \hat F(x, t) dx~\text{and } \delta^2A(u,\eta,Z)= \left.\frac{d^2}{dt^2}\right\rvert_{t=0} \int_{\Omega} \hat F(x, t) dx
\end{equation}
where
$$\hat F(x,t)=F(u, \nabla u - t\nabla u\cdot\nabla\eta - t^2 Y) (1 + t\,{\rm div}\, \eta + \frac{t^2}{2}X).$$
Here $X$ and $Y$ are defined as in (\ref{XY_eq}).

We compute
\begin{equation}
\begin{split}
\label{Fhat_d}
\frac{\partial}{\partial t} \hat F(x,t) =-F_{p_i}(u, \nabla u - t\nabla u\cdot\nabla\eta - t^2 Y) (\frac{\partial}{x_i}\eta^j u_j + 2t Y^i) \\ + F\,{\rm div}\, \eta + \left( \frac{d}{dt}F(u, \nabla u - t\nabla u\cdot\nabla\eta - t^2 Y)\right) t \,{\rm div}\, \eta
+ t FX + \frac{t^2}{2} \frac{d}{dt} (FX).
\end{split}
\end{equation}
The formula for the first inner variation $\delta A(u,\eta)$ easily follows from (\ref{inner_formu}) and (\ref{Fhat_d}). For the second inner variation, we note that
\begin{multline}
\label{Fhat_dd}
\left.\frac{\p^2}{\p t^2}\right\rvert_{t=0} \hat F(x,t)= F_{p_i p_k} (\frac{\partial}{x_i}\eta^j u_j)(\frac{\partial}{x_k}\eta^l u_l) - 2F_{p_i} Y^i \\+ 2 \frac{d}{dt}F(u, \nabla u - t\nabla u\cdot\nabla\eta - t^2 Y)\,{\rm div}\,\eta\mid_{t=0} 
+ FX\\
= FX - 2 (\nabla_{\bf p} F, \nabla u\cdot \nabla\eta)\,{\rm div}\, \eta - 2 (\nabla_{\bf p} F, Y) + F_{p_i p_j}(\nabla u\cdot\nabla \eta)^{i}(\nabla u\cdot\nabla \eta)^{j}.
\end{multline}
Therefore, from (\ref{inner_formu}) and (\ref{Fhat_dd}), we find that
the second inner variation $\delta^{2} A(u,\eta, Z)$ is given by
\begin{multline*}\delta^{2} A(u,\eta, Z)=
\left.\frac{d^2}{dt^2}\right\rvert_{t=0} \int_{\Omega} \hat F(x, t) dx=  \int_{\Omega} \left.\frac{\p^2}{\p t^2}\right\rvert_{t=0} \hat F(x,t) dx
\\= \int_{\Omega}\left\{ FX - 2 (\nabla_{\bf p} F, \nabla u\cdot \nabla\eta) \,{\rm div}\,\eta - 2 (\nabla_{\bf p} F, Y) + F_{p_i p_j}(\nabla u\cdot\nabla \eta)^{i}(\nabla u\cdot\nabla \eta)^{j}\right\} dx.
\end{multline*}
\end{proof}

\section{Passage to the limit in the inner variations of the Allen-Cahn functional}
\label{AC_sec}
In this section we will apply the formulae established in the previous section to the case of the Allen-Cahn or Modica-Mortola sequence of functionals 
\begin{equation} E_{\varepsilon}(u)=\int_{\Omega}\left(\frac{\varepsilon \abs{\nabla u}^2}{2} +\frac{(1-u^2)^2}{2\varepsilon}\right) dx,\label{MM}\end{equation} 
for $\e>0$, where $u: \Omega\subset\RR^N\rightarrow \RR$, $N\geq 2$. Thus, we specialize to the case where $F(z,{\bf p})=\frac{\e}{2}\abs{{\bf p}}^2+\frac{(1-z^2)^2}{2\varepsilon}$ in \eqref{basic}. These functionals, which in particular arise in the
theory of phase transitions \cite{AC}, are known to $\Gamma$-converge in $L^1(\Omega)$ to a multiple of the perimeter functional
$E$ defined by
\begin{equation*}
E(u_0)=\left\{
 \begin{alignedat}{1}
   \frac{1}{2}\int_{\Omega}|\nabla u_0| ~&~ \text{if} ~u_0\in BV (\Omega, \{1, -1\}), \\\
\infty~&~ \text{otherwise},
 \end{alignedat} 
  \right.
  \end{equation*}
  (\cite{MM}).
  More precisely, $E_\e$ $\Gamma$-converges in $L^1(\Omega)$ to $\frac{4}{3}E.$
  
  For a function $u_0$ of bounded variation taking values $\pm 1$, i.e. $u_0\in BV (\Omega, \{1, -1\})$,  $|\nabla u_0|$ denotes the total variation of the vector-valued measure $\nabla u_0$ (see \cite{Giusti}), and $\Gamma:= \partial\{x\in \Omega: u_0(x)=1\}\cap \Omega$ denotes the interface separating 
the $\pm1$ phases of $u_0$. If $\Gamma$ is sufficiently regular, say $C^1$, then $E(u_0)=\mathcal{H}^{N-1}(\Gamma)$ and hence we identify 
\begin{equation}
\label{E_defn}
E(u_0)\equiv E(\Gamma)=\mathcal{H}^{N-1}(\Gamma)
\end{equation} where $\mathcal{H}^{N-1}$ denotes $(N-1)$-dimensional Hausdorff measure. Throughout, we will denote by ${\bf n} = (n_{1},\cdots,n_{N})$ the unit normal to $\Gamma$ pointing out of the region $\{x\in\Omega:\,u_0(x)=1\}.$ 

Though we will not use the specific properties of $\Gamma$-convergence in this article, we recall that this convergence of $E_\e$ to $\frac{4}{3}E$ consists of two conditions: a liminf inequality and the existence of a recovery sequence.
For reader's convenience and for later reference, we give the definition below.
\begin{definition} [$\Gamma$-convergence]
\label{G_defn}
We say that a sequence of functionals $E_\e$ $\Gamma$-converges in the $L^1(\Omega)$ topology to the functional $\frac{4}{3}E$ if 
for any $u\in L^1(\Omega)$ one has the following two conditions:
\begin{myindentpar}{1cm}
(i) (Liminf inequality) If a sequence $\{v_\e\}$ converges to
$u$ in $L^1(\Omega)$, then
$$\liminf_{\e\to 0}E_\e(v_\e)\geq \frac{4}{3}E(u), $$
 (ii) (Existence of a recovery sequence) There exists a sequence $\{w_\e\}\subset L^1(\Omega)$ converging to $u$ such that $$\lim_{\e\to 0}E_\e(w_\e)=\frac{4}{3}E(u).$$
  \end{myindentpar}
  \end{definition}
This convergence, when accompanied by a compactness condition on energy-bounded sequences, guarantees that global minimality passes to the limit. In this article, however, we will be more concerned with the passage of stability in the limit $\e\to 0.$

The first variation of $E$, defined by (\ref{E_defn}), at $\Gamma$ with respect to a smooth velocity vector field $\eta$ is given by
\begin{eqnarray}
\label{FVE}
 \delta E(\Gamma,\eta)&: =& \left.\frac{d}{dt}\right\rvert_{t=0}  \mathcal{H}^{N-1}(\Phi_t(\Gamma))= 
 \int_{\Gamma}\text{div}^{\Gamma}\eta\mathcal{H}^{N-1},
\end{eqnarray}
and 
the second variation of $E$ at $\Gamma$ with respect to smooth velocity and acceleration vector fields $\eta$ and $\zeta$ is given by
\begin{eqnarray}
\label{SVE}
 \delta^{2}E(\Gamma,\eta,\zeta)&: =& \left.\frac{d^2}{dt^2}\right\rvert_{t=0}  \mathcal{H}^{N-1}(\Phi_t(\Gamma))\nonumber\\ &=& 
 \int_{\Gamma}\left\{ \text{div}^{\Gamma}\zeta + (\text{div}^{\Gamma}\eta)^2 + \sum_{i=1}^{N-1}\abs{(D_{\tau_{i}}\eta)^{\perp}}^2 -
 \sum_{i,j=1}^{N-1}(\tau_{i}\cdot D_{\tau_{j}}\eta)(\tau_{j}\cdot D_{\tau_{i}}\eta)\right\}d\mathcal{H}^{N-1};
\end{eqnarray}
see \cite[Chapter 2]{Simon}.
Here 
$\Phi_t$ is 
given by (\ref{map-deform2}), $\text{div}^{\Gamma}\varphi$ denotes the tangential divergence of $\varphi$ on $\Gamma$, and for each point $x\in\Gamma$, $\{\tau_{1}(x),\cdots,\tau_{N-1}(x)\}$ is any orthonormal basis for the tangent space $T_{x}(\Gamma)$. Further, for each $\tau\in T_{x}(\Gamma)$, $D_{\tau} \eta$ is the directional derivative and the normal part of $D_{\tau_{i}} \eta$ is denoted by 
 $(D_{\tau_{i}} \eta)^{\perp} := D_{\tau_{i}} \eta -\sum_{j=1}^{N-1}(\tau_{j}\cdot D_{\tau_{i}} \eta)\tau_{j}. $ We point that out for a hypersurface, there are no distinct notions of first or second {\it inner} variation so while we chose the notation $ \delta E(\Gamma,\eta)$ and $\delta^{2}E(\Gamma,\eta,\zeta)$ we could just as well have used $ dE(\Gamma,\eta)$ and $d^{2}E(\Gamma,\eta,\zeta)$.

For later use, we also record the following (see  \cite[formula (12.39)]{SZ2}):
\begin{thm} [Second variation of the area functional \cite{SZ2}] 
\label{PI_ineq}
Suppose that $\Gamma\subset\Omega$ is a smooth hypersurface with mean curvature $H$. Suppose further that, $\overline{\Gamma}$ is $C^2$ and  $\partial\Gamma$ meets $\partial\Omega$ orthogonally.
Then for any smooth vector field $\eta:\overline{\Omega}\to \R^N$ that is tangent to $\partial\Omega$ with $\eta=\xi\,{\bf n}$ and
 $({\bf n},{\bf n}\cdot\nabla\eta) =0$  on $\Gamma$ for some smooth $\xi:\Gamma\to\R$, and for $Z:= (\eta \cdot \nabla) \eta$, we have
\begin{multline}
\delta^{2}E(\Gamma,\eta, Z)  =\delta^{2}E(\Gamma,\xi):  = \int_{\Gamma} \left(|\nabla_{\Gamma}\xi|^2 + (n-1)^2H^2\xi^2 -|A_{\Gamma}|^2\xi^2\right) d\mathcal{H}^{N-1} \\- \int_{\partial\Gamma\cap\partial\Omega} A_{\partial\Omega}({\bf n}, {\bf n})\xi^2 d\mathcal{H}^{N-2}.\label{sziden}
\end{multline}
Here $A_{\Gamma}$ and $A_{\partial\Omega}$ denote the second fundamental form of $\Gamma$ and $\p\Omega$ respectively. 
\end{thm}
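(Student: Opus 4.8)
The plan is to derive \eqref{sziden} by specializing the general second variation formula \eqref{SVE} to the acceleration field $\zeta = Z = (\eta\cdot\nabla)\eta$ and a velocity field $\eta$ with $\eta = \xi\,{\bf n}$ and $({\bf n},{\bf n}\cdot\nabla\eta) = 0$ on $\Gamma$ and $\eta\cdot\nu = 0$ on $\partial\Omega$. Three of the four integrand terms in \eqref{SVE} are pointwise on $\Gamma$ and depend only on $\eta|_\Gamma = \xi\,{\bf n}$, so I would first compute these. Writing $D_{\tau_i}(\xi\,{\bf n}) = (D_{\tau_i}\xi)\,{\bf n} + \xi\,D_{\tau_i}{\bf n}$ and using that $D_{\tau_i}{\bf n}$ is tangential (since $\abs{\bf n}\equiv 1$) with $\tau_j\cdot D_{\tau_i}{\bf n}$ the components of $A_\Gamma$, one obtains $(D_{\tau_i}\eta)^\perp = (D_{\tau_i}\xi)\,{\bf n}$, hence $\sum_i\abs{(D_{\tau_i}\eta)^\perp}^2 = \abs{\nabla_\Gamma\xi}^2$; also $\tau_i\cdot D_{\tau_j}\eta = \xi\,(A_\Gamma)_{ij}$, hence $\sum_{i,j}(\tau_i\cdot D_{\tau_j}\eta)(\tau_j\cdot D_{\tau_i}\eta) = \xi^2\abs{A_\Gamma}^2$ by the symmetry of $A_\Gamma$; and $\text{div}^{\Gamma}\eta = \xi\,\text{div}^{\Gamma}{\bf n} = (N-1)H\xi$, hence $(\text{div}^{\Gamma}\eta)^2 = (N-1)^2H^2\xi^2$. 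Inserting these into \eqref{SVE} already produces every term on the right-hand side of \eqref{sziden} except the contribution of $\text{div}^{\Gamma}Z$.

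It then remains to show that $\int_\Gamma \text{div}^{\Gamma}Z\,d\mathcal{H}^{N-1} = -\int_{\partial\Gamma\cap\partial\Omega}A_{\partial\Omega}({\bf n},{\bf n})\,\xi^2\,d\mathcal{H}^{N-2}$. Here I would apply the tangential divergence theorem on $\overline{\Gamma}$: for a $C^1$ vector field $V$ near $\Gamma$, $\int_\Gamma\text{div}^{\Gamma}V\,d\mathcal{H}^{N-1} = \int_\Gamma (N-1)H\,(V\cdot{\bf n})\,d\mathcal{H}^{N-1} + \int_{\partial\Gamma}V\cdot\mu\,d\mathcal{H}^{N-2}$, with $\mu$ the outward unit conormal of $\partial\Gamma$ in $\Gamma$. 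For the bulk term, I would observe that on $\Gamma$ one has $\eta\cdot\nabla = \xi\,{\bf n}\cdot\nabla$, so $Z = (\eta\cdot\nabla)\eta = \xi\,({\bf n}\cdot\nabla)\eta$ there, whence $Z\cdot{\bf n} = \xi\,({\bf n},{\bf n}\cdot\nabla\eta) = 0$ by hypothesis, so the bulk term drops. For the boundary term, orthogonality of $\Gamma$ and $\partial\Omega$ along $\partial\Gamma$ forces the outward conormal $\mu$ to coincide with $\nu$; and since $\eta$ is tangent to $\partial\Omega$, differentiating $\eta\cdot\nu\equiv 0$ in the tangential direction $\eta$ yields $Z\cdot\nu = ((\eta\cdot\nabla)\eta)\cdot\nu = -\eta\cdot((\eta\cdot\nabla)\nu) = -A_{\partial\Omega}(\eta,\eta) = -\xi^2 A_{\partial\Omega}({\bf n},{\bf n})$ on $\partial\Gamma$, using $\eta = \xi\,{\bf n}$ there. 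Combining this with the first paragraph gives \eqref{sziden}.

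The only real difficulty I foresee is bookkeeping: one must fix mutually consistent sign conventions for $\text{div}^{\Gamma}{\bf n}$, for $A_\Gamma$ and $A_{\partial\Omega}$, and for the orientation of $\mu$, and one must check that the a priori ``transverse'' data entering $Z|_\Gamma$ (which involves $\nabla\eta$ in directions off $\Gamma$) is fully controlled by the two interface/boundary constraints — the condition $({\bf n},{\bf n}\cdot\nabla\eta) = 0$ on $\Gamma$ killing the normal component of $Z$ there, and $\eta\cdot\nu = 0$ on $\partial\Omega$ converting the conormal flux of $Z$ into a second-fundamental-form term of $\partial\Omega$. One also verifies that $C^2$ regularity of $\overline{\Gamma}$ together with $\eta\in C^3$ (so that $Z\in C^2$) suffices to justify the tangential divergence theorem with boundary, which is standard.
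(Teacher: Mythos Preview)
Your argument is correct and complete. Note, however, that the paper does not actually prove this statement: it is recorded as \cite[formula (12.39)]{SZ2} with the remark that the stability hypothesis in \cite{SZ2} is only used there to guarantee regularity, which is assumed directly here. So there is no ``paper's own proof'' to compare against.

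That said, your derivation is exactly the standard route one takes to obtain \eqref{sziden} from the abstract second-variation formula \eqref{SVE}: compute the three pointwise integrand terms using $\eta|_\Gamma=\xi\,{\bf n}$, then convert $\int_\Gamma\mathrm{div}^\Gamma Z$ into the boundary integral via the tangential divergence theorem, using $Z\cdot{\bf n}=\xi\,({\bf n},{\bf n}\cdot\nabla\eta)=0$ on $\Gamma$ to kill the bulk term and the identity $Z\cdot\nu=-A_{\partial\Omega}(\eta,\eta)$ (from differentiating $\eta\cdot\nu\equiv 0$ along $\partial\Omega$ in the tangential direction $\eta$) together with $\mu=\nu$ from orthogonality to produce the $A_{\partial\Omega}$ term. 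Your caveats about sign conventions and about the transverse information in $Z|_\Gamma$ being fully absorbed by the two hypotheses are well placed; these are precisely the points where the computation can go wrong if one is careless, but as you have organized it the argument is sound.
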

\begin{rem}
The derivation of \cite[formula (12.39)]{SZ2} uses the stability of $\Gamma$ only in order to assert the necessary regularity to carry out the calculation. Here, as we do throughout the article, we assume smoothness of $\overline{\Gamma}$ so a stability assumption is not needed. 
\end{rem}

In a previous paper \cite{Le}, the first author studied the relationship between the second inner variations of $\{E_\e\}$ and the second variation of the $\Gamma$-limit, $\frac{4}{3} E(u_0)$.
While the first inner variations of $E_\e$ converge to the first variation of $E_0$, it was shown in \cite{Le} that an extra positive discrepancy term emerges in the limit of the second inner variation. More precisely, if $u_\e\in C^2(\Omega), u_{\e}\rightarrow u_0\in BV (\Omega, \{1, -1\})$ with a $C^2$ interface $\Gamma$ and $\lim_{\e\rightarrow 0} E_{\e}(u_{\e})= \frac{4}{3}E (u_0)$, then for all smooth vector fields $\eta,\zeta\in (C_{c}^{1}(\Omega))^{N}$, it was found in \cite[Theorem 1.1]{Le} that
\begin{equation*}
\lim_{\varepsilon\rightarrow 0}\delta^{2} E_{\varepsilon}(u_{\varepsilon},\eta,\zeta) = \frac{4}{3}\left\{
 \delta^{2}E(\Gamma,\eta,\zeta) + \int_{\Gamma} ({\bf n},{\bf n}\cdot\nabla\eta)^2d\mathcal{H}^{N-1}\right\}.\end{equation*}

With the aim of studying the asymptotic behavior of Allen-Cahn critical points and linearizations subject the natural Neumann boundary conditions, we now establish the same type of result {\it without} the assumption of compact support on the vector fields $\eta$ and $\zeta$:
 \begin{thm}[Limits of the inner variations of the Allen-Cahn functional]
 \label{thm-AC2}
Let $\{u_{\e}\}\subset C^3 (\overline{\Omega})$ be a sequence of functions that converges in $L^1(\Omega)$ to a function $u_0\in BV (\Omega, \{1, -1\})$ with an interface $\Gamma=\partial\{u_0=1\}\cap\Omega$ having the property that $\overline{\Gamma}$ is $C^2$. Assume that
 $\lim_{\varepsilon \rightarrow 0} E_{\varepsilon}(u_{\varepsilon}) = 
\frac{4}{3}E(\Gamma).$
Then, for all smooth vector fields $\eta\in (C^{3}(\overline{\Omega}))^{N}$ with $\eta\cdot \nu=0$ on $\partial\Omega$ and for $Z:= (\eta \cdot\nabla) \eta$, we 
have
\begin{equation*}\lim_{\varepsilon\rightarrow 0}\delta E_{\varepsilon}(u_{\varepsilon}, \eta) = \frac{4}{3}\delta E(\Gamma,\eta) 
\end{equation*} 
and
\begin{equation*}\lim_{\varepsilon\rightarrow 0}\delta^{2} E_{\varepsilon}(u_{\varepsilon}, \eta, Z) = \frac{4}{3}\left\{\delta^{2}E(\Gamma,\eta, Z) + \int_{\Gamma} ({\bf n},{\bf n}\cdot\nabla\eta)^2 d\mathcal{H}^{N-1}\right\}.
\end{equation*} 
\end{thm}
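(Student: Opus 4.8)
The plan is to evaluate the two inner variations explicitly via Lemma~\ref{SIV_ch}, pass to the limit using the Reshetnyak-type Theorem~\ref{Sp_thm}, and finish with a purely algebraic pointwise identity on $\Gamma$. First I would specialize Lemma~\ref{SIV_ch} to $F(z,\mathbf p)=\frac{\e}{2}|\mathbf p|^2+\frac{(1-z^2)^2}{2\e}$, for which $\nabla_{\mathbf p}F=\e\mathbf p$, $F_{p_ip_j}=\e\delta_{ij}$, and---crucially---$F_z$ (which is of order $\e^{-1}$) does not appear in those formulas. Writing $\nu_\e:=\nabla u_\e/|\nabla u_\e|$ and $Z:=(\eta\cdot\nabla)\eta$, this exhibits $\delta E_\e(u_\e,\eta)$ and $\delta^2 E_\e(u_\e,\eta,Z)$ as finite sums of integrals of just two shapes,
\[
\int_\Omega F(u_\e,\nabla u_\e)\,h(x)\,dx \qquad\text{and}\qquad \int_\Omega \e\,|\nabla u_\e|^2\,g(x,\nu_\e)\,dx,
\]
where $h\in C(\overline\Omega)$ and, for each $x$, $g(x,\cdot)$ is a quadratic form on the unit sphere (in particular even) with coefficients in $C(\overline\Omega)$; both $h$ and $g$ are built only from $\eta,Z$ and their derivatives (up to first order for $\delta E_\e$, up to second for $\delta^2 E_\e$). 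For instance $\delta E_\e(u_\e,\eta)=\int_\Omega\big(F\operatorname{div}\eta-\e|\nabla u_\e|^2(\nu_\e,\nu_\e\cdot\nabla\eta)\big)\,dx$. The task thus reduces to computing $\lim_{\e\to0}$ of these two model integrals.

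Next I would set up the measure-theoretic limit. Put $\psi(t):=\int_0^t|1-s^2|\,ds$, a strictly increasing function with $\psi(1)-\psi(-1)=\tfrac43$, so that $\nabla\psi(u_0)=-\tfrac43\,\mathbf n\,\mathcal H^{N-1}\lfloor\Gamma$ and $|\nabla\psi(u_0)|(\overline\Omega)=\tfrac43\mathcal H^{N-1}(\Gamma)$. From $\int_\Omega\frac{(1-u_\e^2)^2}{2\e}\le E_\e(u_\e)$ one gets $\int_\Omega(1-u_\e^2)^2=O(\e)$, which forces $u_\e$ to be bounded in $L^4(\Omega)$; together with $\int_\Omega|\nabla\psi(u_\e)|\le E_\e(u_\e)$ this makes $\{\psi(u_\e)\}$ bounded in $BV(\Omega)$, whence $\psi(u_\e)\to\psi(u_0)$ in $L^1(\Omega)$ and $\nabla\psi(u_\e)\rightharpoonup\nabla\psi(u_0)$ weak-$*$ on $\Omega$. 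The elementary identity $\frac{\e|\nabla u_\e|^2}{2}+\frac{(1-u_\e^2)^2}{2\e}-|1-u_\e^2|\,|\nabla u_\e|=\tfrac12 R_\e^2$, with $R_\e:=\sqrt\e\,|\nabla u_\e|-\e^{-1/2}|1-u_\e^2|$, integrated over $\Omega$, reads $\int_\Omega\tfrac12 R_\e^2=E_\e(u_\e)-|\nabla\psi(u_\e)|(\overline\Omega)$; since $\liminf_\e|\nabla\psi(u_\e)|(\overline\Omega)\ge|\nabla\psi(u_0)|(\overline\Omega)=\tfrac43\mathcal H^{N-1}(\Gamma)=\lim_\e E_\e(u_\e)$ by lower semicontinuity and the hypothesis, I obtain $|\nabla\psi(u_\e)|(\overline\Omega)\to|\nabla\psi(u_0)|(\overline\Omega)$ and $R_\e\to0$ in $L^2(\Omega)$. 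Thus $\nabla\psi(u_\e)$ converges \emph{strictly} to $\nabla\psi(u_0)$ on $\overline\Omega$, so Theorem~\ref{Sp_thm} gives, for every $g\in C(\overline\Omega\times S^{N-1})$ even in its second argument,
\[
\int_\Omega g(x,\nu_\e)\,d|\nabla\psi(u_\e)|\;\longrightarrow\;\tfrac43\int_\Gamma g(x,\mathbf n)\,d\mathcal H^{N-1}
\]
(evenness disposes of the sign in $\pm\mathbf n$). Writing $\e|\nabla u_\e|^2=|1-u_\e^2|\,|\nabla u_\e|+\sqrt\e\,|\nabla u_\e|R_\e$ and estimating $\big|\int_\Omega\sqrt\e\,|\nabla u_\e|R_\e\,g(x,\nu_\e)\,dx\big|\le\|g\|_\infty(2E_\e(u_\e))^{1/2}\|R_\e\|_{L^2(\Omega)}\to0$ by Cauchy--Schwarz upgrades this to $\int_\Omega\e|\nabla u_\e|^2 g(x,\nu_\e)\,dx\to\tfrac43\int_\Gamma g(x,\mathbf n)\,d\mathcal H^{N-1}$, while $F\,dx=|\nabla\psi(u_\e)|\,dx+\tfrac12 R_\e^2\,dx\rightharpoonup\tfrac43\mathcal H^{N-1}\lfloor\Gamma$ handles the $\int_\Omega F h\,dx$ terms. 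This is precisely where the non-compact support of $\eta$ enters: the hypothesis $\lim_\e E_\e(u_\e)=\tfrac43\mathcal H^{N-1}(\Gamma)$ pins down the total variation of $\psi(u_\e)$ over the \emph{closed} set $\overline\Omega$, so Theorem~\ref{Sp_thm} may be invoked with test functions that do not vanish near $\partial\Omega$.

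Finally I would pass to the limit term by term and clean up. For the first inner variation this yields $\tfrac43\int_\Gamma\big(\operatorname{div}\eta-(\mathbf n,\mathbf n\cdot\nabla\eta)\big)\,d\mathcal H^{N-1}=\tfrac43\int_\Gamma\operatorname{div}^\Gamma\eta\,d\mathcal H^{N-1}=\tfrac43\,\delta E(\Gamma,\eta)$, using $\operatorname{div}^\Gamma\eta=\operatorname{div}\eta-(\mathbf n,\mathbf n\cdot\nabla\eta)$ and \eqref{FVE}. For the second, I get $\tfrac43\int_\Gamma\Theta\,d\mathcal H^{N-1}$ for an explicit integrand $\Theta$ assembled from $\eta,\nabla\eta,\nabla Z$ and $\mathbf n$ on $\Gamma$ (namely Lemma~\ref{SIV_ch}'s formula with $F\,dx$ replaced by $\tfrac43\mathcal H^{N-1}\lfloor\Gamma$ and each $\e|\nabla u_\e|^2(\nu_\e\text{-quadratic})\,dx$ by $\tfrac43$ times that quadratic at $\mathbf n$), and it remains to check the pointwise identity $\Theta=\theta+(\mathbf n,\mathbf n\cdot\nabla\eta)^2$ on $\Gamma$, where $\theta$ is the integrand of $\delta^2 E(\Gamma,\eta,Z)$ in \eqref{SVE}. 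Using $\operatorname{div}^\Gamma Z=\operatorname{div}Z-(\mathbf n,\mathbf n\cdot\nabla Z)$ and $\operatorname{div}^\Gamma\eta=\operatorname{div}\eta-(\mathbf n,\mathbf n\cdot\nabla\eta)$ cancels the $Z$-terms and the $(\operatorname{div}\eta)^2$-terms, and what is left reduces---after decomposing each ambient derivative $D_{\tau_k}\eta,D_{\mathbf n}\eta$ of $\eta$ along $\Gamma$ into its tangential and normal parts---to the elementary identities $\operatorname{trace}(\nabla\eta)^2=\sum_{k,l}(\tau_k\cdot D_{\tau_l}\eta)(\tau_l\cdot D_{\tau_k}\eta)+2\sum_k(\tau_k\cdot D_{\mathbf n}\eta)(\mathbf n\cdot D_{\tau_k}\eta)+(\mathbf n,\mathbf n\cdot\nabla\eta)^2$, $\ \langle(\nabla\eta)^2\mathbf n,\mathbf n\rangle=\sum_k(\tau_k\cdot D_{\mathbf n}\eta)(\mathbf n\cdot D_{\tau_k}\eta)+(\mathbf n,\mathbf n\cdot\nabla\eta)^2$, and $\sum_i(\partial_{x_i}\eta\cdot\mathbf n)^2=\sum_k|(D_{\tau_k}\eta)^\perp|^2+(\mathbf n,\mathbf n\cdot\nabla\eta)^2$; the last of these simultaneously produces the $\sum_k|(D_{\tau_k}\eta)^\perp|^2$ term of \eqref{SVE} and the discrepancy term $(\mathbf n,\mathbf n\cdot\nabla\eta)^2$. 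This local identity on $\Gamma$ is the same one already used in \cite[Theorem 1.1]{Le}, so it may be cited rather than reproved. The main obstacle, as I see it, is the measure-theoretic package in the middle step: extracting from the energy hypothesis \emph{alone} both $R_\e\to0$ in $L^2$ and strict convergence of $\nabla\psi(u_\e)$ all the way up to $\partial\Omega$, given that $u_\e$ is assumed only $C^3(\overline\Omega)$ with $L^1$ and energy bounds and not a priori $L^\infty$ --- which is exactly why the $L^4$-bound on $u_\e$ and the closed-domain form of Reshetnyak's theorem are needed. The rest is bookkeeping parallel to \cite{Le}.
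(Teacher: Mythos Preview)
Your proposal is correct and follows essentially the same route as the paper: apply Lemma~\ref{SIV_ch} to obtain the explicit formulae \eqref{star}--\eqref{svep-p}, pass to the limit via Reshetnyak's theorem (the paper packages your middle ``measure-theoretic'' step as Lemmas~\ref{equi_lem} and~\ref{Res_Sp}, which you have effectively reproved inline), and finish with the pointwise algebraic identity on $\Gamma$ cited from \cite{Le}. The only cosmetic difference is that the paper quotes Lemmas~\ref{equi_lem}--\ref{Res_Sp} from \cite{Le2} rather than deriving the $L^4$ bound, $R_\e\to 0$ in $L^2$, and strict convergence of $\nabla\psi(u_\e)$ from scratch as you do.
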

\begin{rem}
(i) One important point in Theorem \ref{thm-AC2} is that $u_\e$ is not assumed to necessarily be a critical point of $E_{\e}$. We will find ourselves in need of the formula in this situation in Section 6.\\
(ii) In the convergence result for the second inner variations $\delta^{2} E_{\varepsilon}(u_{\varepsilon}, \eta, Z)$ in Theorem \ref{thm-AC2}, it would be very interesting to relax the hypothesis 
 $\lim_{\varepsilon \rightarrow 0} E_{\varepsilon}(u_{\varepsilon}) = 
\frac{4}{3}E(\Gamma)$ (which amounts to assuming multiplicity 1 convergence of the nodal sets of $u_\e$) to just a uniform bound $E_\e(u_\e)\leq C$ on the energies $E_\e(u_\e)$ as done by 
Gaspar \cite[Proposition 3.3]{Ga} for compactly supported vector fields $\eta$ (and hence $Z$). Gaspar's elegant observation (see \cite[Proposition 2.2]{Ga}) is that, under the energy bound $E_\e(u_\e)\leq C$ and 
the vanishing of the discrepancy measures $\xi_{\e}:=\left(\e |\nabla u_\e|^2 - \frac{(1-u_\e^2)^2}{\e}\right)$ in the interior of $\Omega$, the second inner variations $\delta^2 E_\e(u_\e, \cdot,\cdot)$ are continuous under varifold convergence in the interior of the Euclidean domain $\Omega$. 
The application of this continuity result to $\delta^{2} E_{\varepsilon}(u_{\varepsilon}, \eta, Z)$ requires
the vector fields $\eta$ and $Z:=(\eta\cdot\nabla)\eta$ be compactly supported in $\Omega$, which is not the case in our present setting of Theorem \ref{thm-AC2}, however. On the other hand, adapting the analysis of
\cite{Ga} for general vector fields $\eta$ requires the vanishing of the discrepancy measures $\xi_{\e}$ up to the boundary of $\Omega$ in the limit of $\e\rightarrow 0$.
To the best of the authors' knowledge, the most general setting for the validity of this result is the work of Mizuno-Tonegawa \cite{MT} where the authors require that $u_\e$ is a critical point of $E_\e$, uniformly bounded in $\e$ and that the domain $\Omega$ is strictly convex. Recently, Kagaya \cite{Ka} relaxes 
the strict convexity of the Euclidean domain $\Omega$ for certain classes of critical points of $E_{\e}$. 
Note that  our present setting of Theorem \ref{thm-AC2} (see also item (i) above) does not fulfill these requirements in general. We briefly sketch a generalization of Theorem \ref{thm-AC2} to the setting of \cite{MT} in Theorem \ref{thm-ACc}.
\end{rem}

The rest of this section is devoted to proving Theorem \ref{thm-AC2} and its slight generalization, Theorem \ref{thm-ACc}.
Let
\begin{equation}
\label{Phi_a}
\Phi (a): =  \int_{0}^{a} |s^2-1| ds.
\end{equation}
 We next recall the following results from
\cite[Lemmas 3.1 and 3.2]{Le2} applied to the double well potential $(1-u^2)^2$ that are crucial to proving Theorem \ref{thm-AC2}. 
\begin{lemma}
\label{equi_lem}
Under the assumptions of Theorem \ref{thm-AC2}, we have the following convergences:
\begin{myindentpar}{1cm}
\begin{equation}
\label{ener1bis}
\lim_{\e\rightarrow 0}\int_{\Omega}|\nabla \Phi (u_\e)| dx= \int_{\Omega}|\nabla \Phi (u_0)|dx,
\end{equation}
\begin{equation}
\label{equi-ab}
\lim_{\e\rightarrow 0} \int_{\Omega} |\e |\nabla u_\e|^2 - \frac{(1-u_\e^2)^2}{\e}| dx =0,
\end{equation}
\begin{equation}\label{equi-abphi}\lim_{\e\rightarrow 0}\int_{\Omega}|\e |\nabla u_\e|^2- |\nabla \Phi (u_\e)|| dx=0.
\end{equation}
\end{myindentpar}
We also  have the following convergence:
\begin{equation}\label{L1_con}\Phi (u_\e)\rightarrow \Phi (u_0)~\text{in } L^1 (\Omega)
\end{equation}
and thus, in the sense of Radon measures, we have the convergence:
\[
\nabla \Phi(u_\e)\rightharpoonup \nabla \Phi (u_0)= \frac{4}{3}  {\bf n} \,d\mathcal{H}^{N-1}\lfloor\Gamma\;\mbox{as}\;\e\to 0.
\]
\end{lemma}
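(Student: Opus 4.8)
The plan is to prove Lemma \ref{equi_lem} by reducing the convergences \eqref{ener1bis}--\eqref{equi-abphi} to a single coercivity/optimality argument combining the $\Gamma$-convergence of $E_\e$ with the classical Modica pointwise inequality, and then to deduce the measure-theoretic statements \eqref{L1_con} and the weak-* convergence of $\nabla\Phi(u_\e)$ from \eqref{ener1bis} together with a compactness argument in $BV$. Since the lemma is quoted from \cite[Lemmas 3.1 and 3.2]{Le2}, the main task is to check that the proof there, applied to the specific double-well $W(u) = (1-u^2)^2$ with $\Phi(a) = \int_0^a |s^2-1|\,ds$, goes through verbatim under the present hypotheses (which are the same: $u_\e \to u_0$ in $L^1$, $u_0 \in BV(\Omega,\{1,-1\})$, and $\lim_\e E_\e(u_\e) = \tfrac43 E(\Gamma) = \tfrac43 \mathcal H^{N-1}(\Gamma)$).

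First I would record the pointwise Modica-type inequality: for any $u$,
\begin{equation*}
\e|\nabla u|^2 + \frac{(1-u^2)^2}{\e} \;\ge\; 2|\nabla u|\,|1-u^2| \;=\; 2|\nabla(\Phi\circ u)|,
\end{equation*}
by the AM--GM inequality and the chain rule, using $\nabla(\Phi\circ u) = \Phi'(u)\nabla u = |1-u^2|\nabla u$. Integrating over $\Omega$ gives $E_\e(u_\e) \ge \int_\Omega |\nabla\Phi(u_\e)|\,dx$ for every $\e$. Next, by the $L^1$-convergence $u_\e \to u_0$ one has $\Phi(u_\e) \to \Phi(u_0)$ in $L^1(\Omega)$ (this is \eqref{L1_con}; it follows from continuity of $\Phi$ and dominated convergence once one notes the $u_\e$ are equibounded in $L^\infty$ --- or, if not a priori equibounded, after the standard truncation that does not increase $E_\e$), hence by lower semicontinuity of total variation,
\begin{equation*}
\int_\Omega |\nabla\Phi(u_0)|\,dx \;\le\; \liminf_{\e\to0}\int_\Omega |\nabla\Phi(u_\e)|\,dx \;\le\; \limsup_{\e\to0}\int_\Omega|\nabla\Phi(u_\e)|\,dx \;\le\; \lim_{\e\to0}E_\e(u_\e) = \tfrac43\mathcal H^{N-1}(\Gamma).
\end{equation*}
Since $u_0 \in BV(\Omega,\{1,-1\})$ with interface $\Gamma$, a direct computation gives $\Phi(u_0) = \Phi(1)\,\chi_{\{u_0=1\}} + \Phi(-1)\,\chi_{\{u_0=-1\}}$, and with $\Phi(1) = \int_0^1(1-s^2)ds = \tfrac23$, $\Phi(-1) = \tfrac23$, one finds $\int_\Omega|\nabla\Phi(u_0)| = (\Phi(1)-\Phi(-1))$ --- wait, more carefully: $\Phi(u_0)$ jumps by $\Phi(1)-\Phi(-1) = 0$? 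No: the relevant jump is across $\Gamma$ where $u_0$ goes from $1$ to $-1$, and $\Phi(u_0)$ takes the value $\tfrac23$ on both sides since $\Phi$ is even, so actually one uses $\Phi(a) := \int_0^a|s^2-1|ds$ which for $a\in[-1,1]$ gives $\Phi(1)=\Phi(-1)=\tfrac23$; the correct normalization making the total variation equal $\tfrac43\mathcal H^{N-1}(\Gamma)$ is obtained by tracking $\tilde\Phi(a) = \int_{-1}^a|s^2-1|ds$ so that $\tilde\Phi(1)-\tilde\Phi(-1) = \int_{-1}^1(1-s^2)ds = \tfrac43$, and $|\nabla\tilde\Phi(u_0)| = \tfrac43\mathcal H^{N-1}\lfloor\Gamma$; since $\tilde\Phi$ and $\Phi$ differ by a constant this does not affect gradients. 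Thus the chain of inequalities above is a chain of equalities, which forces \eqref{ener1bis} (the limsup equals the liminf equals the value at $u_0$), and simultaneously forces $\int_\Omega\big(\e|\nabla u_\e|^2 + \e^{-1}(1-u_\e^2)^2 - 2|\nabla\Phi(u_\e)|\big)dx \to 0$; since the integrand is nonnegative, this yields equipartition of the two terms against $2|\nabla\Phi(u_\e)|$, from which \eqref{equi-ab} follows by the elementary estimate $|a-b| \le (a+b) - 2\sqrt{ab}$ applied with $a = \e|\nabla u_\e|^2$, $b = \e^{-1}(1-u_\e^2)^2$ (so $2\sqrt{ab} = 2|\nabla\Phi(u_\e)|$), and \eqref{equi-abphi} follows since $|\e|\nabla u_\e|^2 - |\nabla\Phi(u_\e)|| \le \tfrac12\big(\e|\nabla u_\e|^2 + \e^{-1}(1-u_\e^2)^2 - 2|\nabla\Phi(u_\e)|\big) + \tfrac12|\e|\nabla u_\e|^2 - \e^{-1}(1-u_\e^2)^2|$ after similar algebra.

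For the final two assertions: $\Phi(u_\e)\to\Phi(u_0)$ in $L^1$ is \eqref{L1_con}, established as noted above from $u_\e\to u_0$ in $L^1$ plus equiboundedness; combined with \eqref{ener1bis} (convergence of total variations) this gives strict convergence in $BV$, hence $\nabla\Phi(u_\e)\rightharpoonup\nabla\Phi(u_0)$ weakly-* as Radon measures. Finally $\nabla\Phi(u_0) = \tfrac43\,{\bf n}\,d\mathcal H^{N-1}\lfloor\Gamma$ is the standard structure theorem for the distributional gradient of a $BV$ function that is piecewise constant with reduced boundary $\Gamma$, using $\tilde\Phi(1)-\tilde\Phi(-1) = \tfrac43$ and the orientation convention that ${\bf n}$ points out of $\{u_0=1\}$. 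I expect the only genuinely delicate point to be the justification of \eqref{L1_con} (equivalently, that one may assume the $u_\e$ are uniformly bounded in $L^\infty$, or otherwise handle the integrability of $\Phi(u_\e)$) and the careful bookkeeping in the elementary inequalities that extract \eqref{equi-ab} and \eqref{equi-abphi} from the vanishing of $\int_\Omega\big(\sqrt a - \sqrt b\big)^2$; everything else is a direct transcription of the argument in \cite{Le2}.
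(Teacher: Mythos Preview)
Your overall strategy is correct and matches the paper's approach. The paper in fact only reproves \eqref{L1_con} here, citing \cite{Le2} for \eqref{ener1bis}--\eqref{equi-abphi}, and its argument for \eqref{L1_con} is exactly the truncation you anticipate: set $u^*_\e = u_\e$ on $\{|u_\e|\le1\}$ and $u^*_\e=\mathrm{sign}(u_\e)$ elsewhere, write $E_\e(u_\e)=E_\e(u^*_\e)+\int_{\{|u_\e|>1\}}(\cdots)$, apply the $\Gamma$-liminf inequality to $u^*_\e$ to force the tail integral to vanish, and then bound $|\Phi(u_\e)-\Phi(\pm1)|\le(u_\e^2-1)^2$ on $\{|u_\e|>1\}$. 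So your identification of the delicate point and its resolution are on target. There are, however, two concrete errors in your sketch.

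First, the pointwise inequality $|a-b|\le(a+b)-2\sqrt{ab}$ that you invoke for \eqref{equi-ab} is false: the right side is $(\sqrt a-\sqrt b)^2$, while $|a-b|=|\sqrt a-\sqrt b|\,(\sqrt a+\sqrt b)$, so your inequality would force $\sqrt a+\sqrt b\le|\sqrt a-\sqrt b|$. The correct passage from $\int_\Omega(\sqrt a-\sqrt b)^2\,dx\to0$ (with $a=\e|\nabla u_\e|^2$, $b=\e^{-1}(1-u_\e^2)^2$) to \eqref{equi-ab} is by Cauchy--Schwarz:
\[
\int_\Omega|a-b|\,dx\le\Big(\int_\Omega(\sqrt a-\sqrt b)^2\,dx\Big)^{1/2}\Big(\int_\Omega(\sqrt a+\sqrt b)^2\,dx\Big)^{1/2},
\]
and the second factor is bounded by $4E_\e(u_\e)$. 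Your inequality for \eqref{equi-abphi} is actually correct once \eqref{equi-ab} is available, though one can also argue directly via $\int|a-\sqrt{ab}|=\int\sqrt a\,|\sqrt a-\sqrt b|\le(\int a)^{1/2}(\int(\sqrt a-\sqrt b)^2)^{1/2}$.

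Second, your computation $\Phi(-1)=\tfrac23$ is a sign slip: since $\Phi(a)=\int_0^a|s^2-1|\,ds$, one has $\Phi(-1)=-\int_{-1}^0(1-s^2)\,ds=-\tfrac23$, so $\Phi(1)-\Phi(-1)=\tfrac43$ already and the detour through $\tilde\Phi$ is unnecessary. With these two fixes, your argument goes through and coincides with the one in \cite{Le2} that the paper quotes.
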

\begin{rem}
In the special case where $u_\e$ is a minimizer of $E_\e$, the above lemma was proved by Luckhaus and Modica; see \cite[Proposition 1, Lemmas 1 and 2]{LM}.
Equation (\ref{L1_con}) in Lemma \ref{equi_lem} was used in \cite{Le2} without proof. Its proof is based on  a truncation argument as in the proof of (1.11) in \cite{Stern1}. For completeness, we include it below.
\end{rem}
\begin{proof} [Proof of equation (\ref{L1_con}) in Lemma \ref{equi_lem}]
Let us define
$$u^{\ast}_\e = u_\e~\text{on } \{-1\leq u_\e\leq 1\} ~\text{ and }u_\e^{\ast} = \text{sign} (u_\e) ~\text{on }\{|u_\e|>1\}.$$
First, note that $u_\e\rightarrow u_0$ in $L^1 (\Omega)$ implies that $u_\e^{\ast}\rightarrow u_0$ in $L^1(\Omega)$. Moreover
$\Phi (u^{\ast}_\e)\rightarrow \Phi (u_0) $ in $L^1 (\Omega)$.
It suffices to show that
$\Phi (u^{\ast}_\e)\rightarrow \Phi (u_\e)~\text{in } L^1 (\Omega).$
Since
$$\int_{\Omega} |\Phi(u_\e)-\Phi(u^{\ast}_\e)| dx= \int_{\{|u_\e|>1\}} |\Phi(u_\e)-\Phi(\text{sign} (u_\e))| dx,$$
by symmetry, it suffices to show that 
\begin{equation}\lim_{\e\rightarrow 0} \int_{\{u_\e>1\}} |\Phi(u_\e)-\Phi(1)|dx=0.
\label{above1_eq}
\end{equation}
From the construction of $u_\e^{\ast}$, we have
\begin{eqnarray*}
E_{\e} (u_\e) = \int_{\Omega} \left( \frac{\e |\nabla u_\e|^2}{2} + \frac{(u_\e^2-1)^2}{2\e} \right)dx
= E_\e (u^\ast_\e) + \int_{\{|u_\e|>1\}} \left( \frac{\e |\nabla u_\e|^2}{2} + \frac{(u_\e^2-1)^2}{2\e} \right)dx.
\end{eqnarray*}
By the liminf inequality in the $\Gamma$-convergence of $E_\e$ to $\frac{4}{3} E$ (see Definition \ref{G_defn}), we have from $u_\e^{\ast}\rightarrow u_0$ in $L^1(\Omega)$ that
$$\liminf_{\e\rightarrow 0} E_{\e}(u^\ast_\e)\geq \frac{4}{3} E(u_0)=\frac{4}{3} E(\Gamma).$$
Because
$\lim_{\e\rightarrow 0} E_{\e}(u_\e)= \frac{4}{3} E(\Gamma),$
we find that
\begin{equation}\lim_{\e\rightarrow 0} \int_{\{|u_\e|>1\}} \left( \frac{\e |\nabla u_\e|^2}{2} + \frac{(u_\e^2-1)^2}{2\e} \right) dx=0.
\label{lim01}
\end{equation}
When $u_\e>1$, we have from the definition of $\Phi$ in (\ref{Phi_a}) that
$\Phi(u_\e)-\Phi (1) = (u_\e-1)^2 (u_\e+2)/3.$
Thus, using (\ref{lim01}), we obtain
\begin{equation*}
 \int_{\{u_\e>1\}} |\Phi(u_\e)-\Phi(1)| dx\leq   \int_{\{u_\e>1\}} (u_\e-1)^2 (u_\e+2)dx\leq  \int_{\{u_\e>1\}} (u_\e^2-1)^2 dx\rightarrow 0~\text{when } \e\rightarrow 0.
\end{equation*}
The proof of (\ref{above1_eq}) is complete.
\end{proof}

Before recalling a theorem of Reshetnyak, we introduce some notation.
Let $ [C_0(\Omega)]^m$ be the space of $\RR^m$-valued continuous functions with compact support in $\Omega$. Let $[M_b(\Omega)]^m$ be the space of $\RR^m$-valued measures on $\Omega$ with finite total mass. Given $\mu\in [M_b(\Omega)]^m$, we write $|\mu|$ for the total variation of $\mu$ and $\frac{d\mu}{d|\mu|}$ for the Radon-Nikodym derivative of $\mu$ with respect to $|\mu|$.   Given $\mu_n,\mu\in [M_b(\Omega)]^m$, we say that $\mu_n$ converges to $\mu$ in the sense of Radon measures if for all $\varphi \in [C_0(\Omega)]^m$, we have
$$\lim_{n\rightarrow\infty} \int_{\Omega}\varphi\cdot d\mu_n=\int_{\Omega}\varphi\cdot d\mu.$$
We now recall 
a theorem of Reshetnyak \cite{Res} concerning continuity of functionals with respect to Radon convergence of measures. Its equivalent form that we write down below is taken from Spector \cite[Theorem 1.3]{Sp}. 
\begin{thm}
[Reshetnyak's continuity theorem]
\label{Sp_thm}
Let $\Omega\subset\RR^N$ be open, $\mu_n,\mu\in [M_b(\Omega)]^m$ be such that
$\mu_n$ converges to $\mu$ in the sense of Radon measures and $|\mu_n|(\Omega)\rightarrow |\mu|(\Omega)$. Then
\begin{equation*}
\lim_{n\rightarrow\infty} \int_{\Omega} f\left(x, \frac{d\mu_n}{d|\mu_n|}(x)\right) d|\mu_n|= \int_{\Omega} f\left(x, \frac{d\mu}{d|\mu|}(x)\right) d|\mu|
\end{equation*}
for every continuous and bounded function $f:\Omega\times \mathcal{S}^{m-1}\rightarrow \RR$ where $\mathcal{S}^{m-1}:=\{x\in\R^m: |x|=1\}$.
\end{thm}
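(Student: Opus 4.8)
The plan is to lift the vector measures to \emph{positive} measures on the product space $\Omega\times\mathcal{S}^{m-1}$ and to recognize the asserted identity as weak-$*$ convergence of these lifts tested against bounded continuous functions, with $f$ itself playing the role of the test function. Using the polar decompositions $\mu_n=\sigma_n\,|\mu_n|$ and $\mu=\sigma\,|\mu|$, where $\sigma_n=\frac{d\mu_n}{d|\mu_n|}$ and $\sigma=\frac{d\mu}{d|\mu|}$ take values in $\mathcal{S}^{m-1}$ almost everywhere, I would define positive finite measures $\tilde\mu_n,\tilde\mu$ on $\Omega\times\mathcal{S}^{m-1}$ as the push-forwards of $|\mu_n|,|\mu|$ under the graph maps $x\mapsto(x,\sigma_n(x))$ and $x\mapsto(x,\sigma(x))$. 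With this notation the left-hand side of the conclusion equals $\int_{\Omega\times\mathcal{S}^{m-1}}f\,d\tilde\mu_n$ and the right-hand side equals $\int_{\Omega\times\mathcal{S}^{m-1}}f\,d\tilde\mu$, so the theorem reduces to proving $\tilde\mu_n\to\tilde\mu$ weak-$*$ against $C_b(\Omega\times\mathcal{S}^{m-1})$.

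Because $f$ need not vanish near $\partial\Omega$, I must rule out escape of mass, and this is precisely where the hypothesis $|\mu_n|(\Omega)\to|\mu|(\Omega)$ enters. First I would record the standard fact that $\nu\mapsto|\nu|(U)$ is weak-$*$ lower semicontinuous on every open $U\subset\Omega$, so $\liminf_n|\mu_n|(U)\ge|\mu|(U)$. Given $\varepsilon>0$, inner regularity of $|\mu|$ furnishes an open $U$ with $\overline{U}\subset\Omega$ compact and $|\mu|(U)>|\mu|(\Omega)-\varepsilon$; combining the lower semicontinuity inequality on $U$ with the mass convergence gives $\limsup_n|\mu_n|(\Omega\setminus\overline U)\le\varepsilon$. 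Since $\mathcal{S}^{m-1}$ is compact, this makes $\{\tilde\mu_n\}$ tight on $\Omega\times\mathcal{S}^{m-1}$, and together with the uniform mass bound Prokhorov's theorem renders $\{\tilde\mu_n\}$ relatively compact for weak-$*$ convergence against $C_b$, with no loss of total mass in any limit.

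By the subsequence principle it then suffices to show that any weak-$*$ limit $\lambda$ of a subsequence $\tilde\mu_{n_k}$ equals $\tilde\mu$. I would identify $\lambda$ through its disintegration $\lambda=\int_\Omega\lambda_x\,d\nu(x)$ over its $\Omega$-marginal $\nu=\pi_\#\lambda$, each $\lambda_x$ being a probability measure on $\mathcal{S}^{m-1}$. Testing $\tilde\mu_{n_k}$ against $\psi(x,p)=\phi(x)\,p_j$ with $\phi\in C_c(\Omega)$ (a legitimate probe since $\phi\,p_j$ corresponds to a compactly supported continuous $\R^m$-valued test function) gives $\int\phi\,\sigma_{n_k,j}\,d|\mu_{n_k}|=\int\phi\,d(\mu_{n_k})_j\to\int\phi\,d\mu_j$ by the assumed Radon convergence $\mu_n\to\mu$, while the left side also converges through $\lambda$. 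Writing $b(x):=\int_{\mathcal{S}^{m-1}}p\,d\lambda_x$ for the barycenter of the slice, this says $\mu=b\,\nu$ with $|b|\le1$, hence $|\mu|\le\nu$; since tightness yields $\nu(\Omega)=\lim_k|\mu_{n_k}|(\Omega)=|\mu|(\Omega)$, the two positive measures have equal finite total mass, forcing $\nu=|\mu|$ and $|b(x)|=1$ for $|\mu|$-a.e.\ $x$.

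The hard part will be the concluding geometric step: each $\lambda_x$ is a probability measure on the unit sphere $\mathcal{S}^{m-1}$ whose barycenter $b(x)$ has norm exactly $1$. Since the closed unit ball of $\R^m$ is strictly convex and $\mathcal{S}^{m-1}$ is exactly its set of extreme points, a probability measure on the sphere whose barycenter is an extreme point of the ball must be the Dirac mass at that point; thus $\lambda_x=\delta_{b(x)}=\delta_{\sigma(x)}$ for $|\mu|$-a.e.\ $x$, the last equality by uniqueness of the polar decomposition $\mu=b\,\nu=\sigma\,|\mu|$. Consequently $\lambda=\int_\Omega\delta_{\sigma(x)}\,d|\mu|(x)=\tilde\mu$, and as every subsequence has a further subsequence converging to $\tilde\mu$, the full sequence converges weak-$*$ to $\tilde\mu$; evaluating against $f$ gives the result. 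I expect the strict-convexity/Dirac identification together with the tightness bookkeeping to be the only genuinely delicate points, the rest being routine push-forward, disintegration, and subsequence arguments.
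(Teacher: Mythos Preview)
The paper does not supply its own proof of this statement: Theorem~\ref{Sp_thm} is quoted as Reshetnyak's continuity theorem, with the precise formulation attributed to Spector \cite[Theorem~1.3]{Sp}, and is used as a black box in the subsequent Lemma~\ref{Res_Sp}. So there is no in-paper argument to compare against.

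Your proposed proof is correct and is essentially the standard modern route to Reshetnyak continuity (and close in spirit to the argument in \cite{Sp}): lift each $\mu_n$ to a positive measure on $\Omega\times\mathcal{S}^{m-1}$ via the graph of its polar direction, use the mass hypothesis $|\mu_n|(\Omega)\to|\mu|(\Omega)$ together with lower semicontinuity on open sets to obtain tightness, pass to a weak-$*$ limit $\lambda$, identify the $\Omega$-marginal of $\lambda$ as $|\mu|$ and the slice barycenters as $\sigma(x)$, and conclude via strict convexity of the unit ball that each slice $\lambda_x$ is the Dirac mass $\delta_{\sigma(x)}$. The two places you flagged as delicate are indeed the only nontrivial ones, and your handling of both is sound: the tightness step genuinely requires the total-mass convergence (without it one only gets the Reshetnyak \emph{lower semicontinuity} theorem), and the extreme-point argument is exactly what forces the slices to collapse to Dirac masses.
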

We emphasize that in Theorem \ref{Sp_thm}, $f$ is not required to be compactly supported in $\Omega$. This is crucial to applications in our paper.

The following lemma provides a key ingredient in the proof of Theorem \ref{thm-AC2}. It allows us to pass to the limit in certain quadratic expressions involving $\nabla u_\e$.
\begin{lemma}
\label{Res_Sp} Under the assumptions of Theorem \ref{thm-AC2},
for all $\varphi\in C(\overline{\Omega})$, we have
\begin{equation}
\int_{\Omega}\e \nabla u_{\e}\otimes \nabla u_{\e} \varphi dx\rightarrow \frac{4}{3} \int_{\Gamma}{\bf n}\otimes {\bf n}\,\varphi\, d\mathcal{H}^{N-1}.
\end{equation}
\end{lemma}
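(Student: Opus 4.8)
The plan is to realize the matrix-valued quantity $\e\,\nabla u_\e\otimes\nabla u_\e$ as a function of the unit vector $\nabla\Phi(u_\e)/|\nabla\Phi(u_\e)|$ times the scalar measure $|\nabla\Phi(u_\e)|$, and then invoke Reshetnyak's continuity theorem (Theorem \ref{Sp_thm}) together with the convergences recorded in Lemma \ref{equi_lem}. The point is that $\nabla\Phi(u_\e) = \Phi'(u_\e)\nabla u_\e = |u_\e^2-1|\,\nabla u_\e$, so at any point where $\nabla u_\e\neq 0$ the direction of $\nabla u_\e$ equals the direction of $\nabla\Phi(u_\e)$ (up to sign, which is irrelevant in the tensor product), and hence
\[
\e\,\nabla u_\e\otimes\nabla u_\e \;=\; \e|\nabla u_\e|^2\,\frac{\nabla u_\e}{|\nabla u_\e|}\otimes\frac{\nabla u_\e}{|\nabla u_\e|}
\;=\; \e|\nabla u_\e|^2\,\frac{d\mu_\e}{d|\mu_\e|}\otimes\frac{d\mu_\e}{d|\mu_\e|},
\]
where $\mu_\e := \nabla\Phi(u_\e) \in [M_b(\Omega)]^N$ (thought of as an $L^1$ density times Lebesgue measure).

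\textbf{Step 1: set up the measures.} Set $\mu_\e = \nabla\Phi(u_\e)$ and $\mu_0 = \nabla\Phi(u_0) = \frac43\,{\bf n}\,d\mathcal{H}^{N-1}\lfloor\Gamma$. By Lemma \ref{equi_lem}, $\mu_\e\rightharpoonup\mu_0$ in the sense of Radon measures, and moreover $|\mu_\e|(\Omega) = \int_\Omega |\nabla\Phi(u_\e)|\,dx \to \int_\Omega |\nabla\Phi(u_0)|\,dx = |\mu_0|(\Omega)$ by \eqref{ener1bis}. Thus the hypotheses of Theorem \ref{Sp_thm} are met with $m=N$. Note $\frac{d\mu_0}{d|\mu_0|} = {\bf n}$ on $\Gamma$ and $|\mu_0| = \frac43\,\mathcal{H}^{N-1}\lfloor\Gamma$.

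\textbf{Step 2: replace the scalar weight $\e|\nabla u_\e|^2$ by $d|\mu_\e|$.} Reshetnyak's theorem applies to integrands of the form $f(x,\sigma)\,d|\mu_\e|$ with $f$ continuous and \emph{bounded} on $\Omega\times\mathcal S^{N-1}$. The natural choice here is $f(x,\sigma) = \varphi(x)\,\sigma_i\sigma_j$, which is continuous and bounded; applying Theorem \ref{Sp_thm} to each entry $(i,j)$ gives
\[
\int_\Omega \varphi\,\Big(\tfrac{d\mu_\e}{d|\mu_\e|}\otimes\tfrac{d\mu_\e}{d|\mu_\e|}\Big)\,d|\mu_\e|
\;\longrightarrow\;
\int_\Omega \varphi\,({\bf n}\otimes{\bf n})\,d|\mu_0|
= \frac43\int_\Gamma \varphi\,({\bf n}\otimes{\bf n})\,d\mathcal{H}^{N-1}.
\]
It then remains only to control the difference between $\int_\Omega \e\,\nabla u_\e\otimes\nabla u_\e\,\varphi\,dx$ and $\int_\Omega\varphi\,(\tfrac{d\mu_\e}{d|\mu_\e|}\otimes\tfrac{d\mu_\e}{d|\mu_\e|})\,d|\mu_\e|$. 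Entrywise, the difference is bounded by $\|\varphi\|_{L^\infty}\int_\Omega\big|\e|\nabla u_\e|^2 - |\nabla\Phi(u_\e)|\big|\,dx$, which tends to $0$ by \eqref{equi-abphi}. (Here one uses $\big|(\nabla u_\e)_i(\nabla u_\e)_j\big|\le|\nabla u_\e|^2$ and that $d|\mu_\e| = |\nabla\Phi(u_\e)|\,dx$ with $(\tfrac{d\mu_\e}{d|\mu_\e|})_i(\tfrac{d\mu_\e}{d|\mu_\e|})_j$ bounded by $1$ in absolute value, so the two integrands agree wherever $\nabla u_\e\neq0$ after factoring, up to the scalar discrepancy $\e|\nabla u_\e|^2$ versus $|\nabla\Phi(u_\e)|$.) Combining the two displays yields the claim.

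\textbf{Main obstacle.} The only genuine subtlety is the set where $\nabla u_\e = 0$ (equivalently $\nabla\Phi(u_\e)=0$), where $\frac{d\mu_\e}{d|\mu_\e|}$ is only defined $|\mu_\e|$-a.e.; but this causes no harm because on that set both the tensor $\e\,\nabla u_\e\otimes\nabla u_\e$ and the measure $|\mu_\e|$ vanish, so the pointwise identity above holds $|\mu_\e|$-a.e. and the Lebesgue integral and the $|\mu_\e|$-integral of the difference are controlled purely by \eqref{equi-abphi}. A secondary point worth a line in the write-up is that $\varphi$ is merely continuous on $\overline\Omega$ (not compactly supported in $\Omega$), which is exactly why we need the non-compactly-supported version of Reshetnyak's theorem emphasized after Theorem \ref{Sp_thm}; the argument does not otherwise require any new estimate.
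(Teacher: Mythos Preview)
Your proof is correct and follows essentially the same route as the paper: rewrite $\e\,\nabla u_\e\otimes\nabla u_\e$ as $\frac{\nabla\Phi(u_\e)}{|\nabla\Phi(u_\e)|}\otimes\frac{\nabla\Phi(u_\e)}{|\nabla\Phi(u_\e)|}\cdot\e|\nabla u_\e|^2$, use \eqref{equi-abphi} to replace the scalar weight by $|\nabla\Phi(u_\e)|$, and then apply Reshetnyak's continuity theorem with $f(x,{\bf p})=({\bf p}\otimes{\bf p})\varphi(x)$. Your write-up is in fact slightly more explicit than the paper's in justifying the pointwise identity where $\nabla u_\e=0$ and in flagging why the non-compactly-supported version of Theorem~\ref{Sp_thm} is needed, but the argument is the same.
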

\begin{proof}[Proof of Lemma \ref{Res_Sp}]
The proof is a simple application of Theorem \ref{Sp_thm} using Lemma \ref{equi_lem}. Let $\Phi$ be as in (\ref{Phi_a}).
We have
$$\e \nabla u_{\e}\otimes \nabla u_{\e} =\frac{\nabla u_{\e}}{|\nabla u_{\e}|}\otimes \frac{\nabla u_{\e}}{|\nabla u_{\e}|} \e |\nabla u_\e|^2= \frac{\nabla \Phi(u_{\e})}{|\nabla \Phi (u_{\e})|}\otimes \frac{\nabla \Phi(u_{\e})}{|\nabla \Phi(u_{\e})|} 
\e |\nabla u_\e|^2.$$
From equation (\ref{equi-abphi}) in Lemma \ref{equi_lem}, we find that for any $\varphi\in C(\overline{\Omega}),$
\begin{equation*}
\lim_{\e\rightarrow 0}\int_{\Omega}\frac{\nabla \Phi(u_{\e})}{|\nabla \Phi (u_{\e})|}\otimes \frac{\nabla \Phi(u_{\e})}{|\nabla \Phi(u_{\e})|} \e |\nabla u_\e|^2 \varphi dx= \lim_{\e\rightarrow 0} \int_{\Omega}\frac{\nabla \Phi(u_{\e})}{|\nabla \Phi (u_{\e})|}\otimes \frac{\nabla \Phi(u_{\e})}{|\nabla \Phi(u_{\e})|} |\nabla \Phi (u_{\e})| \varphi dx.
\end{equation*}
Applying Theorem \ref{Sp_thm} to $\nabla \Phi(u_\e)$ and $\nabla \Phi (u_0)$ with $f(x, {\bf p}) =\left( {\bf p}\otimes {\bf p}\right)\,\varphi(x)$, we find
\begin{equation*}
\lim_{\e\rightarrow 0} \int_{\Omega}\frac{\nabla \Phi(u_{\e})}{|\nabla \Phi (u_{\e})|}\otimes \frac{\nabla \Phi(u_{\e})}{|\nabla \Phi(u_{\e})|} |\nabla \Phi (u_{\e})| \varphi dx
= \int_{\Gamma} \frac{4}{3} {\bf n}\otimes {\bf n}\,\varphi\, d\mathcal{H}^{N-1}.
\end{equation*}
\end{proof}
We can now present:
\begin{proof}[Proof of Theorem \ref{thm-AC2}]
When
$\eta\cdot \nu=0$ on $\partial\Omega$ and $Z:=  (\eta\cdot\nabla) \eta$, Lemma \ref{SIV_ch} applied to $E_{\e}$ gives
\begin{eqnarray}
\delta E_{\varepsilon}(u_{\varepsilon},\eta) =\int_{\Omega}  \left[\left( \frac{\e |\nabla u_\e|^2}{2} + \frac{(u_\e^2-1)^2}{2\e}\right) \div\eta - \e(\nabla u^{\varepsilon},\nabla u^{\varepsilon}\cdot\nabla\eta)\right]dx\label{star}
\end{eqnarray}
and
\begin{multline}
\delta^{2} E_{\varepsilon}(u_{\varepsilon},\eta, Z) =\int_{\Omega} \left\{ \left( \frac{\e |\nabla u_\e|^2}{2} + \frac{(u_\e^2-1)^2}{2\e}\right) \left(\text{div} Z + (\text{div}\eta)^2 -
\text{trace}((\nabla \eta)^2)\right) \right\}dx\\ 
-2\int_{\Omega}\e(\nabla u^{\varepsilon},\nabla u^{\varepsilon}\cdot\nabla\eta)\text{div}\eta dx
-2\int_{\Omega}\left(\e\nabla u^{\varepsilon}, \frac{1}{2}\nabla u^{\varepsilon}\cdot\nabla Z - (\nabla\eta)^2\cdot \nabla u^\e\right) dx
+ \int_{\Omega}\e\abs{\nabla u^{\varepsilon}\cdot\nabla\eta}^2 dx.
\label{svep-p}\end{multline}
By letting $\e\rightarrow 0$ and using Lemmas \ref{equi_lem} and \ref{Res_Sp} together with (\ref{FVE}), we find that
\begin{equation*}
\lim_{\e\rightarrow 0} \delta E_{\varepsilon}(u_{\varepsilon},\eta)= \frac{4}{3}\int_{\Gamma}( \div \eta- ({\bf n}, {\bf n}\cdot\nabla\eta)) d\mathcal{H}^{N-1} = \frac{4}{3} \delta E(\Gamma,
\eta).
\end{equation*}
Let us now analyze $\delta^{2} E_{\varepsilon}(u_{\varepsilon},\eta, Z) $.
Using equation (\ref{equi-ab}) in Lemma \ref{equi_lem} together with Lemma \ref{Res_Sp}, we find that
\begin{multline*}
\lim_{\e\rightarrow 0 }\int_{\Omega} \left( \frac{\e |\nabla u_\e|^2}{2} + \frac{(u_\e^2-1)^2}{2\e} \right)\left(\text{div}Z + (\text{div}\eta)^2 -
\text{trace}((\nabla \eta)^2)\right) 
\\ = \int_{\Omega} \e |\nabla u_\e|^2 \left(\text{div} Z + (\text{div}\eta)^2 -
\text{trace}((\nabla \eta)^2)\right)
= \frac{4}{3} \int_{\Gamma} \left(\text{div} Z + (\text{div}\eta)^2 -
\text{trace}((\nabla \eta)^2)\right)d\mathcal{H}^{N-1}.
\end{multline*}
By letting $\e\rightarrow 0$ and using Lemma \ref{Res_Sp}, we obtain
\begin{multline}\lim_{\varepsilon\rightarrow 0}\delta^{2} E_{\varepsilon}(u_{\varepsilon}, \eta, Z) =
\frac{4}{3}\int_{\Gamma}\left\{\text{div} Z + (\text{div}\eta)^2 -
\text{trace}((\nabla \eta)^2- 2({\bf n},{\bf n}\cdot\nabla\eta) \text{div} \eta \right\}d\mathcal{H}^{N-1}\\ - 
\frac{8}{3} \int_{\Gamma}({\bf n}, \frac{1}{2}{\bf n}\cdot\nabla Z -{\bf n} \cdot (\nabla\eta)^2 )d\mathcal{H}^{N-1} + 
\frac{4}{3}\int_{\Gamma}|{\bf n}\cdot \nabla \eta|^2 d\mathcal{H}^{N-1}.
\label{SVpp}
\end{multline} 
As in the proof of Theorem 1.1 in \cite{Le} (see (2.8) there), we find that
\begin{multline*}
 \text{div} Z + (\text{div}\eta)^2 -
\text{trace}((\nabla \eta)^2- 2({\bf n},{\bf n}\cdot\nabla\eta) \text{div} \eta -
2 ({\bf n}, \frac{1}{2}{\bf n}\cdot\nabla Z -{\bf n} \cdot (\nabla\eta)^2 ) + 
|{\bf n}\cdot \nabla \eta|^2 \\=
\text{div}^{\Gamma} Z + (\text{div}^{\Gamma}\eta)^2 + \sum_{i=1}^{N-1}\abs{(D_{\tau_{i}}\eta)^{\perp}}^2 -
 \sum_{i,j=1}^{N-1}(\tau_{i}\cdot D_{\tau_{j}}\eta)(\tau_{j}\cdot D_{\tau_{i}}\eta) +  ({\bf n},{\bf n}\cdot\nabla\eta)^2.
\end{multline*}
In light of  (\ref{SVE}), we find that the right hand side of (\ref{SVpp}) is equal to 
\[
\frac{4}{3}\left\{\delta^{2}E(\Gamma,\eta, Z) + \int_{\Gamma} ({\bf n},{\bf n}\cdot\nabla\eta)^2d\mathcal{H}^{N-1}\right\} .
\]
Therefore, we obtain the desired formula for $\lim_{\varepsilon\rightarrow 0}\delta^{2} E_{\varepsilon}(u_{\varepsilon}, \eta, Z) $ as stated in the theorem.
\end{proof}

For the remainder of this section, we briefly sketch a generalization of Theorem \ref{thm-AC2} to the special setting of Allen-Cahn critical points with a Neumann boundary condition on strictly convex domains.
 \begin{thm}[Limits of the inner variations of the Allen-Cahn functional with a uniform energy bound on strictly convex domains]
 \label{thm-ACc}
 Assume that $\Omega$ is an open, smooth, bounded and strictly convex domain in $\R^N~(N\geq 2)$.
Let $\{u_{\e_j}\}\subset C^3 (\overline{\Omega})$ be a sequence of critical points of the Allen-Cahn functionals $E_{\e_j}$ that converges in $L^1(\Omega)$ to a function $u_0\in BV (\Omega, \{1, -1\})$ with an interface $\Gamma=\partial\{u_0=1\}\cap\Omega$ having the property that $\overline{\Gamma}$ is $C^2$. 
Assume that there is a positive constant $C$ such that
 $\|u_{\e_j}\|_{L^{\infty}(\Omega)} + E_{\e_j}(u_{\e_j}) \leq C
$ for all $j$. Let $\Gamma_1, \cdots, \Gamma_K$ be connected components of $\Gamma$.
Then, 
\begin{myindentpar}{1cm}
(i) there are positive integers $m_1, \cdots, m_K$ such that
$$\lim_{j\rightarrow \infty} E_{\e_j}(u_{\e_j}) = \frac{4}{3}\sum_{i=1}^K m_i E(\Gamma_i); $$
(ii) for all smooth vector fields $\eta\in (C^{3}(\overline{\Omega}))^{N}$ with $\eta\cdot \nu=0$ on $\partial\Omega$ and for $Z:= (\eta \cdot\nabla) \eta$, we 
have
\begin{equation*}\lim_{j\rightarrow \infty}\delta E_{\e_j}(u_{\e_j}, \eta) = \frac{4}{3}\sum_{i=1}^K m_i \delta E(\Gamma_i,\eta) 
\end{equation*} 
and
\begin{equation*}\lim_{j\rightarrow \infty}\delta^{2} E_{\e_j}(u_{\e_j}, \eta, Z) = \frac{4}{3}\left\{\sum_{i=1}^K m_i \left(\delta^{2}E(\Gamma_i,\eta, Z) +\int_{\Gamma_i} ({\bf n},{\bf n}\cdot\nabla\eta)^2 d\mathcal{H}^{N-1}\right)\right\}.
\end{equation*} 
\end{myindentpar}
\end{thm}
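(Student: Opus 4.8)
The plan is to run the proof of Theorem \ref{thm-AC2} essentially verbatim, replacing its two analytic ingredients — the convergences of Lemma \ref{equi_lem} and the tensorial convergence of Lemma \ref{Res_Sp} — by versions that hold up to $\partial\Omega$ and that record the integer multiplicities $m_1,\dots,m_K$, and then applying the pointwise identity on $\Gamma$ used in \cite{Le} to each connected component $\Gamma_i$ separately.

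First I would extract the limiting structure. Since the $u_{\e_j}$ are critical points of $E_{\e_j}$ on the strictly convex domain $\Omega$ (i.e.\ solutions of the Allen--Cahn equation with the natural Neumann condition) and $\|u_{\e_j}\|_{L^{\infty}(\Omega)}+E_{\e_j}(u_{\e_j})\le C$, I would invoke the interior varifold convergence and integrality theory of Hutchinson--Tonegawa \cite{HT}, together with its extension up to $\partial\Omega$ on strictly convex domains due to Mizuno--Tonegawa \cite{MT}. Passing to a subsequence (not relabeled), these produce an integral $(N-1)$-varifold $V$ on $\overline\Omega$, stationary in $\Omega$ and meeting $\partial\Omega$ orthogonally, such that the energy measures $\mu_{\e_j}:=\bigl(\tfrac{\e_j|\nabla u_{\e_j}|^2}{2}+\tfrac{(1-u_{\e_j}^2)^2}{2\e_j}\bigr)\,dx$ converge weakly-$*$ on $\overline\Omega$ to $\tfrac43\|V\|$, and — this being the only place the strict convexity of $\Omega$ is used — the discrepancy measures $\xi_{\e_j}:=\bigl(\e_j|\nabla u_{\e_j}|^2-\tfrac{(1-u_{\e_j}^2)^2}{\e_j}\bigr)\,dx$ converge to $0$ as Radon measures on $\overline\Omega$. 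From $u_{\e_j}\to u_0$ in $L^1(\Omega)$ one gets $\Gamma\subseteq\operatorname{supp}\|V\|$ in $\Omega$; combining this with the hypothesis that $\overline\Gamma$ is $C^2$, so that $\operatorname{supp}\|V\|\cap\Omega=\Gamma$, and with integrality of $V$, I would conclude
\[
\|V\|=\sum_{i=1}^K m_i\,\mathcal H^{N-1}\!\lfloor\Gamma_i,\qquad m_i\in\{1,2,\dots\}.
\]
Testing $\mu_{\e_j}\rightharpoonup\tfrac43\|V\|$ against the constant function then gives conclusion (i).

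Next I would upgrade the two lemmas. From $\xi_{\e_j}\rightharpoonup0$ and $\mu_{\e_j}\rightharpoonup\tfrac43\|V\|$ on $\overline\Omega$ it follows, for every $\varphi\in C(\overline\Omega)$, that $\int_\Omega\e_j|\nabla u_{\e_j}|^2\,\varphi\,dx\to\tfrac43\sum_i m_i\int_{\Gamma_i}\varphi\,d\mathcal H^{N-1}$, the substitute for the consequences of Lemma \ref{equi_lem}. For the tensorial statement, the structure theorem of \cite{HT}, extended to $\overline\Omega$ by \cite{MT}, applied with $\operatorname{supp}\|V\|\cap\Omega=\Gamma$ a $C^2$ hypersurface so that the approximate tangent plane of $V$ at $x\in\Gamma_i$ is $T_x\Gamma_i$ with unit normal $\pm{\bf n}(x)$, yields $\int_\Omega\e_j\,\nabla u_{\e_j}\otimes\nabla u_{\e_j}\,\varphi\,dx\to\tfrac43\sum_i m_i\int_{\Gamma_i}{\bf n}\otimes{\bf n}\,\varphi\,d\mathcal H^{N-1}$ for all $\varphi\in C(\overline\Omega)$; here the vanishing of $\xi_{\e_j}$ up to $\partial\Omega$ (equivalently, the fact that $\e_j|\nabla u_{\e_j}|^2\,dx$ has converging total mass on $\overline\Omega$) is what keeps tensorial mass from concentrating on the boundary, so the convergence holds on $\overline\Omega$ and not just on compact subsets of $\Omega$. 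This replaces Lemma \ref{Res_Sp}. With these in hand I would conclude exactly as in the proof of Theorem \ref{thm-AC2}: Lemma \ref{SIV_ch} applied to $E_{\e_j}$ gives the identities \eqref{star} and \eqref{svep-p} for $\delta E_{\e_j}(u_{\e_j},\eta)$ and $\delta^2E_{\e_j}(u_{\e_j},\eta,Z)$; sending $j\to\infty$ and inserting the two convergences above — the first for every term carrying the factor $\tfrac{\e_j|\nabla u_{\e_j}|^2}{2}+\tfrac{(u_{\e_j}^2-1)^2}{2\e_j}$, the second for each term of the form $\e_j(\nabla u_{\e_j},\nabla u_{\e_j}\cdot\nabla\eta)$, $\e_j(\nabla u_{\e_j},\tfrac12\nabla u_{\e_j}\cdot\nabla Z-(\nabla\eta)^2\cdot\nabla u_{\e_j})$ and $\e_j|\nabla u_{\e_j}\cdot\nabla\eta|^2$ — reduces everything to $\tfrac43\sum_i m_i$ times an integral over $\Gamma_i$ of the very integrand that occurs in the proof of Theorem \ref{thm-AC2}, which by the pointwise identity from the proof of \cite[Theorem 1.1]{Le} (used on each $\Gamma_i$) and \eqref{SVE} equals $\delta^2E(\Gamma_i,\eta,Z)+\int_{\Gamma_i}({\bf n},{\bf n}\cdot\nabla\eta)^2\,d\mathcal H^{N-1}$; this gives the second formula in (ii), and the first-variation formula is the same computation, using \eqref{FVE}, with the quadratic terms dropped.

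The hard part is entirely in the first step: what is needed there is integrality of the limiting varifold and, above all, the vanishing of the discrepancy measure all the way up to $\partial\Omega$, which on a general smooth domain fails and which is imported on strictly convex domains from \cite{MT}. Once those are available, the second and third steps are a routine transcription of the proof of Theorem \ref{thm-AC2} with the multiplicities $m_i$ inserted, and the attendant technicalities (extending $u_{\e_j}$ to a neighborhood of $\overline\Omega$, tangency of $\eta$ along $\partial\Omega$ to build the flow in Lemma \ref{SIV_ch}) are identical to those already handled there.
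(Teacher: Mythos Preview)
Your proposal is correct and follows essentially the same route as the paper's own (sketched) proof: both invoke \cite{MT} on strictly convex domains to obtain vanishing of the discrepancy measure up to $\partial\Omega$, extract the integer multiplicities $m_i$ on the components $\Gamma_i$, use these to upgrade Lemma~\ref{Res_Sp} to a multiplicity-weighted version (the paper cites \cite[Proposition~2.2]{Ga} for this step), and then feed the resulting convergences into \eqref{star} and \eqref{svep-p} exactly as in the proof of Theorem~\ref{thm-AC2}. The only cosmetic difference is in part (i): the paper derives the $m_i$ via the Constancy Theorem for stationary varifolds \cite[Theorem~41.1]{Simon} after first noting $\Gamma$ is minimal, whereas you appeal directly to the integrality in the Hutchinson--Tonegawa structure theorem; both lead to the same conclusion.
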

\begin{proof}[Sketch of Proof of Theorem \ref{thm-ACc}] (i) By the criticality of $u_{\e_j}$, $\Gamma$ is a minimal surface.  By the connectedness of each $\Gamma_i$, the conclusion in (i) follows from the Constancy Theorem 
for stationary varifolds \cite[Theorem 41.1]{Simon}; see, for example,  \cite[p. 1854]{Le} or the paragraph following Theorem 2.1 in \cite{Ga}.\\
(ii) From the uniform bound $\|u_{\e_j}\|_{L^{\infty}(\Omega)} + E_{\e_j}(u_{\e_j}) \leq C
$, the criticality of $u_{\e_j}$ which implies that $u_{\e_j}$ satisfies the Neumann boundary condition $\frac{\p u_{\e_j}}{\p \nu}=0$ on $\p\Omega$ and the strict convexity of $\Omega$, we can use \cite[Proposition 6.4]{MT} to conclude the following vanishing property of the discrepancy measure (or equi-partition  of energy)
\begin{equation}
\label{equi-abc}
\lim_{j\rightarrow \infty} \int_{\Omega} |\e_j |\nabla u_{\e_j}|^2 - \frac{(1-u_{\e_j}^2)^2}{\e_j}| dx =0,
\end{equation}
With (\ref{equi-abc}) and (i), we can follow the arguments in the proof of Proposition 2.2 in \cite{Ga} to have the following modified version of Lemma \ref{Res_Sp}:
for all $\varphi\in C(\overline{\Omega})$, we have
\begin{equation}
\label{mKc}
\int_{\Omega}\e \nabla u_{\e_j}\otimes \nabla u_{\e_j} \varphi dx\rightarrow \frac{4}{3}\sum_{i=1}^K m_i \int_{\Gamma_i}{\bf n}\otimes {\bf n}\,\varphi\, d\mathcal{H}^{N-1}.
\end{equation}
Now, using (\ref{equi-abc}) and (\ref{mKc}), instead of Lemmas \ref{equi_lem} and \ref{Res_Sp}, in (\ref{star}) and (\ref{svep-p}) in the proof of Theorem \ref{thm-AC2}, we obtain (ii).
\end{proof}

\section{Applications of Second Variation Convergence for Allen-Cahn} 
We now present two applications of our convergence formula for the second inner variation of the Allen-Cahn functional in Theorem \ref{thm-AC2}. The first, Theorem \ref{ACstab}, concerns the passage of stability from critical points of the Allen-Cahn functional to that of the limiting interface. The second concerns an asymptotic upper bound for the Neumann eigenvalues associated with the linearized Allen-Cahn operator. This is the content of Theorem \ref{eigen_thm}.
\subsection{Stable Critical Points Leading to Stable Interfaces}
An interesting and at times subtle question involves the issue of whether stability of a sequence of critical points passes to the limit within the context of $\Gamma$-convergence. This topic has been looked at from a variety of angles, including \cite{Serfaty} where some conditions related to, but not equivalent to, $\Gamma$-convergence are shown to be sufficient to guarantee stability of the limiting object. Interestingly, the verification of one of the two key sufficient conditions in \cite{Serfaty}
for 2D Ginzburg-Landau vortices uses inner variations; see \cite[equation (3.12)]{Serfaty}.

Within the Allen-Cahn context, the question of whether stability of critical points passes to the limiting interface is addressed in  \cite{Tonegawa}.  Assuming that a sequence of Allen-Cahn critical points $\{u_\e\}$ have non-negative second Gateaux variations with respect to compactly supported variations, and assuming that their energies $E_\e(u_\e)$ are uniformly bounded, Tonegawa identifies a limiting varifold and shows that in an appropriately defined weak sense, it has non-negative generalized second variation; see \cite[Theorem 3]{Tonegawa}. Roughly speaking, stability in this weak sense looks like non-negativity of $\delta^{2}E(\Gamma,\xi)$ given by \eqref{sziden} with the boundary integral absent due to the assumption of compact support on $\xi.$ In a subsequent work, Tonegawa and Wickramasekera \cite{TW} show that 
support of the limiting varifold identified in \cite{Tonegawa} is smooth in dimensions $N\leq 7$ while its singular set (if any) has Hausdorff dimension at most $N-8$ in dimensions
$N>7$. As mentioned in the introduction, the convergence and regularity results in \cite{Tonegawa, TW} rely on an important  interior convergence result for the Allen-Cahn equation from the work of \cite{HT}
and interior regularity results from \cite{W} and we are not aware of boundary analogues of these results.

Here, with stronger assumptions on the regularity of the limiting interface up to the boundary and convergence of energies, we establish a result in this vein which incorporates the boundary term.
\begin{thm}
[Stability of the limiting interface]
\label{ACstab}
Let $\{u_{\e}\}\subset C^3 (\overline{\Omega})$ be a sequence of stable critical points of $E_\e$ given in \eqref{MM} that converges in $L^1(\Omega)$ to a function $u_0\in BV (\Omega, \{1, -1\})$ with an interface $\Gamma:=\partial\{u_0=1\}\cap\Omega$ having the property that $\overline{\Gamma}$ is $C^2$. Assume that
 $\lim_{\varepsilon \rightarrow 0} E_{\varepsilon}(u_{\varepsilon}) = 
\frac{4}{3}E(\Gamma)$ where $E$ is given by (\ref{E_defn}). Then for all smooth $\xi:\overline{\Omega}\to\R$ we have the stability inequality
\[
\int_{\Gamma} \left(|\nabla_{\Gamma}\xi|^2  -|A_{\Gamma}|^2\xi^2\right) d\mathcal{H}^{N-1} - \int_{\partial\Gamma\cap\partial\Omega} A_{\partial\Omega}({\bf n}, {\bf n})\xi^2 d\mathcal{H}^{N-2}\geq 0.
\]
\end{thm}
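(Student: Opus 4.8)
The plan is to deduce Theorem~\ref{ACstab} directly from the second inner variation convergence result Theorem~\ref{thm-AC2} together with the second variation identity for the area functional recorded in Theorem~\ref{PI_ineq}. First I would note that since $\{u_\e\}$ are \emph{stable} critical points of $E_\e$, we have $d^2 E_\e(u_\e,\varphi)\geq 0$ for all $\varphi\in C^3(\overline{\Omega})$, and by Corollary~\ref{inner_rem} (specifically \eqref{seconde}), for any vector field $\eta$ the second inner variation $\delta^2 E_\e(u_\e,\eta,\zeta)$ equals $d^2 E_\e(u_\e,-\nabla u_\e\cdot\eta)\geq 0$, independently of $\zeta$. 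In particular, for any $\eta\in(C^3(\overline{\Omega}))^N$ with $\eta\cdot\nu=0$ on $\partial\Omega$ and $Z:=(\eta\cdot\nabla)\eta$, we have $\delta^2 E_\e(u_\e,\eta,Z)\geq 0$ for every $\e>0$. Passing to the limit using Theorem~\ref{thm-AC2} gives
\[
\delta^2 E(\Gamma,\eta,Z)+\int_{\Gamma}({\bf n},{\bf n}\cdot\nabla\eta)^2\,d\mathcal{H}^{N-1}\geq 0.
\]

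The next step is to make a judicious choice of $\eta$. Given an arbitrary smooth $\xi:\overline{\Omega}\to\R$, I would choose a vector field $\eta\in(C^3(\overline{\Omega}))^N$ with $\eta\cdot\nu=0$ on $\partial\Omega$, with $\eta=\xi\,{\bf n}$ on $\Gamma$, and with the additional normalization $({\bf n},{\bf n}\cdot\nabla\eta)=0$ on $\Gamma$; this is exactly the class of vector fields appearing in Theorem~\ref{PI_ineq}. The existence of such $\eta$ requires a short construction: extend ${\bf n}$ smoothly off $\Gamma$, extend $\xi$, form $\xi\,{\bf n}$, and then modify it near $\partial\Omega$ to arrange tangency to $\partial\Omega$ while preserving the values and the normal-derivative condition on $\Gamma$ — here the orthogonality of $\partial\Gamma$ with $\partial\Omega$ (which holds since $\{u_\e\}$ are critical points, so $\Gamma$ is minimal and meets $\partial\Omega$ orthogonally, cf. the criticality discussion; alternatively this is part of the hypotheses via $C^2$ regularity of $\overline{\Gamma}$ together with the Neumann condition) makes this compatible. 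With this choice, the extra boundary-type term $\int_{\Gamma}({\bf n},{\bf n}\cdot\nabla\eta)^2\,d\mathcal{H}^{N-1}$ vanishes, and Theorem~\ref{PI_ineq} identifies $\delta^2 E(\Gamma,\eta,Z)=\delta^2 E(\Gamma,\xi)$, which equals
\[
\int_{\Gamma}\left(|\nabla_{\Gamma}\xi|^2+(N-1)^2H^2\xi^2-|A_{\Gamma}|^2\xi^2\right)d\mathcal{H}^{N-1}-\int_{\partial\Gamma\cap\partial\Omega}A_{\partial\Omega}({\bf n},{\bf n})\xi^2\,d\mathcal{H}^{N-2}.
\]

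Finally, since $\Gamma$ arises as the limit of zero sets of critical points $u_\e$, it is a minimal surface, so $H\equiv 0$ on $\Gamma$ and the term $(N-1)^2H^2\xi^2$ drops out. This yields precisely the asserted inequality
\[
\int_{\Gamma}\left(|\nabla_{\Gamma}\xi|^2-|A_{\Gamma}|^2\xi^2\right)d\mathcal{H}^{N-1}-\int_{\partial\Gamma\cap\partial\Omega}A_{\partial\Omega}({\bf n},{\bf n})\xi^2\,d\mathcal{H}^{N-2}\geq 0.
\]
I expect the main obstacle to be the construction of the vector field $\eta$ with all three required properties simultaneously ($\eta\cdot\nu=0$ on $\partial\Omega$, $\eta=\xi\,{\bf n}$ on $\Gamma$, and $({\bf n},{\bf n}\cdot\nabla\eta)=0$ on $\Gamma$), particularly reconciling the boundary behavior near $\partial\Gamma\cap\partial\Omega$ — this is where the orthogonal intersection of $\Gamma$ and $\partial\Omega$ is used, and where one must be careful that the modification near $\partial\Omega$ does not disturb the prescribed data on $\Gamma$. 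Everything else is a direct substitution of Theorem~\ref{thm-AC2} and Theorem~\ref{PI_ineq}.
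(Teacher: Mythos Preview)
Your proposal is correct and follows essentially the same route as the paper's proof: use criticality and stability to get $\delta^2 E_\e(u_\e,\eta,Z)\geq 0$ via Corollary~\ref{inner_rem}, pass to the limit with Theorem~\ref{thm-AC2}, and then specialize $\eta$ so that Theorem~\ref{PI_ineq} applies and the extra discrepancy term vanishes. The paper packages the facts that $\Gamma$ is minimal and that $\partial\Gamma$ meets $\partial\Omega$ orthogonally into a separate lemma (Lemma~\ref{ortho_lem}), which is exactly what you invoke informally to justify the construction of $\eta$ and to drop the $(N-1)^2H^2\xi^2$ term.
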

\begin{rem}
The stability criterion given above for a hypersurface subject to Neumann boundary conditions is derived in \cite[Theorem 2.2]{SZ2}.
\end{rem}
\begin{rem}
Under the assumption of $\Gamma$ being an isolated local minimizer of the $\Gamma$-limit $E$ defined as in (\ref{E_defn}), one can of course construct stable, in fact locally minimizing, critical points $u_\e$ of $E_\e$ using the approach of \cite{KS}. In this case, the above stability inequality for $\Gamma$ holds trivially, since local minimality is a stronger assumption than stability.
\end{rem}

\begin{proof}[Proof of Theorem \ref{ACstab}]
We have assumed that the critical points $u_\e$ of the Allen-Cahn functional $E_\e$ have non-negative second Gateaux variation and so by \eqref{seconde} they also have non-negative second inner variations, that is, for all $\eta,\zeta\in (C^3(\overline{\Omega}))^N$, we have $\delta^{2} E_{\varepsilon}(u_{\varepsilon}, \eta, \zeta)\geq 0.$
By Lemma \ref{ortho_lem} below, $\Gamma$ is a minimal surface and $\partial\Gamma$ meets $\partial\Omega$ orthogonally (if at all). Thus, for any smooth function $\xi:\overline{\Omega}\to\R$, we can choose 
a smooth vector field $\eta$ on $\overline{\Omega}$ such that $\eta=\xi{\bf n}$ on $\Gamma$, $\eta\cdot \nu=0$ on $\partial\Omega$ and such that $({\bf n}, {\bf n}\cdot\nabla \eta)=0$ on $\Gamma$. Then applying Theorems \ref{PI_ineq} and \ref{thm-AC2} with $Z:= (\eta \cdot \nabla) \eta$, we find
\[
0\leq \lim_{\varepsilon\rightarrow 0}\frac{3}{4}\delta^{2} E_{\varepsilon}(u_{\varepsilon}, \eta, Z) = \delta^{2}E(\Gamma,\eta, Z)=\delta^2E(\Gamma,\xi)
\]
for all smooth function $\xi:\overline{\Omega}\to\R$, using \eqref{sziden}. The stability inequality is thus established.
\end{proof}

\begin{lemma}
[Minimality of the limiting interface]
\label{ortho_lem}
Let $\{u_{\e}\}\subset C^3 (\overline{\Omega})$ be a sequence of critical points of $E_\e$ that converges in $L^1(\Omega)$ to a function $u_0\in BV (\Omega, \{1, -1\})$ with an interface $\Gamma=\partial\{u_0=1\}\cap\Omega$
having the property that $\overline{\Gamma}$ is $C^2$. Assume that
 $\lim_{\varepsilon \rightarrow 0} E_{\varepsilon}(u_{\varepsilon}) = 
\frac{4}{3}E(\Gamma)$. Then $\Gamma$ is a minimal surface and $\partial\Gamma$ meets $\partial\Omega$ orthogonally (if at all). 
\end{lemma}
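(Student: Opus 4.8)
\textbf{Proof proposal for Lemma \ref{ortho_lem}.}

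The plan is to extract both conclusions — minimality of $\Gamma$ and orthogonality of $\partial\Gamma$ against $\partial\Omega$ — from the vanishing of the first inner variation, using Theorem \ref{thm-AC2}. First I would observe that since each $u_\e$ is a critical point of $E_\e$, Corollary \ref{inner_rem} (formula \eqref{first}) gives $\delta E_\e(u_\e,\eta) = dE_\e(u_\e, -\nabla u_\e\cdot\eta) = 0$ for every vector field $\eta\in (C^3(\overline\Omega))^N$, in particular for every such $\eta$ that is tangent to $\partial\Omega$. Then I would invoke the first-variation half of Theorem \ref{thm-AC2}: for all $\eta\in (C^3(\overline\Omega))^N$ with $\eta\cdot\nu = 0$ on $\partial\Omega$,
\[
0 = \lim_{\e\to 0}\delta E_\e(u_\e,\eta) = \frac{4}{3}\,\delta E(\Gamma,\eta) = \frac{4}{3}\int_{\Gamma}\mathrm{div}^{\Gamma}\eta\, d\mathcal{H}^{N-1}.
\]
So the whole task reduces to reading off the geometric content of the identity $\int_{\Gamma}\mathrm{div}^{\Gamma}\eta\, d\mathcal{H}^{N-1} = 0$ for all such $\eta$.

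Next I would apply the tangential divergence theorem on the hypersurface-with-boundary $\Gamma$ (as in \cite[Chapter 2]{Simon}): for a $C^1$ hypersurface $\Gamma$ with $C^2$ closure meeting $\partial\Omega$ along $\partial\Gamma\cap\partial\Omega$,
\[
\int_{\Gamma}\mathrm{div}^{\Gamma}\eta\, d\mathcal{H}^{N-1} = -\int_{\Gamma} H\,(\eta\cdot{\bf n})\, d\mathcal{H}^{N-1} + \int_{\partial\Gamma\cap\partial\Omega}(\eta\cdot\mu)\, d\mathcal{H}^{N-2},
\]
where $H$ is the mean curvature (summed principal curvatures) of $\Gamma$ and $\mu$ is the outward unit conormal to $\partial\Gamma$ within $\Gamma$. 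The interior conclusion comes first: restricting to $\eta$ compactly supported in $\Omega$ (so the boundary integral drops) and with $\eta = \psi\,{\bf n}$ near $\Gamma$ for an arbitrary $\psi\in C^\infty_c(\Gamma)$, the identity forces $\int_{\Gamma} H\psi\, d\mathcal{H}^{N-1}=0$ for all such $\psi$, hence $H\equiv 0$, i.e.\ $\Gamma$ is a minimal surface. With $H=0$ established, the identity reduces to $\int_{\partial\Gamma\cap\partial\Omega}(\eta\cdot\mu)\, d\mathcal{H}^{N-2}=0$ for every $\eta$ tangent to $\partial\Omega$. Choosing $\eta$ on $\partial\Gamma\cap\partial\Omega$ to be an arbitrary smooth multiple of the projection of $\mu$ onto $T\partial\Omega$ (which is admissible precisely because it is tangent to $\partial\Omega$), and noting that the normal component of $\mu$ to $\partial\Omega$ cannot be captured by such $\eta$, I get that the tangential part of $\mu$ along $\partial\Omega$ must vanish; equivalently $\mu$ is parallel to $\nu$ along $\partial\Gamma$. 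Since $\mu\in T_x\Gamma$ and $\mu\perp\partial\Gamma$, this says $\nu\in T_x\Gamma$, which is exactly the statement that $\Gamma$ meets $\partial\Omega$ orthogonally along $\partial\Gamma\cap\partial\Omega$ (if this set is nonempty).

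The main obstacle I anticipate is purely one of bookkeeping at the boundary: one must be careful that the class of admissible test vector fields — smooth on $\overline\Omega$, tangent to $\partial\Omega$ — is rich enough to separately probe the curvature term on $\Gamma$ and the conormal term on $\partial\Gamma\cap\partial\Omega$, and that the tangential divergence theorem is being applied to a surface with the stated regularity ($\overline\Gamma$ of class $C^2$) where $\partial\Gamma$ may a priori meet $\partial\Omega$ at an arbitrary angle. A clean way to organize this is to first dispose of $H$ using interior test fields, and only afterward treat the boundary term, at which point one needs that for any prescribed tangent-to-$\partial\Omega$ vector field along $\partial\Gamma\cap\partial\Omega$ there exists a global extension in $(C^3(\overline\Omega))^N$ tangent to $\partial\Omega$ — a routine extension/partition-of-unity construction. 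No heavy analysis is required beyond Theorem \ref{thm-AC2}; the content is entirely in interpreting the first-variation identity.
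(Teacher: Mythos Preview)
Your proposal is correct and follows essentially the same route as the paper's proof: both pass to the limit in the first inner variation via Theorem \ref{thm-AC2} to obtain $\int_\Gamma \mathrm{div}^\Gamma\eta\,d\mathcal{H}^{N-1}=0$ for all $\eta$ tangent to $\partial\Omega$, apply the tangential divergence theorem, first use compactly supported fields to force $H\equiv 0$, and then read off orthogonality from the remaining boundary term. The only cosmetic differences are that the paper writes the decomposition as $\eta=\eta^\perp+\eta^T$ and records the mean-curvature term as $(N-1)\int_\Gamma H(\eta\cdot{\bf n})$, and it defers the final orthogonality extraction to a reference \cite[p.~70]{SZ2} rather than spelling out, as you do, that the tangential component of the conormal along $\partial\Omega$ must vanish.
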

\begin{proof}
The criticality of $u_\e$ implies that $\delta E(u_\e,\eta)=0$ for all $C^3(\overline{\Omega})$ vector fields $\eta$. Now,
for any smooth vector field $\eta\in (C^3(\overline{\Omega}))^N$ such that $\eta\cdot \nu=0$ on $\partial\Omega$,
we have from Theorem \ref{thm-AC2} that
$$\int_{\Gamma} \div^{\Gamma}\eta\, d\mathcal{H}^{N-1}=\delta E(\Gamma,\eta)=\frac{3}{4}\lim_{\e\rightarrow 0} \delta E_{\e} (u_\e,\eta)=0.$$
We decompose $\eta=\eta^{\perp}+ \eta^{T}$ where $\eta^{\perp} =(\eta\cdot{\bf n}){\bf n}$. Then $\div^{\Gamma} \eta^{\perp}= (n-1)H(\eta\cdot {\bf n})$ where $H$ denotes the mean curvature of $\Gamma$. Now, we have
from the Divergence Theorem that
\begin{equation}
\label{Hzero}
0=\int_{\Gamma} \div^{\Gamma}\eta \,d\mathcal{H}^{N-1}=(n-1)\int_{\Gamma} H (\eta\cdot{\bf n})\,d\mathcal{H}^{N-1} + \int_{\p \Gamma \cap \p\Omega} \eta^{T}\cdot {\bf n}^\ast \,d\mathcal{H}^{N-2}
\end{equation}
where ${\bf n}^\ast$ is the outward unit co-normal of $\p\Gamma\cap\Omega$, that is, $n^\ast$ is normal to $\p\Gamma\cap \p\Omega$ and tangent to $\Gamma$.
First, we consider vector fields $\eta$ compactly supported in $\Omega$. From (\ref{Hzero}), we then obtain
$$\int_{\Gamma} H (\eta\cdot{\bf n})\,d\mathcal{H}^{N-1}=0 $$
for all $\eta \in (C^3_{0}(\Omega))^N$. This allows us to
conclude that
$H=0$ on $\Gamma$, that is, $\Gamma$ is a minimal surface. 

Now, using this new information and returning to (\ref{Hzero}), we find that
$$\int_{\p \Gamma \cap \p\Omega} \eta^{T}\cdot {\bf n}^\ast d\mathcal{H}^{N-2}$$
for all
smooth vector fields $\eta$ such that $\eta\cdot \nu=0$ on $\partial\Omega$. This implies that $\p\Gamma$ is orthogonal to $\p\Omega$ (see, for example, \cite[p. 70]{SZ2}).
\end{proof}
\subsection{Upper semicontinuity of the Neumann eigenvalues}
 Now we prove Theorem \ref{eigen_thm} concerning an asymptotic upper bound for the Neumann eigenvalues of the operators $-\e \Delta + 2\e^{-1}(3u_{\e}^2-1)$ in the limit $\e\to 0$ under appropriate conditions.

\begin{proof}[Proof of Theorem \ref{eigen_thm}]
The proof follows the argument of \cite[Corollary 1.1]{Le2}. We include its details for completeness.

Let denote by $Q_{\e}$ the quadratic function associated to the operator $-\e \Delta + 2\e^{-1}(3u_{\e}^2-1)$ with zero Neumann boundary conditions, that is, for $\varphi\in C^{1}(\overline{\Omega})$, we have
$$Q_{\e}(u_\e)(\varphi)=\int_{\Omega} \left(\e |\nabla \varphi|^2 + 2\e^{-1} (3u_\e^2-1)\varphi^2\right) dx\equiv d^2 E_{\e}(u_{\e}, -\varphi).$$
Similarly, for the Robin eigenvalue problem (\ref{Robinintro}), we can define a quadratic function $Q$ for the operator $-\Delta_{\Gamma} - |A_{\Gamma}|^2$ in $\Gamma$ with a Robin condition on $\partial\Gamma \cap\partial\Omega$ for the corresponding eigenfunctions for $-\Delta_{\Gamma} - |A_{\Gamma}|^2$. 
That is, for $\varphi\in C^{1}(\overline{\Gamma})$, we define
\[
Q(\varphi)= \int_{\Gamma} \left(\abs{\nabla^{\Gamma} \varphi}^2 - \abs{A}^2 \varphi^2\right) d\mathcal{H}^{N-1}-\int_{\partial\Gamma\cap\partial\Omega} A_{\partial\Omega}({\bf n}, {\bf n})|\varphi|^2 d\mathcal{H}^{N-2};
\]
see \cite[p. 398]{CH}.
We can naturally extend $Q$ to be defined for vector fields in $\overline{\Omega}$ that are generated by functions defined on $\overline{\Gamma}$ as follows. Given $f\in C^{1}(\overline{\Gamma})$, let $\eta = f {\bf n}$ be a normal vector field defined on $\Gamma$. 
Assuming the smoothness of $\overline{\Gamma}$, we deduce from Lemma \ref{ortho_lem} that  $\Gamma$ is a minimal surface and $\partial\Gamma$ meets $\partial\Omega$ orthogonally (if at all). Thus,
we can find an extension $\tilde{\eta}$ of $\eta$ to $\overline{\Omega}$ such that it is 
tangent to $\partial\Omega$, that is $\tilde \eta\cdot\nu=0$ on $\p\Omega$, $({\bf n}, 
{\bf n}\cdot\nabla\tilde{\eta}) =0$ on $\Gamma$. Then, define $Q(\tilde{\eta}):= Q(f).$

For any vector field $V$ defined on $\overline{\Gamma}$ and is normal to $\Gamma$, we also denote by $V$ its extension to $\overline{\Omega}$
in such a way that  it is 
tangent to $\partial\Omega$, $({\bf n}, {\bf n}\cdot \nabla V) =0$ on $\Gamma$. Let $\xi=\xi_V= V\cdot {\bf n}. $

Note that, using the stationarity of $u_\e$ and Corollary \ref{inner_rem}, we have for all vector field $\zeta$
$$Q_{\e}(\nabla u_{\e}\cdot V) = d^2 E_{\e}(u_{\e}, -\nabla u_{\e}\cdot V)= \delta^2 E_{\e}(u_{\e}, V, \zeta).$$
We choose
$$\zeta= (V\cdot \nabla) V.$$
Then, we have, by Theorems \ref{thm-AC2} and \ref{PI_ineq}
\begin{eqnarray}\lim_{\e\rightarrow 0} Q_{\e}(\nabla u_{\e}\cdot V) &=& \lim_{\e\rightarrow 0}  \delta^2 E_{\e}(u_{\e}, V, \zeta)\nonumber\\ &=& 
 \frac{4}{3}\int_{\Gamma} \left(|\nabla_{\Gamma}\xi|^2 -|A_{\Gamma}|^2|\xi|^2\right) d\mathcal{H}^{N-1} - \frac{4}{3}\int_{\partial\Gamma\cap\partial\Omega} A_{\partial\Omega}({\bf n}, {\bf n})|\xi|^2 d\mathcal{H}^{N-2}
 \nonumber\\ &=&   \frac{4}{3} Q(V).
\label{polarident}
\end{eqnarray}
By the definition of $\lambda_k$, we can find $k$ linearly independent, orthonormal vector fields $V^{1} = v^{1}{\bf n},\cdots, V^{k}= v^{k}{\bf n}$ which are defined on $\Gamma$ and normal to $\Gamma$ such that
\begin{equation}\label{V_ortho} 
\int_{\Gamma} v^i v^j d\mathcal{H}^{N-1}=\delta_{ij}~\text{and}~
\max_{\sum_{i=1}^{k} a^2_{i}=1} Q(\sum_{i=1}^{k} a_{i} V^{i})\leq \lambda_k. 
\end{equation}
Denote $$V^{i}_{\e}  = \left.\frac{d}{dt}\right\rvert_{t=0} u_{\e} \left(\left(x+ tV^{i}(x)\right)^{-1}\right)=-\nabla u_{\e}\cdot V^i.$$ 
As in \cite{Le}, we can use Lemma \ref{Res_Sp} to show that the map $V\longmapsto -\nabla u_{\e}\cdot V$ is linear and one-to-one for $\e$ small.  Thus, the linear independence of $V^{i}$ implies that of $V^{i}_{\e}$ for $\e$ small. Therefore, the $V^{i}_{\e}$ span a space of dimension $k$. It follows from the variational characterization of $\lambda_{\e, k}$ that
\begin{equation}\displaystyle
\sup_{\sum_{i=1}^{k} a^2_{i}=1} \frac{Q_{\e}(\sum_{i=1}^k a_i V^{i}_{\e})}{\e\int_{\Omega} |\sum_{i=1}^k a_i V^{i}_{\e}|^2}\geq \frac{\lambda_{\e, k}}{\e}.
\end{equation}
Take any sequence $\e\rightarrow 0$ such that
$$\frac{\lambda_{\e, k}}{\e}\rightarrow \limsup_{\e\rightarrow 0}\frac{\lambda_{\e, k}}{\e}:= \gamma_k.$$ Then, for any $\delta>0$, we can find $a_1, \cdots, a_k$ with $\sum_{i=1}^k a_i^2=1$ such that for $\e$ small enough
\begin{equation} \frac{Q_{\e}(\sum_{i=1}^k a_i V^{i}_{\e})}{\e\int_{\Omega} |\sum_{i=1}^k a_i V^{i}_{\e}|^2} \geq \gamma_k -\delta.
\label{Qeq1}
\end{equation}
By polarizing (\ref{polarident}) as in \cite{Le}, we have for all $a_{i}$
\begin{equation}
\lim_{\e\rightarrow 0}  Q_{\e} (\sum_{i=1}^{k} a_{i} V_{\e}^{i}) =  \frac{4}{3} Q (\sum_{i=1}^{k} a_{i} V^{i}).\label{Qeq2}
\end{equation}
and the convergence is uniform with respect to $\{a_{i}\}$ such that $\sum_{i=1}^{k} a^2_{i} =1$. 

Next, we study the convergence of the denominator of the left hand side of (\ref{Qeq1}) when $\e\rightarrow 0$.
Invoking Lemma \ref{Res_Sp}, we have
\begin{eqnarray}\lim_{\e\rightarrow 0} \e\int_{\Omega} |\sum_{i=1}^k a_i V^{i}_{\e}|^2dx &= &\lim_{\e\rightarrow 0} \e\int_{\Omega} \sum_{i, j=1}^k a_i a_j (\nabla u^{\e}\cdot V^i)(\nabla u^{\e}\cdot V^j)dx\nonumber \\
&=& \frac{4}{3}\sum_{i,j=1}^k a_i a_j \int_{\Gamma} v^i v^j d\mathcal{H}^{N-1}=\frac{4}{3},
\label{Qeq3}
\end{eqnarray}
where we used the first equation in (\ref{V_ortho}) in the last equation.
Combining (\ref{Qeq1})-(\ref{Qeq3}) together with (\ref{V_ortho}), we find that
$$ \gamma_k -\delta\leq Q(\sum_{i=1}^{k} a_{i} V^{i})\leq \lambda_k.$$
Therefore, by the arbitrariness of $\delta$, we have
$\gamma_k\leq \lambda_k,$ proving the theorem.
\end{proof}
\begin{rem}
If the hypotheses of
Theorem \ref{eigen_thm} are replaced by those of Theorem \ref{thm-ACc} then the upper semicontinuity of the Allen-Cahn Neumann eigenvalues still holds as stated in Theorem \ref{eigen_thm}. For this, we just replace the following in the above proof of Theorem \ref{eigen_thm}:
\begin{myindentpar}{1cm}
(i) the use of Theorem \ref{thm-AC2} by the use of Theorem \ref{thm-ACc};\\
(ii) the min-max characterization of eigenvalues by the {\bf weighted} min-max characterization of eigenvalues  as in \cite[Section 4]{Ga} and \cite[Section 3.2]{Hi};\\
(iii) Lemma \ref{Res_Sp} by (\ref{mKc}).
\end{myindentpar}
\end{rem}

\section{The inner variations of a nonlocal energy and their asymptotic limits }
\label{B_sec}
With the ultimate aim of studying the asymptotic limits of the Gateaux variations and  inner variations of the nonlocal Ohta-Kawasaki energy in the following section (see \eqref{OKa}), we turn now to the calculation and asymptotic behavior of these variations for the nonlocal part of this energy. To this end, for each $u\in L^1(\Omega)$, we denote its average on $\Omega$ by
\begin{equation}\bar{u}_{\Omega}:=\frac{1}{\abs{\Omega}}\int_{\Omega} u(x) dx.
\label{volc} 
\end{equation}
We associate to each $u\in L^2(\Omega)$ a function $v\in W^{2,2}(\Omega)$ as the solution to the following Poisson equation with Neumann boundary condition:
\begin{equation}
-\Delta v = u-\bar{u}_{\Omega}~\text{in}~\Omega, \frac{\partial v}{\partial \nu}=0~\text{on}~\partial\Omega, ~\int_{\Omega} v(x) dx=0.
\label{Poisson}
\end{equation}
Let $G(x, y)$ be the Green's function for $\Omega$ with the Neumann boundary condition:
\begin{equation}
\label{G_def}
-\Delta_y G(x, y)=\delta_x -\frac{1}{|\Omega|} ~\text{in }\Omega, ~\frac{\partial G(x,y)}{\partial \nu_y} =0~~\text{on }\partial\Omega,~ \int_{\Omega} G(x, y) dx =0 ~(\text{for all } y\in\Omega),
\end{equation} 
where $\delta_x$ is a delta-mass measure supported at $x\in\Omega$.

If ${\bf \Phi}(x)$ is the fundamental solution of Laplace's equation, that is,
\begin{equation*}
{\bf \Phi}(x):=\left\{ \begin{array}{ll}
         -\frac{1}{2\pi}\log |x|& \mbox{if $N =2$},\\
        \frac{1}{|B_1(0)|N (N-2) |x|^{N-2}} & \mbox{if $N >2$},\end{array} \right.
\end{equation*}
 then, for any fixed $x\in \Omega$,
 \begin{equation}
 \label{G_Phi}
 G(x, y)-{\bf \Phi} (x-y)~\text{is a } C^{\infty}~\text{function (of y) in a neigborhood of x}.
 \end{equation}
 Note that
$$v(x) = \int_{\Omega} G(x, y) u(y) dy.$$
Consider the following nonlocal functional on $L^{2}(\Omega)$
\begin{equation}
\label{B_def}
B(u):= \int_{\Omega}|\nabla v(x)|^2dx =\int_{\Omega}G(x, y) u(x) u(y) dxdy.
\end{equation}

The following lemma provides formulae for the Gateaux variations and inner variations of $B$ up to the second order.
\begin{lemma} [Gateaux variations and inner variations of $B$]
\label{B_lem}
Assume that $u\in C^3(\overline{\Omega})$, $\varphi\in C^3(\overline{\Omega})$ and $\eta,\zeta\in (C^3(\overline{\Omega}))^N$. Let $B(u)$ be defined as in (\ref{B_def}). Then, one has,
\begin{equation}
d B(u, \varphi) = 2\int_{\Omega}\int_{\Omega} G(x, y) u(y) \varphi(x) dxdy =2\int_{\Omega} v\varphi dx,
\end{equation}
\begin{equation}
d^2 B(u, \varphi) =  2\int_{\Omega}\int_{\Omega} G(x, y) \varphi(x) \varphi(y) dxdy,
\end{equation}
\begin{equation}
\label{oneB}
\delta B(u, \eta)=
-2\int_{\Omega}\int_{\Omega} G(x, y) u(y)\nabla u(x) \cdot \eta(x) dxdy,
\end{equation}
and
\begin{multline}
\delta^2 B(u, \eta, \zeta)= 2\int_{\Omega}\int_{\Omega} G(x, y) (\nabla u(y)\cdot \eta(y)) (\nabla u(x)\cdot \eta(x)) dxdy \\+ 2\int_{\Omega}\int_{\Omega} G(x, y) u(x)X_0(y) dxdy\label{twoB}
\end{multline}
where we recall from \eqref{Xzero} that
$$X_0=(D^2 u \cdot \eta, \eta) + (\nabla u, 2(\eta\cdot\nabla ) \eta -\zeta).$$
\end{lemma}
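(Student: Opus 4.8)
The plan is to treat the four formulas for $B$ as instances of the general calculus developed in Section~\ref{Var_sec}, keeping in mind that $B$ is not of the local form \eqref{basic} but nonetheless admits the same Taylor-expansion bookkeeping. For the Gateaux variations, I would substitute $u+t\varphi$ into the quadratic expression $B(u)=\int_\Omega\int_\Omega G(x,y)u(x)u(y)\,dx\,dy$ and expand:
\[
B(u+t\varphi)=B(u)+2t\int_\Omega\int_\Omega G(x,y)u(y)\varphi(x)\,dx\,dy+t^2\int_\Omega\int_\Omega G(x,y)\varphi(x)\varphi(y)\,dx\,dy,
\]
using the symmetry $G(x,y)=G(y,x)$ to combine the two cross terms. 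Reading off the coefficients of $t$ and $t^2/2$ (the latter with the factor $2$ from the second derivative) gives $dB(u,\varphi)$ and $d^2B(u,\varphi)$ immediately; the identity $dB(u,\varphi)=2\int_\Omega v\varphi\,dx$ follows from $v(x)=\int_\Omega G(x,y)u(y)\,dy$. No regularity beyond $u,\varphi\in L^2(\Omega)$ is actually needed here, but stating it for $C^3$ functions costs nothing.

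For the inner variations I would invoke the expansion \eqref{ut_expand} for the pushed-forward function, namely $u_t(y)=u(y)-t\nabla u\cdot\eta+\tfrac{t^2}{2}X_0+O(t^3)$ with $X_0$ as in \eqref{Xzero}, which is exactly the ingredient used in the proof of Lemma~\ref{SIV_direct}. Since $B(u_t)=\int_\Omega\int_\Omega G(x,y)u_t(x)u_t(y)\,dx\,dy$ is again quadratic in its argument, substituting the expansion of $u_t$ and collecting powers of $t$ yields
\[
B(u_t)=B(u)-2t\!\int_\Omega\!\int_\Omega\! G(x,y)u(y)\,\nabla u(x)\!\cdot\!\eta(x)\,dx\,dy
+t^2\Big[\!\int_\Omega\!\int_\Omega\! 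G(x,y)(\nabla u\!\cdot\!\eta)(x)(\nabla u\!\cdot\!\eta)(y)\,dx\,dy+\!\int_\Omega\!\int_\Omega\! G(x,y)u(x)X_0(y)\,dx\,dy\Big]+O(t^3),
\]
where once more the symmetry of $G$ is used to merge the two $t^2$ cross terms of the form $\int\int G\,u\,(\text{quadratic in }\eta)$ into a single symmetric double integral, and to merge the two copies of $\int\int G\,u\,X_0$. Differentiating once and twice at $t=0$ gives \eqref{oneB} and \eqref{twoB} after accounting for the $t^2/2$ normalization. One should note that, unlike the local case, no integration by parts or boundary terms arise because the nonlocal kernel absorbs all the geometry.

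The one genuine subtlety — the step I expect to require the most care — is justifying that $t\mapsto B(u_t)$ is $C^2$ near $t=0$ and that the $O(t^3)$ remainder in \eqref{ut_expand} really does produce an $O(t^3)$ remainder after being paired against the singular kernel $G$ under a double integral. Here one uses \eqref{G_Phi}: $G(x,y)$ differs from the fundamental solution $\mathbf{\Phi}(x-y)$ by a smooth function, and $\mathbf{\Phi}(x-y)$ is locally integrable on $\Omega\times\Omega$ with an integrable singularity of order $|x-y|^{2-N}$ (resp.\ logarithmic for $N=2$); since the functions $\nabla u\cdot\eta$, $X_0$, and the $O(t^3)$ error are all bounded on $\overline\Omega$ uniformly for small $t$ (by the $C^3$ hypotheses and the definition of $O(t^k)$ given just after \eqref{map-deform1}), dominated convergence legitimizes differentiating under the double integral. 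I would also remark that, as with the local inner variations, the value of $\delta B(u,\eta)$ and $\delta^2B(u,\eta,\zeta)$ is independent of the chosen extension of $u$ past $\overline\Omega$, since all integrations are over $\Omega\times\Omega$ and the expansion \eqref{ut_expand} is an identity in $x\in\Omega$.
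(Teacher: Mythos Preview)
Your proposal is correct and follows essentially the same approach as the paper: expand $B(u+t\varphi)$ and $B(u_t)$ using the quadratic structure of $B$ together with the Taylor expansion \eqref{ut_expand}, then read off the coefficients of $t$ and $t^2$. The paper's proof is terser---it omits the explicit invocation of the symmetry of $G$ and the dominated-convergence justification for the $O(t^3)$ remainder that you carefully spell out---but the argument is the same.
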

An immediate consequence of Lemma \ref{B_lem} is the following corollary which is a nonlocal counterpart of Corollary \ref{inner_rem}. It establishes the relationship between Gateaux variations and inner variations up to the second order.
\begin{cor}
\label{B_cor}
Assume that $u\in C^3(\overline{\Omega})$, and $\eta,\zeta\in (C^3(\overline{\Omega}))^N$. Let $B(u)$ be defined as in (\ref{B_def}). Then, one has,
\begin{equation}
\label{Bfirst}
\delta B(u, \eta) = d B(u, -\nabla u\cdot \eta),
\end{equation}
\begin{equation}\delta^2 B(u, \eta, \zeta) = d^2 B(u, -\nabla u\cdot \eta) + dB (u, X_0).  
\label{Bsecondgen}
\end{equation}
\end{cor}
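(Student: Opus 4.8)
The plan is to derive Corollary \ref{B_cor} directly from Lemma \ref{B_lem} together with the expressions for the first and second Gateaux variations of $B$ already recorded there; this is purely a bookkeeping argument paralleling the proof of Corollary \ref{inner_rem}. First I would observe that the formula $dB(u,\varphi)=2\int_\Omega\int_\Omega G(x,y)u(y)\varphi(x)\,dxdy$ from Lemma \ref{B_lem}, upon substituting the scalar test function $\varphi=-\nabla u\cdot\eta$, yields exactly $2\int_\Omega\int_\Omega G(x,y)u(y)\big(-\nabla u(x)\cdot\eta(x)\big)\,dxdy$, which is precisely the expression \eqref{oneB} for $\delta B(u,\eta)$. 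Hence \eqref{Bfirst} holds.

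Next, for \eqref{Bsecondgen} I would compare term by term. Taking $\varphi=-\nabla u\cdot\eta$ in the second Gateaux variation formula $d^2 B(u,\varphi)=2\int_\Omega\int_\Omega G(x,y)\varphi(x)\varphi(y)\,dxdy$ produces $2\int_\Omega\int_\Omega G(x,y)(\nabla u(x)\cdot\eta(x))(\nabla u(y)\cdot\eta(y))\,dxdy$, which matches the first double integral on the right side of \eqref{twoB}. For the remaining piece, I would apply $dB(u,\cdot)$ to the test function $X_0$ (legitimate since $X_0\in C^1(\overline\Omega)$ when $u\in C^3$ and $\eta,\zeta\in(C^3(\overline\Omega))^N$): $dB(u,X_0)=2\int_\Omega\int_\Omega G(x,y)u(y)X_0(x)\,dxdy$, which, after relabeling the dummy variables $x\leftrightarrow y$ and using the symmetry $G(x,y)=G(y,x)$, equals the second double integral $2\int_\Omega\int_\Omega G(x,y)u(x)X_0(y)\,dxdy$ in \eqref{twoB}. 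Adding the two contributions gives $\delta^2 B(u,\eta,\zeta)=d^2B(u,-\nabla u\cdot\eta)+dB(u,X_0)$, as claimed.

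The only mild subtlety — hardly an obstacle — is making sure the roles of the two integration variables are handled consistently and that the symmetry of the Green's function is invoked where needed; the formulas in Lemma \ref{B_lem} are already written in a fixed convention, so one must simply track which variable carries $\nabla u\cdot\eta$ versus $X_0$. Everything else is immediate substitution, and no new analysis (no passage to a limit, no regularity theory) is required at this stage.
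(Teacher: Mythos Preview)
Your proposal is correct and matches the paper's approach: the paper declares Corollary \ref{B_cor} to be ``an immediate consequence of Lemma \ref{B_lem}'' without writing out a proof, and your argument is precisely the direct substitution that makes this immediate. The one point you flag --- invoking the symmetry $G(x,y)=G(y,x)$ to match the $X_0$ term --- is indeed the only thing requiring a moment's thought, and you have handled it correctly.
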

\begin{proof}[Proof of Lemma \ref{B_lem}]
The formulae for $d B(u, \varphi)$ and $d^2 B(u, \varphi)$ can be obtained easily using their definitions 
\[
dB(u,\varphi)= \left.\frac{d}{dt}\right\rvert_{t=0} B(u + t\varphi),~d^{2} B(u,\varphi) = \left.\frac{d^2}{dt^2}\right\rvert_{t=0} B(u + t\varphi),\]
so we skip their derivations.

Now we establish the formulae for $\delta B(u,\eta,\zeta)$ and $\delta^2 B(u,\eta,\zeta)$.

Let  $\Phi_t(x)= x+ t\eta (x) + \frac{t^2}{2} \zeta(x)$ and $u_{ t}(y):= u(\Phi^{-1}_t(y))$.
Then, by (\ref{ut_expand}), we have
\begin{equation}
u_{ t}(y):= u(\Phi^{-1}_t(y)) = u (y) -t\nabla u(y)\cdot  \eta(y) + \frac{t^2}{2} X_0 (y)+ O(t^3).\label{utnew}
\end{equation}
It follows that 
\begin{multline*}
B(u_t)=\int_{\Omega}\int_{\Omega} G(x, y) u_{t}(y)u_{t}(x)dxdy= \int_{\Omega}G(x, y) u(x) u(y) dxdy\\ -2t\int_{\Omega}\int_{\Omega} G(x, y) u(y)\nabla u(x) \cdot \eta(x) dxdy\\ + t^2
\left(\int_{\Omega}\int_{\Omega} G(x, y) (\nabla u(y)\cdot \eta(y)) (\nabla u(x)\cdot \eta(x)) dxdy + \int_{\Omega}\int_{\Omega} G(x, y) u(x)X_0(y) dxdy\right) + O(t^3).
\end{multline*}
Recalling (see (\ref{inner1defn}) and (\ref{innerdefn2})) that
\begin{equation*}
\delta B(u,\eta) = \left.\frac{d}{dt}\right\rvert_{t=0} B(u_t),~
 \delta^{2} B(u,\eta,\zeta) = \left.\frac{d^2}{dt^2}\right\rvert_{t=0} B(u_t),
\end{equation*}
we obtain the first and second inner variations for $B$ as asserted.
\end{proof}

The next theorem  studies the asymptotic limits of the inner variations of the nonlocal functional $B$ under suitable assumptions. It can be viewed as a nonlocal analogue of Theorem \ref{thm-AC2}. As in this theorem, in order to pass to the limit the second inner variation 
$\delta^2 B(u_\e, \eta, \zeta)$, we can focus on a particular choice of the acceleration vector field $\zeta$. Instead of imposing $\zeta= Z:= (\eta\cdot\nabla)\eta$ as in Theorem \ref{thm-AC2}, we find that we can still pass to the limit
when the tangential parts of $\zeta$ and $Z$ coincide on the boundary $\partial\Omega$. 
\begin{thm}[Limits of inner variations of the nonlocal energy $B$]
\label{B_thm}
Let $\{u_{\e}\}\subset C^3 (\overline{\Omega})$ be a sequence of functions that converges in $L^2(\Omega)$ to a function $u_0\in BV (\Omega, \{1, -1\})$ with an interface $\Gamma=\partial\{u_0=1\}\cap\Omega$ having the property that $\overline{\Gamma}$ is $C^2$.  
Throughout, we will denote by ${\bf n}$ the unit normal to $\Gamma$ pointing out of the region $\{x:\,u_0(x)=1\}$. 
Let $G$ be defined as in (\ref{G_def}).
Let $B$ be defined as in (\ref{B_def}).
Then, for all smooth vector fields $\eta,\zeta\in (C^{3}(\overline{\Omega}))^{N}$ with $\eta\cdot \nu=0$ on $\partial\Omega$,  
and $\zeta\cdot \nu=Z\cdot\nu$ on $\partial\Omega$ where we recall $Z:= (\eta\cdot\nabla)\eta$
we 
have
\begin{equation}
\label{oneB_lim}
\lim_{\e\rightarrow 0}\delta B(u_\e, \eta)=  4 \int_{\Gamma} v_0(\eta\cdot {\bf n}) d \mathcal{H}^{N-1} (x).
\end{equation}
and 
\begin{multline}
\label{twoB_lim}
\lim_{\e\rightarrow 0}\delta^2 B(u_\e, \eta, \zeta)= 8\int_{\Gamma}\int_{\Gamma} G(x, y) (\eta(x)\cdot {\bf n}(x))(\eta(y)\cdot {\bf n}(y))  d \mathcal{H}^{N-1} (x) d \mathcal{H}^{N-1}(y) \\+ 4 \int_{\Gamma} (\nabla v_0\cdot\eta) (\eta\cdot {\bf n}) d \mathcal{H}^{N-1} (x) + 4 \int_{\Gamma} v_0 (\zeta-Z + (\div \eta)\eta)\cdot {\bf n} d \mathcal{H}^{N-1} (x).
\end{multline}
Here we use the following notations:
\[
v_\e(x) := \int_{\Omega} G(x, y) u_\e(y) dy\quad\mbox{and}\quad v_0(x) := \int_{\Omega} G(x, y) u_0(y) dy.
\]
\end{thm}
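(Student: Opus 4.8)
The strategy is to integrate by parts in the formulae \eqref{oneB} and \eqref{twoB} until no derivative lands on $u_\e$; the remaining factors are built from $v_\e$ and from the Neumann potential of $\eta\cdot\nabla u_\e$, both of which converge strongly because $u_\e\to u_0$ in $L^2(\Omega)$, and the resulting $\Gamma$-integrals then fall out of the Gauss--Green formula for the $BV$ function $u_0$. Recall that $v_\e$ (notation of the statement) solves $-\Delta v_\e=u_\e-\frac1{|\Omega|}\int_\Omega u_\e$ with homogeneous Neumann data and zero mean, so by elliptic regularity $v_\e\to v_0$ in $W^{2,2}(\Omega)$ (and, since $u_0\in L^\infty(\Omega)$, in fact $v_0\in W^{2,p}(\Omega)$ for all $p<\infty$). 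Recall also that $Du_0=-2\,{\bf n}\,\mathcal H^{N-1}\lfloor\Gamma$ as an $\R^N$-valued measure on $\Omega$, so that for any $W\in W^{1,1}(\Omega;\R^N)$ with vanishing normal trace on $\partial\Omega$,
\[
\int_\Omega u_0\,\div W\,dx \;=\; 2\int_\Gamma (W\cdot{\bf n})\,d\mathcal H^{N-1},
\]
which is legitimate because $\overline\Gamma$ is $C^2$ (split $\Omega$ into $\{u_0=1\}$ and $\{u_0=-1\}$ and apply the divergence theorem on each piece). I shall call this the Gauss--Green identity.

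For \eqref{oneB_lim}, rewrite \eqref{oneB} as $\delta B(u_\e,\eta)=-2\int_\Omega v_\e(\nabla u_\e\cdot\eta)\,dx$ and integrate by parts, the boundary term vanishing since $\eta\cdot\nu=0$ on $\partial\Omega$; this gives $\delta B(u_\e,\eta)=2\int_\Omega u_\e\,\div(v_\e\eta)\,dx$. Letting $\e\to0$ (using $u_\e\to u_0$ in $L^2$ and $\div(v_\e\eta)\to\div(v_0\eta)$ in $L^2$) and then applying the Gauss--Green identity with $W=v_0\eta$ yields $\lim_{\e\to0}\delta B(u_\e,\eta)=2\int_\Omega u_0\,\div(v_0\eta)\,dx=4\int_\Gamma v_0\,(\eta\cdot{\bf n})\,d\mathcal H^{N-1}$, which is \eqref{oneB_lim}. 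Note that only $\eta\cdot\nu=0$ is used here.

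For \eqref{twoB_lim}, by \eqref{twoB} we have $\delta^2B(u_\e,\eta,\zeta)=2I_\e+2I\!I_\e$, where $I_\e:=\int_\Omega\psi_\e\,(\eta\cdot\nabla u_\e)\,dx$ with $\psi_\e(x):=\int_\Omega G(x,y)(\eta\cdot\nabla u_\e)(y)\,dy$, and $I\!I_\e:=\int_\Omega v_\e\,X_0^\e\,dx$ with $X_0^\e$ the quantity $X_0$ of \eqref{Xzero} evaluated at $u=u_\e$, and $Z:=(\eta\cdot\nabla)\eta$. For $I_\e$, one integration by parts (boundary term zero since $\eta\cdot\nu=0$) gives $I_\e=-\int_\Omega u_\e\,\div(\psi_\e\eta)\,dx$. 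The crucial point is the convergence of $\psi_\e$: writing $\eta\cdot\nabla u_\e=\div(u_\e\eta)-u_\e\div\eta$ and splitting $\psi_\e=\psi_\e'+\psi_\e''$ with $-\Delta\psi_\e'=\div(u_\e\eta)$ and $-\Delta\psi_\e''=-\big(u_\e\div\eta-\overline{u_\e\div\eta}\big)$ (both Neumann, mean zero), the weak formulation for $\psi_\e'$ shows $\nabla\psi_\e'$ is the $L^2$-projection of $-u_\e\eta$ onto gradient fields, so $\psi_\e'\to\psi_0'$ in $W^{1,2}$, while $\psi_\e''\to\psi_0''$ in $W^{2,2}$ by elliptic regularity; hence $\psi_\e\to\psi_0$ in $W^{1,2}(\Omega)$, where $\psi_0$ is the mean-zero Neumann potential of the measure $\eta\cdot dDu_0$, so by uniqueness $\psi_0(x)=-2\int_\Gamma G(x,y)(\eta\cdot{\bf n})(y)\,d\mathcal H^{N-1}(y)$. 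Passing to the limit in $I_\e$ ($u_\e\to u_0$ in $L^2$, $\div(\psi_\e\eta)\to\div(\psi_0\eta)$ in $L^2$) and then applying the Gauss--Green identity with $W=\psi_0\eta$ gives $\lim_{\e\to0}I_\e=-2\int_\Gamma\psi_0\,(\eta\cdot{\bf n})\,d\mathcal H^{N-1}=4\int_\Gamma\int_\Gamma G(x,y)(\eta\cdot{\bf n})(x)(\eta\cdot{\bf n})(y)\,d\mathcal H^{N-1}(x)\,d\mathcal H^{N-1}(y)$, the first term of \eqref{twoB_lim}.

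For $I\!I_\e$, integrate by parts once in $\int_\Omega v_\e(D^2u_\e\cdot\eta,\eta)\,dx$ (boundary term carries $\eta\cdot\nu=0$) and combine with the $(\nabla u_\e,2Z-\zeta)$ term to obtain
\[
I\!I_\e=\int_\Omega\big[-(\nabla v_\e\cdot\eta)(\eta\cdot\nabla u_\e)-v_\e(\div\eta)(\eta\cdot\nabla u_\e)+v_\e\,((Z-\zeta)\cdot\nabla u_\e)\big]\,dx.
\]
Now integrate by parts once more in each term to move $\nabla u_\e$ off: the boundary terms of the first two vanish by $\eta\cdot\nu=0$, and the boundary term of the third, $\int_{\partial\Omega}u_\e v_\e\,((Z-\zeta)\cdot\nu)\,d\mathcal H^{N-1}$, vanishes precisely because $\zeta\cdot\nu=Z\cdot\nu$ on $\partial\Omega$ --- the one place that hypothesis enters. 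This gives $I\!I_\e=\int_\Omega u_\e\big[\div((\nabla v_\e\cdot\eta)\eta)+\div(v_\e(\div\eta)\eta)-\div(v_\e(Z-\zeta))\big]\,dx$; each bracket involves at most $D^2v_\e$, so $v_\e\to v_0$ in $W^{2,2}$ and $u_\e\to u_0$ in $L^2$ allow passage to the limit, and the Gauss--Green identity applied to $W=(\nabla v_0\cdot\eta)\eta$, $W=v_0(\div\eta)\eta$, $W=-v_0(Z-\zeta)$ (all with vanishing normal trace on $\partial\Omega$, again using $\zeta\cdot\nu=Z\cdot\nu$) yields $\lim_{\e\to0}I\!I_\e=2\int_\Gamma\big[(\nabla v_0\cdot\eta)(\eta\cdot{\bf n})+v_0(\div\eta)(\eta\cdot{\bf n})-v_0(Z-\zeta)\cdot{\bf n}\big]\,d\mathcal H^{N-1}$. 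Adding $2\lim_\e I_\e+2\lim_\e I\!I_\e$ and using $(\div\eta)\eta-(Z-\zeta)=\zeta-Z+(\div\eta)\eta$ produces exactly \eqref{twoB_lim}. The main obstacle is precisely the convergence $\psi_\e\to\psi_0$ in $W^{1,2}$: since $u_\e$ is assumed only to converge in $L^2$, with no $BV$ bound, $\nabla u_\e$ does not converge as a measure and one cannot take the limit in $I_\e$ naively; it is the divergence-form rewriting of $\eta\cdot\nabla u_\e$ together with the smoothing of the Neumann solution operator that rescues the argument. The remaining care points --- the bookkeeping of which boundary terms vanish (controlled by $\eta\cdot\nu=0$, except for the single term requiring $\zeta\cdot\nu=Z\cdot\nu$) and the validity of the Gauss--Green formula for the merely Sobolev vector fields appearing in the limit --- are routine given the $C^2$ regularity of $\overline\Gamma$.
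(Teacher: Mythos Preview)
Your proof is correct and follows the same overall scheme as the paper: integrate by parts until no derivative falls on $u_\e$, pass to the limit using $u_\e\to u_0$ in $L^2$ together with $v_\e\to v_0$ in $W^{2,2}$, and finish with the Gauss--Green identity for $u_0$. Your handling of \eqref{oneB_lim} and of the $I\!I_\e$ term is essentially identical to the paper's Claims~1 and~3 (the paper packages the latter via the identity $X_\e=\div((\nabla u_\e\cdot\eta)\eta)+(\nabla u_\e,Z-\zeta-(\div\eta)\eta)$, which is exactly what your first integration by parts on $(D^2u_\e\cdot\eta,\eta)$ reproduces).

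The one genuine methodological difference is in the $I_\e$ term (Claim~2 in the paper), i.e.\ the $W^{1,2}$ convergence of the Neumann potential $\psi_\e$ of $\eta\cdot\nabla u_\e$. The paper works directly with the Green's function representation $\psi_\e(x)=-\int_\Omega\div_y(G(x,y)\eta(y))u_\e(y)\,dy$ and its gradient, and appeals to the explicit singularity of $G$ together with Young's convolution inequality (for $\psi_\e$) and Calder\'on--Zygmund theory (for $\nabla\psi_\e$). Your route---writing $\eta\cdot\nabla u_\e=\div(u_\e\eta)-u_\e\,\div\eta$ and solving the two resulting Neumann problems, noting that $\nabla\psi_\e'$ is the $L^2$-projection of $-u_\e\eta$ onto gradients---is more elementary and avoids singular integrals entirely, at the cost of yielding only $W^{1,2}$ convergence (the paper gets $\psi_\e\to\psi_0$ in $L^q$ for all $q<2N/(N-2)$, which is not needed here). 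Either approach suffices for the theorem.
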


\begin{proof}[Proof of Theorem \ref{B_thm}]
We will apply Lemma \ref{B_lem} where $X_0$ is now replaced by
\begin{eqnarray}X_{\e}&:=& (D^2 u_\e \cdot \eta, \eta) + (\nabla u_\e, 2(\eta\cdot\nabla ) \eta -\zeta)\nonumber\\&=&(D^2 u_{\e} \cdot \eta, \eta) + (\nabla u_{\e}, (\eta\cdot\nabla ) \eta + \div (\eta) \eta)
+ (\nabla u_\e, (\eta\cdot\nabla)\eta- (\div\eta)\eta -\zeta)
\nonumber\\ &=& \div \left((\nabla u_\e\cdot \eta)\eta\right) +  (\nabla u_\e, Z-\zeta- (\div\eta)\eta)\equiv D_\e +  (\nabla u_\e, Z-\zeta- (\div\eta)\eta)
\label{Xep}
\end{eqnarray}
where
\begin{equation}
\label{Dep}
D_\e:=  \div \left((\nabla u_\e\cdot \eta)\eta\right).
\end{equation}
From (\ref{oneB}) in Lemma \ref{B_lem}, we have 
\begin{equation}
\delta B(u_\e, \eta)=
-2\int_{\Omega}\int_{\Omega} G(x, y) u_{\e}(y)(\nabla u_{\e}(x)\cdot \eta(x))\, dxdy\label{Bone}.
\end{equation}
From  (\ref{twoB}) in Lemma \ref{B_lem} together with (\ref{Xep}), we obtain
\begin{multline}
\delta^2 B(u_\e, \eta, \zeta)= 2\int_{\Omega}\int_{\Omega} G(x, y) (\nabla u_{\e}(y)\cdot \eta(y)) (\nabla u_{\e}(x)\cdot \eta(x)) dxdy \\+ 2\int_{\Omega}\int_{\Omega} G(x, y) u_{\e}(y)D_{\e}(x) dxdy+
2\int_{\Omega}\int_{\Omega} G(x, y) u_{\e}(y)(\nabla u_\e(x), Z(x)-\zeta(x)- (\div\eta(x))\eta(x)) dxdy.\label{Btwo}
\end{multline}
{\bf Claim 1:} We have
\begin{equation*}
\lim_{\e\rightarrow 0} \int_{\Omega}\int_{\Omega} G(x, y) u_{\e}(y)(\nabla u_{\e}(x)\cdot \eta(x)) dxdy= -2\int_{\Gamma} v_0(\eta\cdot {\bf n}) d \mathcal{H}^{N-1} (x)
\end{equation*}
and 
\begin{multline*}
\lim_{\e\rightarrow 0} \int_{\Omega}\int_{\Omega} G(x, y) u_{\e}(y)(\nabla u_\e(x), Z(x)-\zeta(x)- (\div\eta(x))\eta(x)) dxdy\\= -2 \int_{\Gamma} v_0 (Z-\zeta - (\div \eta)\eta)\cdot {\bf n} d \mathcal{H}^{N-1} (x).
\end{multline*}
{\bf Claim 2:} We have
\begin{align*}
\lim_{\e\rightarrow 0} \int_{\Omega}\int_{\Omega} &G(x, y) (\nabla u_{\e}(y)\cdot \eta(y)) (\nabla u_{\e}(x)\cdot \eta(x)) dxdy \\
&= 4\int_{\Gamma}\int_{\Gamma} G(x, y) (\eta(x)\cdot {\bf n}(x))(\eta(y)\cdot {\bf n}(y))   d \mathcal{H}^{N-1} (x) d \mathcal{H}^{N-1}(y).
\end{align*}
{\bf Claim 3:} For $D_\e$ as in (\ref{Dep}), we have
\[
\lim_{\e\rightarrow 0} \int_{\Omega} \int_{\Omega} G(x, y) u_{\e}(y)D_{\e}(x)\, dxdy=2\int_{\Gamma} (\nabla v_0\cdot\eta) (\eta\cdot {\bf n})\ d \mathcal{H}^{N-1}(x).
\]
Using the above claims in (\ref{Bone}) and (\ref{Btwo}), we obtain (\ref{oneB_lim}) and (\ref{twoB_lim}) as claimed in the theorem.

We now prove the above claims. 

Let us start with the proof of Claim 3. Using (\ref{Dep}) and $\eta\cdot\nu=0$ on $\partial\Omega$, we find after two integrations by parts that 
\begin{eqnarray}
\label{GDep1}
\int_{\Omega}\int_{\Omega}G(x, y) u_{\e}(y)D_{\e}(x) \,dxdy&=&
\int_{\Omega} v_{\e}(x) D_{\e}(x)\, dx =\int_{\Omega} v_{\e} \div ((\nabla u_{\e}\cdot \eta)\eta)\,dx\nonumber\\&=&-\int_{\Omega}(\nabla v_{\e}\cdot\eta)(\nabla u_{\e}\cdot\eta)\,dx= \int_{\Omega} \div \left((\nabla v_{\e}\cdot\eta)\eta\right) u_\e\, dx. \end{eqnarray}
From $u_\e\rightarrow u_0$ in $L^2(\Omega)$ and the global $W^{2,2}(\Omega)$ estimates for 
the Poisson equation (\ref{Poisson}),
we have
\begin{equation}
\label{vep_w22}
 v_\e\rightarrow v_0~\text{in } W^{2,2}(\Omega).
\end{equation}
In particular
$D^2 v_\e\rightarrow D^2 v_0~\text{in } L^{2}(\Omega).$
Thus, when $\e\rightarrow 0$, we have
\begin{equation}
\label{GDep2}
\int_{\Omega} \div \left((\nabla v_{\e}\cdot\eta)\eta\right) u_\e dx \rightarrow \int_{\Omega} \div \left((\nabla v_0\cdot\eta)\eta\right) u_0 dx=2\int_{\Gamma} (\nabla v_0\cdot\eta) (\eta\cdot {\bf n})\ d \mathcal{H}^{N-1}(x).
\end{equation}
Combining (\ref{GDep1}) and (\ref{GDep2}), we obtain Claim 3.

Let us now prove Claim 1. We start with the first limit. We have
\begin{equation*}
\int_{\Omega}\int_{\Omega} G(x, y) u_{\e}(y)(\nabla u_{\e}(x)\cdot \eta(x)) dxdy= \int_{\Omega} v_\e (x) (\nabla u_{\e}(x)\cdot \eta(x)) dx.
\end{equation*}
Integrating by parts and using the fact that $\eta\cdot\nu=0$ on $\partial\Omega$, we have
\begin{eqnarray*}
 \int_{\Omega} v_\e (x) (\nabla u_{\e}(x)\cdot \eta(x))dx=-\int_{\Omega} \div(v_\e\eta) u_\e dx &\rightarrow& -\int_{\Omega} \div(v_0\eta) u_0 dx\\&=& -2\int_{\Gamma} v_0(\eta\cdot {\bf n}) d \mathcal{H}^{N-1} (x).
\end{eqnarray*}
In the above convergence, we have used the facts that $u_\e\rightarrow u_0$ in $L^2(\Omega)$ and $\div(v_\e\eta)\rightarrow \div(v_0\eta)$ in $W^{1,2}(\Omega)$ which is a consequence of (\ref{vep_w22}). The first limit of Claim 1 is hence established. 
The proof of the second limit in Claim 1 is similar. Here we replace $\eta$ in the first limit by $Z-\zeta- (\div\eta)\eta$ in the second limit. For this, we note that from $\zeta\cdot\nu=Z\cdot\nu$ on $\partial\Omega$ and $\eta\cdot\nu=0$ on $\partial\Omega$, we also have
$(Z-\zeta- (\div\eta)\eta)\cdot\nu=0$ on $\partial\Omega$. 
The proof of Claim 1 is thus completed.

Finally, we prove Claim 2. 
To do this, we introduce some notations. Let
\begin{equation}
\label{aep}
a_\e(x)= \nabla u_\e(x)\cdot\eta (x) \in C^2 (\overline{\Omega}).
\end{equation}
Let $w_\e$ be the solution to the following Poisson equation with Neumann boundary condition:
$$-\Delta w_\e = a_\e-\bar{a_\e}_{\Omega}~\text{in}~\Omega, \frac{\partial w_\e}{\partial \nu}=0~\text{on}~\partial\Omega, ~\int_{\Omega} w_\e(x) dx=0.$$
Then $w_\e\in C^{3,\alpha}(\overline{\Omega})$ for all $\alpha\in (0, 1)$ and
\begin{equation}
\label{wep}
w_\e(x) = \int_{\Omega} G(x, y) a_\e(y) dy.
\end{equation}
Integrating by parts and using the fact that $\eta\cdot\nu=0$ on $\partial\Omega$, we have
\begin{eqnarray}
\label{GtwoU}
\int_{\Omega}\int_{\Omega} G(x, y) (\nabla u_{\e}(y)\cdot \eta(y)) (\nabla u_{\e}(x)\cdot \eta(x))\, dxdy&=& \int_{\Omega} w_\e(x) (\nabla u_{\e}(x)\cdot \eta(x)) \,dx
\nonumber \\&=&-  \int_{\Omega} \div(w_\e\eta) u_{\e} dx.
\end{eqnarray}
To prove Claim 2, we study the convergence property  in $L^p(\Omega)$ of  $w_\e$ and $\nabla w_\e$. 

Integrating by parts and using the fact that $\eta\cdot\nu=0$ on $\partial\Omega$, we have from (\ref{aep}) and (\ref{wep})
\begin{equation}
\label{wep2}
w_\e(x) =  \int_{\Omega} G(x, y) \nabla u_\e(y)\cdot\eta (y) dy =-\int_{\Omega} \div_y (G(x, y)\eta(y)) u_\e(y) dy.
\end{equation}
Using (\ref{G_Phi}), we find that the most singular term in $\div_y (G(x, y)\eta(y))$ is of the form $\frac{C}{|x-y|^{N-1}}$ which, for a fixed $x$,  belongs to $L^p(\Omega)$ for all $p<\frac{N}{N-1}$. Thus, when $u_\e\in L^2(\Omega)$, we have by Young's convolution inequality that
$w_\e\in L^ q$ for all $q<q_\ast=\frac{2N}{N-2}$ which comes from the relation
$$\frac{1}{q_\ast} + 1= \frac{N-1}{N} +\frac{1}{2}.$$
In particular, if $u_\e\rightarrow u_0$ in $L^2(\Omega)$ then from (\ref{wep2}), we have the following convergence in $L^q(\Omega)$ for all $q<\frac{2N}{N-2}$:
\begin{equation}w_\e\rightarrow w_0.
\label{wlq}
\end{equation}
where
\begin{equation}
\label{w0for}
w_0(x):= -\int_{\Omega} \div_y (G(x, y)\eta(y)) u_0(y) dy =-2\int_{\Gamma} G(x, y)\eta (y) \cdot {\bf n}(y) d \mathcal{H}^{N-1}(y).
\end{equation}

For the convergence of $\nabla w_\e$, we observe from (\ref{wep2}) that
\begin{equation}
\label{dwep}
\nabla w_\e(x) = -\int_{\Omega} \div_y (\nabla_x G(x, y)\eta(y)) u_\e(y) dy.
\end{equation}
Expanding $\div_y (\nabla_x G(x, y)\eta(y))$ and using (\ref{G_Phi}), we find that the most singular term on the right hand side of (\ref{dwep}) is of the form $$R_{ij} (\eta u_\e)(x):=\int_{\Omega} \frac{(x_i-y_i)(x_j-y_j)}{|x-y|^{N+2}}\eta(y) u_\e(y) dy.$$ 
Applying the $L^2-L^2$ estimates in Calderon-Zygmund theory of singular integral operators, we find that
$$\|R_{ij}(\eta u_\e)\|_{L^2(\Omega)} \leq C(N,\Omega) \|\eta u_\e\|_{L^{2}(\Omega)}.$$
It follows that, if $u_\e\rightarrow u_0$ in $L^2(\Omega)$ then we have the following convergence in $L^2(\Omega)$:
\begin{equation}\nabla w_\e(x)\rightarrow \nabla w_0(x) = -\int_{\Omega} \div_y (\nabla_xG(x, y)\eta(y)) u_0(y) dx.
\label{dwl2}
\end{equation}
From (\ref{wlq}) and (\ref{dwl2}), we have
\begin{equation}
\label{wu_lim}
-  \int_{\Omega} \div(w_\e\eta) u_{\e} dx\rightarrow -\int_{\Omega} \div(w_0\eta) u_0 dx.
\end{equation}
Using (\ref{w0for}), we find that
\begin{eqnarray}
\label{wu0}
-\int_{\Omega} \div(w_0\eta) u_0 dx&=& -2\int_{\Gamma} w_0(x) \eta (x)\cdot {\bf n}(x)  d \mathcal{H}^{N-1} (x)\nonumber \\
&=&
4\int_{\Gamma}\int_{\Gamma} G(x, y) (\eta(x)\cdot {\bf n}(x))(\eta(y)\cdot {\bf n}(y))   d \mathcal{H}^{N-1} (x) d \mathcal{H}^{N-1}(y).
\end{eqnarray}
Combining (\ref{GtwoU}), (\ref{wu_lim}) and (\ref{wu0}), we obtain the limit as asserted in Claim 2. This completes the proof of Claim 2 and also the proof of our theorem.
\end{proof}

\section{Applications of Second Variation Convergence for Ohta-Kawasaki}
\label{OK_sec}
We now wish to analyze the asymptotic behavior of the inner first and second variations of the Ohta-Kawasaki functional
\begin{equation} \mathcal{E}_{\e,\gamma}(u)= E_{\varepsilon}(u) + \frac{4}{3}\gamma B(u)=\int_{\Omega}\left(\frac{\varepsilon \abs{\nabla u}^2}{2} +\frac{(1-u^2)^2}{2\varepsilon}\right) dx + \frac{4}{3}\gamma\int_{\Omega}|\nabla v|^2 dx,\label{OKa}\end{equation}
a model for microphase separation in diblock copolymers; see \cite{OK}. 
Here $\e>0$ and $\gamma\geq 0$, $u: \Omega\rightarrow \RR$ and we are using the same notation for $B$ as in (\ref{B_def}) so that $v$ is required to satisfy \eqref{Poisson}.  The factor of $\frac{4}{3}$ is simply put in for convenience in stating the $\Gamma$-convergence result. These functionals are known to $\Gamma$-converge in $L^1(\Omega)$ to $\frac{4}{3}\mathcal{E}_{\gamma}$
 where
\begin{equation}
\mathcal{E}_{\gamma}(u_0):= E(u_0) +\gamma\, B(u_0)\label{Egamma},
\end{equation}
(see \cite{RW}) where we recall that  
\begin{equation*}
E(u_0)=\left\{
 \begin{alignedat}{1}
   \frac{1}{2}\int_{\Omega}|\nabla u_0| ~&~ \text{if} ~u_0\in BV (\Omega, \{1, -1\}), \\\
\infty~&~ \text{otherwise}.
 \end{alignedat} 
  \right.
  \end{equation*}
  As in Section \ref{AC_sec}, 
  if the interface $\Gamma:= \partial\{x\in \Omega: u_0(x)=1\}\cap \Omega$ separating 
the $\pm1$ phases of $u_0\in BV (\Omega, \{1, -1\})$ is sufficiently regular, say $C^1$, then  we also identify $$E(u_0)\equiv E(\Gamma)=\mathcal{H}^{N-1}(\Gamma)$$
and
\begin{equation}
\label{Egamma2}
\mathcal{E}_{\gamma}(u_0) \equiv \mathcal{E}_{\gamma}(\Gamma)=  E(\Gamma) + \gamma B(u_0) = \mathcal{H}^{N-1}(\Gamma) + \gamma B(u_0).
\end{equation}
  
  Competitors $u:\Omega\to\R$ in the Ohta-Kawasaki functional are generally required to satisfy a mass constraint 
  \begin{equation}
  \label{mass}
  \frac{1}{|\Omega|}\int_{\Omega}u\,dx=m\quad\mbox{for some constant}\; m\in (-1,1).
  \end{equation}
  We should mention that all of the analysis of this section applies, in particular, to the special case where $\gamma=0$, that is to the case of the mass-constrained Allen-Cahn or Modica-Mortola functionals. Under such a constraint this context is perhaps better known as the equilibrium setting for the Cahn-Hilliard problem.
  
We first establish the following theorem which is the nonlocal Ohta-Kawasaki analogue of Theorem \ref{thm-AC2}. It allows us to pass the the limit the first and second inner variations of the Ohta-Kawasaki functionals, without imposing any criticality conditions.
\begin{thm}[Limits of inner variations of the Ohta-Kawasaki functional]
\label{SIV_OK}
Let $ \mathcal{E}_{\e,\gamma}$ and $ \mathcal{E}_{\gamma}$ be as in (\ref{OKa}) and (\ref{Egamma2}) respectively. Let $G$ be defined as in (\ref{G_def}).
Let $\{u_{\e}\}\subset C^3 (\overline{\Omega})$ be a sequence of functions that converges in $L^2(\Omega)$ to a function $u_0\in BV (\Omega, \{1, -1\})$ with an interface $\Gamma=\partial\{u_0=1\}\cap\Omega$ having the property that $\overline{\Gamma}$ is $C^2$.  Assume that
 $\lim_{\varepsilon \rightarrow 0} \mathcal{E}_{\e,\gamma}(u_{\varepsilon}) = 
\frac{4}{3}\mathcal{E}_\gamma(\Gamma).$ Let $v_0(x) := \int_{\Omega} G(x, y) u_0(y) dy.$
Then, for all smooth vector fields $\eta\in (C^{3}(\overline{\Omega}))^{N}$ with $\eta\cdot \nu=0$ on $\partial\Omega$, we 
have
\begin{equation}
\label{1vm_lim}
\lim_{\varepsilon\rightarrow 0}\delta\mathcal{E}_{\e,\gamma}(u_{\varepsilon}, \eta) = \frac{4}{3}\left(\delta E(\Gamma,\eta)+ 4 \gamma\int_{\Gamma}  v_0 (\eta\cdot {\bf n}) d \mathcal{H}^{N-1} \right)
\end{equation} 
and for such $\eta$ and for $\zeta\in (C^{3}(\overline{\Omega}))^{N}$ with $\zeta\cdot \nu=Z\cdot\nu$ on $\partial\Omega$ where  $Z=(\eta\cdot\nabla)\eta$,
we 
have
\begin{multline}
\label{2vm_lim}
\lim_{\varepsilon\rightarrow 0}\frac{3}{4}\delta^{2} \mathcal{E}_{\e,\gamma}(u_{\varepsilon}, \eta, \zeta) = \delta^2 E(\Gamma,\eta, Z) + \int_{\Gamma} ({\bf n},{\bf n}\cdot\nabla\eta)^2 d\mathcal{H}^{N-1} +  \int_{\Gamma} \div^{\Gamma} (\zeta-Z) d\mathcal{H}^{N-1}\\ +  8\gamma\int_{\Gamma}\int_{\Gamma} G(x, y) (\eta(x)\cdot {\bf n}(x))(\eta(y)\cdot {\bf n}(y))  d \mathcal{H}^{N-1} (x) d \mathcal{H}^{N-1}(y) \\+ 4 \gamma\int_{\Gamma} (\nabla v_0\cdot\eta) (\eta\cdot {\bf n}) d \mathcal{H}^{N-1} 
 + 4 \gamma\int_{\Gamma} v_0 (\zeta-Z + (\div \eta)\eta)\cdot {\bf n} d \mathcal{H}^{N-1}.
\end{multline} 
\end{thm}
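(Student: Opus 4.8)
The plan is to use the additive splitting $\mathcal{E}_{\e,\gamma}=E_\e+\tfrac{4}{3}\gamma B$ together with the fact that, for the diffeomorphism $\Phi_t(x)=x+t\eta(x)+\tfrac{t^2}{2}\zeta(x)$, the maps $A\mapsto\tfrac{d}{dt}\big|_{t=0}A(u_\e\circ\Phi_t^{-1})$ and $A\mapsto\tfrac{d^2}{dt^2}\big|_{t=0}A(u_\e\circ\Phi_t^{-1})$ are linear in $A$; thus $\delta\mathcal{E}_{\e,\gamma}(u_\e,\eta)=\delta E_\e(u_\e,\eta)+\tfrac{4}{3}\gamma\,\delta B(u_\e,\eta)$ and $\delta^2\mathcal{E}_{\e,\gamma}(u_\e,\eta,\zeta)=\delta^2 E_\e(u_\e,\eta,\zeta)+\tfrac{4}{3}\gamma\,\delta^2 B(u_\e,\eta,\zeta)$, and the theorem should reduce to combining Theorems \ref{thm-AC2} and \ref{B_thm}. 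The first step is to verify that the energy hypothesis descends to the local part: since $u_\e\to u_0$ in $L^2(\Omega)$, the global $W^{2,2}$ estimates for the Poisson problem \eqref{Poisson} give $v_\e\to v_0$ in $W^{2,2}(\Omega)$, hence $B(u_\e)=\int_\Omega|\nabla v_\e|^2\,dx\to\int_\Omega|\nabla v_0|^2\,dx=B(u_0)$; subtracting $\tfrac43\gamma B(u_\e)$ from $\mathcal{E}_{\e,\gamma}(u_\e)$ and letting $\e\to0$ yields $\lim_{\e\to0}E_\e(u_\e)=\tfrac43E(\Gamma)$, so the hypothesis of Theorem \ref{thm-AC2} is met, whereas Theorem \ref{B_thm} needs only the $L^2$ convergence.

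For the first inner variation I would directly invoke the first-variation statement of Theorem \ref{thm-AC2}, which gives $\delta E_\e(u_\e,\eta)\to\tfrac43\delta E(\Gamma,\eta)$, and \eqref{oneB_lim} of Theorem \ref{B_thm}, which gives $\delta B(u_\e,\eta)\to 4\int_\Gamma v_0(\eta\cdot{\bf n})\,d\mathcal{H}^{N-1}$ (the value of $\delta B(u_\e,\eta)$ does not involve $\zeta$, so any admissible choice of $\zeta$ may be used in that theorem); substituting into the splitting and factoring out $\tfrac43$ produces exactly \eqref{1vm_lim}.

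For the second inner variation there is one bookkeeping subtlety: Theorem \ref{thm-AC2} is stated only for the acceleration field $Z:=(\eta\cdot\nabla)\eta$, while here $\zeta$ is merely required to satisfy $\zeta\cdot\nu=Z\cdot\nu$ on $\partial\Omega$. By Corollary \ref{inner_rem} the dependence of $\delta^2 E_\e(u_\e,\eta,\zeta)$ on $\zeta$ enters only through the term $dE_\e(u_\e,X_0)$ in \eqref{secondgen}, via the summand $(\nabla u_\e,-\zeta)$ of $X_0$; comparing the value at a general $\zeta$ with the value at $\zeta=Z$ and using \eqref{first} gives
\[
\delta^2 E_\e(u_\e,\eta,\zeta)=\delta^2 E_\e(u_\e,\eta,Z)+\delta E_\e(u_\e,\zeta-Z),
\]
where $\delta^2 E_\e(u_\e,\eta,Z)$, computed from Lemma \ref{SIV_direct}, agrees with the quantity appearing in Theorem \ref{thm-AC2} by Remark \ref{weird}. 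Since $(\zeta-Z)\cdot\nu=0$ on $\partial\Omega$, the first-variation part of Theorem \ref{thm-AC2} gives $\delta E_\e(u_\e,\zeta-Z)\to\tfrac43\delta E(\Gamma,\zeta-Z)=\tfrac43\int_\Gamma\text{div}^{\Gamma}(\zeta-Z)\,d\mathcal{H}^{N-1}$, and its second-variation part supplies $\lim_{\e\to0}\delta^2 E_\e(u_\e,\eta,Z)=\tfrac43\big(\delta^2 E(\Gamma,\eta,Z)+\int_\Gamma({\bf n},{\bf n}\cdot\nabla\eta)^2\,d\mathcal{H}^{N-1}\big)$. Adding $\tfrac43\gamma$ times the limit of $\delta^2 B(u_\e,\eta,\zeta)$ provided by \eqref{twoB_lim}, then multiplying through by $\tfrac34$ and regrouping, should match the right-hand side of \eqref{2vm_lim} term by term.

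The only genuinely non-routine point I anticipate is the $\zeta$-versus-$Z$ reduction above — isolating precisely how the acceleration field enters the second inner variation and recognizing the residual as a first inner variation of the tangential field $\zeta-Z$. Everything else is direct substitution into the two convergence theorems already established, so no new analysis is required (in particular, no further appeal to Reshetnyak's theorem, nor to the Calderon-Zygmund singular-integral estimates and Young's convolution inequality used in the proof of Theorem \ref{B_thm}).
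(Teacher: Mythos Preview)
Your proposal is correct and follows essentially the same approach as the paper: the additive splitting into $E_\e$ and $\tfrac{4}{3}\gamma B$, the verification that $B(u_\e)\to B(u_0)$ so the hypothesis of Theorem~\ref{thm-AC2} is inherited, and in particular the $\zeta$-versus-$Z$ reduction via \eqref{secondgen} and \eqref{first} to obtain $\delta^2 E_\e(u_\e,\eta,\zeta)=\delta^2 E_\e(u_\e,\eta,Z)+\delta E_\e(u_\e,\zeta-Z)$, followed by an appeal to the first-variation part of Theorem~\ref{thm-AC2} using $(\zeta-Z)\cdot\nu=0$. The paper carries out exactly this decomposition in \eqref{E_split}.
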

\begin{proof}
Let $B(u)$ be defined as in (\ref{B_def}).
First, note that from (\ref{OKa}), (\ref{Egamma}) and $\lim_{\varepsilon \rightarrow 0} \mathcal{E}_{\e,\gamma}(u_{\varepsilon}) = 
\frac{4}{3}\mathcal{E}_\gamma(\Gamma),$ we also have
$$\lim_{\varepsilon \rightarrow 0} E_{\e}(u_{\varepsilon}) = 
\frac{4}{3}E(\Gamma),$$
since  the $L^2(\Omega)$-convergence of $\{u_\e\}$ to $u_0$ implies that $B(u_\e)\rightarrow B(u_0).$ This means that all conditions of Theorems \ref{thm-AC2} and \ref{B_thm}
are satisfied and we can apply their results to the proof of our theorem.
 
Next, observe that
\begin{equation*}
\delta \mathcal{E}_{\e,\gamma}(u_{\varepsilon}, \eta) = \delta E_{\e}(u_{\varepsilon}, \eta) + \frac{4}{3}\gamma\,\delta  B(u_{\varepsilon}, \eta).
\end{equation*}
Therefore, (\ref{1vm_lim}) follows from Theorems \ref{thm-AC2} and \ref{B_thm}.

Turning to the proof of (\ref{2vm_lim}), we have from the definition of $\mathcal{E}_{\e,\gamma}$ in (\ref{OKa}) that
\begin{equation*}
\frac{3}{4}\delta^{2} \mathcal{E}_{\e,\gamma}(u_{\varepsilon}, \eta, \zeta) = \frac{3}{4}\delta^{2} E_{\e}(u_{\varepsilon}, \eta, \zeta) + \gamma\,\delta^{2} B(u_{\varepsilon}, \eta, \zeta).
\end{equation*}
We now apply \eqref{secondgen} to $E_{\e}$ at $u_\e$, first with $X_0$ given by (\ref{Xzero}) with $\zeta$ itself and then with $\zeta=Z$ and subtract to find that
\begin{eqnarray*}
\delta^{2} E_{\e}(u_{\varepsilon}, \eta, \zeta) &=& \delta^{2} E_{\e}(u_{\varepsilon}, \eta, Z) + dE_{\e}(u_\e, \nabla u_\e \cdot (Z-\zeta))\\ &=&  \delta^{2} E_{\e}(u_{\varepsilon}, \eta, Z) + \delta E_{\e}(u_\e, \zeta-Z).
\end{eqnarray*}
In the last equation, we have used (\ref{first}) relating the first Gateaux variation and the first inner variation.
It follows that
\begin{equation}
\label{E_split}
\frac{3}{4}\delta^{2} \mathcal{E}_{\e,\gamma}(u_{\varepsilon}, \eta, \zeta) = \frac{3}{4}\left( \delta^{2} E_{\e}(u_{\varepsilon}, \eta, Z) + \delta E_{\e}(u_\e, \zeta-Z)\right) +\gamma\delta^{2} B(u_{\varepsilon}, \eta, \zeta)
\end{equation}
Letting $\e\rightarrow 0$ in $\delta E_{\e}(u_\e, \zeta-Z)$, we find from Theorem \ref{thm-AC2} and \eqref{FVE} that
\begin{equation*}
\lim_{\e\rightarrow 0}  \frac{3}{4}\delta E_{\e}(u_\e, \zeta-Z) = \delta E(\Gamma,\zeta-Z)= \int_{\Gamma} \div^{\Gamma} (\zeta-Z) d\mathcal{H}^{N-1}.
\end{equation*}
Letting $\e\rightarrow 0$ in (\ref{E_split}), using the above limit together with Theorems \ref{thm-AC2} and \ref{B_thm}, we obtain (\ref{2vm_lim}).
\end{proof}

Next we wish to apply Theorem \ref{SIV_OK} to the case of stable critical points of the Ohta-Kawasaki functional  $\mathcal{E}_{\e,\gamma}$ subject to a mass constraint
which is the context of Theorem \ref{OK2}.
 To be clear, we refer to a function $u:\Omega\to\R$ as a critical point of $\mathcal{E}_{\e,\gamma}$ subject to a mass constraint if
 $d\mathcal{E}_{\e,\gamma}(u,\phi)=0$ whenever $\int_\Omega\phi(y)\,dy=0$, and we say $u$ is a stable critical point of the Ohta-Kawasaki functional $ \mathcal{E}_{\e,\gamma}$ if additionally 
$d^2\mathcal{E}_{\e,\gamma}(u,\phi)\geq 0$ for such functions $\phi.$

Before proving Theorem \ref{OK2}, we would like to explain the peculiar choices of the velocity and acceleration vector fields $\eta$ and $\zeta$ stated in the theorem. Their choices were explained in \cite[Theorem 1.4]{Le2}. For reader's convenience, we repeat the argument here in the following remark.
\begin{rem}
\label{W_rem}
The choice of the velocity and acceleration vector fields $\eta$ and $\zeta$ in
$$\Phi_t(x)= x+ t\eta(x) +\frac{t^2}{2}\zeta(x)$$
in applications to the inner variations of the mass-constrained Ohta-Kawasaki functional is motivated by the fact that we wish the family 
$\Phi_t(E_0)$ of deformations of $E_0:=\{x\in\Omega: u_0(x)=1\}$ 
to preserve the volume of $E_0$ up to the second order in $t$, that is, 
\begin{equation}
\label{vol_Et}
|\Phi_t(E_0)|= |E_0| + o(t^2).
\end{equation}
 For $t$ sufficiently small, we have as in (2.16),
\begin{multline*}
\abs{\det\nabla\Phi_{t}(x)}=\det\nabla\Phi_{t}(x) =\det (I + t\nabla \eta (x) +\frac{t^2}{2}\nabla \zeta)\\= 1 +  t\div \eta + \frac{t^2}{2}[ \div\zeta + (\div\eta)^2 - \text{trace}((\nabla\eta)^2)] + O(t^3).
\end{multline*}
It follows that, for small $t$, we have
$$|\Phi_t(E_0)| = \int_{E_0} \abs{\det\nabla\Phi_{t}(x)}dx= \int_{E_0} \{1 +  t\div \eta + \frac{t^2}{2}[ \div\zeta + (\div\eta)^2 - \text{trace}\,((\nabla\eta)^2)] + O(t^3)\} \,dx .$$
The requirement (\ref{vol_Et}) is reduced to a set of two equations:
\begin{equation}
\label{vol_Et2}
\int_{E_0} \text{div}\, \eta~ dx =0,~\text{and}~\int_{E_0} [ \text{div}\,\zeta + (\text{div}\,\eta)^2 - \text{trace}\,((\nabla\eta)^2)]\,dx=0.
\end{equation}
Note that 
$$ (\div\,\eta)^2 - \text{trace}\,((\nabla\eta)^2) = \div \left((\div\,\eta)\eta - (\eta\cdot\nabla)\eta\right).$$
Thus, for any $\eta$, we can choose
$\zeta = W:= - (div\eta)\eta + (\eta\cdot\nabla)\eta$
so that the second equation in (\ref{vol_Et2}) holds. The issue is now reduced to the first equation in (\ref{vol_Et2}).
 However, when $\int_{\Gamma}\eta\cdot{\bf n}d\mathcal{H}^{n-1}=0$ and $\eta\cdot\nu=0$ on $\p\Omega$, an application of the divergence theorem shows that the first equation is also satisfied. 
\end{rem}
We can now present the proof of Theorem \ref{OK2} from the introduction.
\begin{proof}[Proof of Theorem \ref{OK2}]

Consider smooth vector fields $\eta\in (C^{3}(\overline{\Omega}))^{N}$ 
 satisfying 
 \begin{equation}
 \int_\Gamma \eta \cdot {\bf n}d\mathcal{H}^{N-1}(x)=0\quad\mbox{and}\quad \eta\cdot \nu=0\;\mbox{on}\; \partial\Omega.\label{gdeta}
 \end{equation}
 As explained in Remark \ref{W_rem}, (\ref{gdeta}) guarantees the preservation of mass up to the first order for the limit problem if we deform the set
 $E_0:=\{x\in\Omega: u_0(x)=1\}$ using $\Phi_t(E_0)$ where $\Phi_t(x) = x + t\eta (x) + O(t^2).$ 
 Furthermore, with (\ref{gdeta}) in hand, we can choose the acceleration vector field $\zeta:=W= (\eta\cdot\nabla )\eta -(\div\eta)\eta$ so that 
 if we deform the set
 $E_0$ using $\Phi_t(E_0)$ where $\Phi_t(x) = x + t\eta (x) + \frac{t^2}{2}\zeta (x) + O(t^3),$ the mass is preserved up to second order.

 Now, we ``lift'' all these to the $\e$-level.
 
Our first task will be to create a perturbation of $u_\e$  in the form of 
\begin{equation}
\label{ue_mass}
u_{\e, t}(y)=u_\e(\Phi_{\e, t}^{-1}(y))
\end{equation}
that preserves the mass constraint \eqref{mass} to second order
for a suitable deformation map 
$$\Phi_{\e, t}(y)= y + t\eta^{\e}(y) + \frac{t^2}{2} \zeta^{\e}(y) + O(t^3).$$
To this end,
first we construct $C^3(\overline{\Omega})$ perturbations $\eta^{\e}$ of $\eta$ such that $\eta^\e\cdot\nu=0$ on $\p\Omega$, and
\begin{equation}
\label{etae}
\lim_{\e\rightarrow 0}\|\eta^{\e}-\eta\|_{C^{3}(\overline{\Omega})}=0,\quad\int_{\Omega} u_{\e}\div \eta^{\e}dx=0.
\end{equation}
 In light of \eqref{ue_mass} and \eqref{ut_expand} with $\eta$ replaced by $\eta^\e$, the integral condition in \eqref{etae} will guarantee that to first order, mass is conserved since
\begin{equation}
\left.\frac{d}{dt}\right\rvert_{t=0} \int_{\Omega}u_{\e, t}(y)\,dy=-\int_{\Omega}\nabla u_\e\cdot \eta^\e\,dy=0.\label{zeromean}
\end{equation}

Here is a simple way to construct $\eta^{\e}$. Choose any smooth vector field $\beta\in (C^{3}(\overline{\Omega}))^{N}$ satisfying $\beta\cdot\nu=0$ on $\p\Omega$ and
$\int_{\Gamma}\beta\cdot {\bf n} d\mathcal{H}^{N-1}(x)\neq 0.$
Let
$$h(\e):=\frac{-\int_{\Omega} u_{\e}\div \eta \,dx}{\int_{\Omega} u_{\e}\div \beta \,dx}~\text{and}~\eta^{\e}= \eta (x) + h(\e) \beta(x).$$
Then, the second equation in (\ref{etae})
 is satisfied.  Let
$E_0=\{x\in \Omega: u_0(x)=1\}.$ Then, as $\e\rightarrow 0,$ we have
$$h(\e)\rightarrow \frac{-2\int_{E_0}\div\,\eta\, dx}{2\int_{E_0} \div\,\beta \,dx}= \frac{-2\int_{\Gamma}\eta\cdot {\bf n}\, d\mathcal{H}^{N-1}(x)}{2\int_{\Gamma}\varphi\cdot {\bf n}\,d\mathcal{H}^{N-1}(x)}=0.$$
Therefore, the first equation in (\ref{etae}) is also satisfied. 

With \eqref{zeromean} in hand,  the function  $-\nabla u_\e \cdot\eta^\e$ is admissible in computing the first and second Gateaux variations of $\mathcal{E}_{\e,\gamma}$ with respect to the mass constraint \eqref{mass}. We will first investigate the $\e\rightarrow 0$ limit of the criticality condition
$d \mathcal{E}_{\e,\gamma} (u_{\e}, -\nabla u^{\e}\cdot\eta^{\e})=0$ 

(i) Using the convergence of $\eta^\e$ to $\eta$ given in \eqref{etae}, along with the uniform boundedness of 
$\mathcal{E}_{\e,\gamma}(u_{\e})$, a glance at the explicit formulae for 
$\delta \mathcal{E}_{\e,\gamma} (u_{\e}, \eta^{\e})= \delta E_{\e}(u_\e,\eta^\e) + \frac{4}{3}\gamma \delta B(u_\e,\eta^\e)$ given in \eqref{star} and \eqref{Bone} easily leads to the conclusion that
\begin{equation*}
\lim_{\e\rightarrow 0}\delta \mathcal{E}_{\e,\gamma} (u_{\e}, \eta^{\e})= \lim_{\e\rightarrow 0}\delta \mathcal{E}_{\e,\gamma} (u_{\e}, \eta)
=\frac{4}{3}\left(\delta E(\Gamma,\eta )+ 4 \gamma\int_{\Gamma}  v_0 (\eta\cdot {\bf n}) d \mathcal{H}^{N-1} (x)\right),
\end{equation*}
where the last equality comes from \eqref{1vm_lim} of Theorem \ref{SIV_OK}.
Using (\ref{first}) and  (\ref{Bfirst}), we have
\begin{equation*}
d \mathcal{E}_{\e,\gamma} (u_{\e}, -\nabla u^{\e}\cdot\eta^{\e})= \delta \mathcal{E}_{\e,\gamma} (u_{\e}, \eta^{\e}).
\end{equation*}
Combining the above equations with $d \mathcal{E}_{\e,\gamma} (u_{\e}, -\nabla u^{\e}\cdot\eta^{\e})=0$, we get
\begin{equation*}
\delta E(\Gamma,\eta )+ 4 \gamma\int_{\Gamma}  v_0 (\eta\cdot {\bf n})\, d \mathcal{H}^{N-1} (x)=0.
\end{equation*}
Invoking \eqref{FVE} we find that 
\begin{equation}
\int_{\Gamma} (\div^{\Gamma}\eta + 4 \gamma  v_0 (\eta\cdot {\bf n})) \,d \mathcal{H}^{N-1} (x)=0.
\end{equation}
By decomposing $\eta= \eta^{\perp} +\eta^{T}$ where $\eta^{\perp}= (\eta\cdot{\bf n}){\bf n}$, we have
\[
0=\int_{\Gamma}((n-1) H + 4 \gamma v_0)(\eta\cdot{\bf n})\,d \mathcal{H}^{N-1} + \int_{\p\Gamma\cap\p\Omega} \eta^{T}\cdot {\bf n}^{\ast} d \mathcal{H}^{N-2}.
\]
Here we have used the Divergence Theorem to evaluate $\int_{\Gamma} \div^{\Gamma}\eta $ as in (\ref{Hzero}), and ${\bf n}^{\ast}$ denotes the co-normal vector orthogonal to $\partial\Omega\cap\partial\Gamma.$
Since this relation holds for all $\eta$ satsifying \eqref{gdeta}, it follows that  there is a constant $\lambda$ such that $(n-1) H + 4 \gamma v_0 =\lambda$ on $\Gamma$ and $\p\Gamma$ must meet $\p\Omega$ orthogonally, if at all. (See \cite[p. 70]{SZ2} for more details.) Thus, (i) is established.

(ii) Turning to the proof of (ii) we introduce
\[
W:= (\eta \cdot\nabla) \eta-(\div \eta)\eta\quad\mbox{and}\quad W^\e:=(\eta^\e \cdot\nabla) \eta^\e-(\div \eta^\e)\eta^\e.
\]
In light of the $C^3$ convergence of $\eta^\e$ to $\eta$ we note that 
\[
\lim_{\e\rightarrow 0}\|W^{\e}-W\|_{C^{2}(\overline{\Omega})}=0.
\]
Consequently, the uniform energy bound on $\mathcal{E}_{\e,\gamma}(u_{\e})$ and the explicit formulae for
$\delta^2 \mathcal{E}_{\e,\gamma} (u_{\e}, \eta^{\e}, W^{\e})=  \delta^2 E_{\e}(u_\e,\eta^\e) + \frac{4}{3}\gamma \delta^2 B(u_\e,\eta^\e)$ given in \eqref{svep-p} and \eqref{Btwo} imply that
\begin{equation}\lim_{\e\rightarrow 0}\delta^2 \mathcal{E}_{\e,\gamma} (u_{\e}, \eta^{\e}, W^{\e})= \lim_{\e\rightarrow 0}\delta^2 \mathcal{E}_{\e,\gamma} (u_{\e}, \eta, W).
\label{2svme}
\end{equation}
Now using the relation between the Gateaux and inner second variation of $E_\e$ and $B$ provided by Corollaries \ref{inner_rem} and \ref{B_cor}, we obtain
\begin{equation}d^2 \mathcal{E}_{\e,\gamma} (u_{\e}, -\nabla u_{\e}\cdot\eta^{\e})=\delta^2 \mathcal{E}_{\e,\gamma}(u_{\e}, \eta^{\e}, W^{\e})-d\mathcal{E}_{\e,\gamma}(u_{\e}, X_{\e})
\label{2svm}
\end{equation}
where 
\[
X_{\e}= (D^2 u_{\e}(y) \cdot \eta^{\e}(y), \eta^{\e} (y)) + (\nabla u_{\e}(y), (\eta^{\e}\cdot\nabla ) \eta^{\e} (y) + \div (\eta^{\e}) \eta^{\e})= \div ((\nabla u_{\e}\cdot \eta^{\e})\eta^{\e}).
\]
But since $\eta^\e\cdot\nu=0$ on $\p\Omega$, the divergence theorem implies that $\int_\Omega X_\e\,dx=0$ and so by the criticality of 
$u_\e$ we have $d\mathcal{E}_{\e,\gamma}(u_{\e}, X_{\e})=0$. The fact that the integral of $X_\e$ vanishes is no coincidence. It is precisely related to the fact that our choice of $W$ and of $W^\e$ preserve mass to second order.
The first order preservation was already guaranteed by  \eqref{zeromean}. For the second order preservation, we note that with $u_{\e, t}$ defined by (\ref{ue_mass}), we can use (\ref{ut_expand}) with 
 with $X_0$ replaced by $X_\e$ to get
$$
\left.\frac{d^2}{dt^2}\right\rvert_{t=0} \int_{\Omega}u_{\e, t}(y)\,dy=\int_{\Omega}X_\e(y)\,dy =0.$$

At this point we further restrict $\eta$ to additionally satisfy
\[
\eta=\xi {\bf n} \quad\mbox{and}\quad ({\bf n},{\bf n}\cdot\nabla\eta) =0\quad \mbox{on} \;\Gamma
\]
for any smooth function $\xi:\overline{\Omega}\rightarrow\RR$ satisfying
$$
\int_{\Gamma} \xi(x) d\mathcal{H}^{N-1}(x)=0.$$
From (\ref{2svme}) and (\ref{2svm})  together with Theorems  \ref{PI_ineq} and \ref{SIV_OK}, noting that $W-Z=-(\div \eta)\eta$, we obtain 
\begin{multline}\frac{3}{4}\lim_{\e\rightarrow 0}d^2 \mathcal{E}_{\e,\gamma} (u_{\e},  -\nabla u_{\e}\cdot\eta^{\e}) = \frac{3}{4}\lim_{\e\rightarrow 0}\delta^2 \mathcal{E}_{\e,\gamma} (u_{\e}, \eta, W)=
\delta^2 E(\Gamma,\eta, Z)\\- \int_{\Gamma} \div^{\Gamma} ((\div\eta)\eta) d\mathcal{H}^{N-1}+  8\gamma\int_{\Gamma}\int_{\Gamma} G(x, y) (\eta(x)\cdot {\bf n}(x))(\eta(y)\cdot {\bf n}(y))  d \mathcal{H}^{N-1} (x) d \mathcal{H}^{N-1}(y) \\+ 4 \gamma\int_{\Gamma} (\nabla v_0\cdot\eta) (\eta\cdot {\bf n}) d \mathcal{H}^{N-1} 
\\= \int_{\Gamma} \left(|\nabla_{\Gamma}\xi|^2 + (N-1)^2H^2\xi^2 -|A_{\Gamma}|^2|\xi|^2\right) d\mathcal{H}^{N-1} - \int_{\partial\Gamma\cap\partial\Omega} A_{\partial\Omega}({\bf n}, {\bf n})|\xi|^2 d\mathcal{H}^{N-2}
\\- \int_{\Gamma} \div^{\Gamma} ((\div\eta)\eta) d\mathcal{H}^{N-1}+  8\gamma\int_{\Gamma}\int_{\Gamma} G(x, y) \xi(x) \xi(y) d \mathcal{H}^{N-1} (x) d \mathcal{H}^{N-1}(y) + 4 \gamma\int_{\Gamma} (\nabla v_0\cdot {\bf n}) \xi^2d \mathcal{H}^{N-1}\\= \delta^2 \mathcal{E}_{\gamma} (\Gamma,\xi) +\int_{\Gamma} \left[ (N-1)^2H^2\xi^2-\div^{\Gamma} ((\div\eta)\eta)\right] d\mathcal{H}^{N-1}(x) .
\label{2svmlim}
\end{multline}
Using  $({\bf n},{\bf n}\cdot\nabla\eta) =0$  on $\Gamma$, we find that
$\div \eta = \div^\Gamma \eta =(N-1)H \xi$ on $\Gamma$.
Thus, on $\Gamma$ we have
$$\div^{\Gamma}  ((\div\eta)\eta) = \div^{\Gamma} ((N-1)H \xi^2 {\bf n})= (N-1)^2 H^2\xi^2.$$
Therefore, we get from (\ref{2svmlim}) the following limit
\begin{equation}
\label{2svmlim2}
\frac{3}{4}\lim_{\e\rightarrow 0}d^2 \mathcal{E} (u_{\e},  -\nabla u_{\e}\cdot\eta^{\e}) = \frac{3}{4}\lim_{\e\rightarrow 0}\delta^2 \mathcal{E}_{\e,\gamma} (u_{\e}, \eta, W)
=\delta^2 \mathcal{E}_{\gamma} (\Gamma,\xi).
\end{equation}
The proof of (\ref{secst}) is complete.

(iii) From (i), we know that $\p\Gamma$ must meet $\p\Omega$ orthogonally, if at all.  Thus, for any smooth function $\xi:\overline{\Omega}\to\R$ satisfying $\int_{\Gamma} \xi(x) d\mathcal{H}^{N-1}(x)=0$, we can choose 
a smooth vector field $\eta$ on $\overline{\Omega}$ such that $\eta=\xi{\bf n}$ on $\Gamma$, $\eta\cdot \nu=0$ on $\partial\Omega$ and such that $({\bf n}, {\bf n}\cdot\nabla \eta)=0$ on $\Gamma$. Let $\eta^\e$ be as in the proofs of (i) and (ii). Then, 
the stability inequality $\delta^2 \mathcal{E}_{\gamma} (\Gamma,\xi)\geq 0$ follows immediately from the limit (\ref{2svmlim2}) above, since $d^2 \mathcal{E}_{\e,\gamma} (u_{\e},  -\nabla u_{\e}\cdot\eta^{\e})\geq 0$ by the stability of $u_\e$. 

(iv) The proof is similar to that of
 Theorem \ref{eigen_thm}.  The most crucial point in the proof of Theorem \ref{eigen_thm} is the identity (\ref{polarident}) between two quadratic forms $Q_\e$ and $Q$ associated with the two eigenvalue problems. Now, in our
 nonlocal context, we will also obtain a similar identity (\ref{polarident2}). 
 
 To do so, we first set up the corresponing quadratic forms for our two eigenvalue problems. 
 Let denote by $\mathcal{Q}_{\e,\gamma}(u_\e)$ the quadratic function associated to the operator $$-\e \Delta + 2\e^{-1}(3u_{\e}^2-1) +\frac{8}{3}\gamma (-\Delta)^{-1}$$ with zero Neumann boundary conditions, that is, for $\varphi\in C^{1}(\overline{\Omega})$, we have
\begin{eqnarray*}\mathcal{Q}_{\e,\gamma}(u_\e)(\varphi)&=&\int_{\Omega} \left(\e |\nabla \varphi|^2 +  \frac{1}{\e}(6u_\e^2-2)\varphi^2\right) dx +\frac{8}{3}\gamma\int_{\Omega}\int_{\Omega} G(x, y)\varphi(x)\varphi(y) dxdy\\ &\equiv& d^2 \mathcal{E}_{\e,\gamma}(u_{\e}, -\varphi).\end{eqnarray*}
Similarly, we can define a quadratic function $\mathcal{Q}_{\gamma}$ for the operator 
\[
-\Delta_{\Gamma} - |A_{\Gamma}|^2+ 8\gamma (-\Delta)^{-1}(\chi_{\Gamma}) + 4\gamma (\nabla v_0\cdot{\bf n})
\] on $\Gamma$ with a Robin condition on $\partial\Gamma \cap\partial\Omega$ for the corresponding eigenfunctions. 
That is, for $\varphi\in C^{1}(\overline{\Gamma})$, we define
\begin{multline*}\mathcal{Q}_{\gamma}(\varphi)= \int_{\Gamma} \left(\abs{\nabla^{\Gamma} \varphi}^2 - \abs{A}^2 \varphi^2\right) d\mathcal{H}^{N-1}-\int_{\partial\Gamma\cap\partial\Omega} A_{\partial\Omega}({\bf n}, {\bf n})|\varphi|^2 d\mathcal{H}^{N-2}\\+  8\gamma\int_{\Gamma}\int_{\Gamma} G(x, y) \varphi(x) \varphi(y) d \mathcal{H}^{N-1} (x) d \mathcal{H}^{N-1}(y) + 4 \gamma\int_{\Gamma} (\nabla v_0\cdot {\bf n}) \varphi^2d \mathcal{H}^{N-1}.
\end{multline*}
We can naturally extend $\mathcal{Q}_{\gamma}$ to be defined for vector fields in $\overline{\Omega}$ that are generated by functions defined on $\overline{\Gamma}$ as follows. Given $f\in C^{1}(\overline{\Gamma})$, let $\eta = f {\bf n}$ be a normal vector field defined on $\Gamma$. 
Assuming the smoothness of $\Gamma$, we know from (i) that $\partial\Gamma$ must meet $\partial\Omega$ orthogonally (if at all). Thus, we can find an extension $\tilde{\eta}$ of $\eta$ to $\overline{\Omega}$ such that it is 
tangent to $\partial\Omega$, that is $\tilde\eta\cdot\nu=0$ on $\p\Omega$, $({\bf n}, 
{\bf n}\cdot\nabla\tilde{\eta}) =0$ on $\Gamma$. Then, define $\mathcal{Q}_{\gamma}(\tilde{\eta}):= \mathcal{Q}_{\gamma}(f).$

For any vector field $V$ defined on $\overline{\Gamma}$ that is normal to $\Gamma$, we also denote by $V$ its extension to $\overline{\Omega}$
in such a way that  it is 
tangent to $\partial\Omega$, $({\bf n}, {\bf n}\cdot \nabla V) =0$ on $\Gamma$. Let $\xi=\xi_V= V\cdot {\bf n}. $\\
Note that, using the stationarity of $u_\e$ with respect to a mass constraint, and (\ref{2svm}), we have for  $$\zeta:=(V\cdot \nabla) V-(\div V) V,$$
the identity
$$\mathcal{Q}_{\e,\gamma}(\nabla u_{\e}\cdot V) = d^2 \mathcal{E}_{\e,\gamma}(u_{\e},- \nabla u_{\e}\cdot V)= \delta^2 \mathcal{E}_{\e,\gamma}(u_{\e}, V, \zeta).$$
Then, we have, by (ii)
\begin{eqnarray}\lim_{\e\rightarrow 0} \mathcal{Q}_{\e,\gamma}(\nabla u_{\e}\cdot V) = \lim_{\e\rightarrow 0}  \delta^2 \mathcal{E}_{\e,\gamma}(u_{\e}, V, \zeta)&=&\frac{4}{3}\delta^2\mathcal{E}_{\gamma}(\Gamma, \xi)\nonumber\\ 
 &=&   \frac{4}{3} \mathcal{Q}_{\gamma}(V).
\label{polarident2}
\end{eqnarray}
Now, arguing similarly as in the proof of Theorem \ref{eigen_thm} starting right after (\ref{polarident}), we obtain the desired result.

(v) The proof of this part is similar to that of (iv). In fact, it is simpler. We use the argument in (iv) for functions $f$ and vector fields $V$ compactly supported in $\Gamma$.
\end{proof}

{\bf Acknowledgements.} The authors would like to thank the anonymous referee for the careful reading of the paper together with his/her constructive comments.

{} 

\end{document}